\date{\today}
\newcommand{\fra}{\mathfrak{a}}
\newcommand{\frg}{\mathfrak{g}}
\newcommand{\frh}{\mathfrak{h}}
\newcommand{\frk}{\mathfrak{k}}
\newcommand{\frl}{\mathfrak{l}}
\newcommand{\frs}{\mathfrak{s}}
\newcommand{\frt}{\mathfrak{t}}
\newtheorem{theorem}{Theorem}[section]
\newtheorem{corollary}[theorem]{Corollary}
\newtheorem{example}[theorem]{Example}
\newtheorem{lemma}[theorem]{Lemma}
\newtheorem{proposition}[theorem]{Proposition}
\theoremstyle{definition}
\newtheorem{definition}[theorem]{Definition}
\newtheorem{remark}[theorem]{Remark}
\begin{document}

\setlength{\baselineskip}{1.2\baselineskip}
\title[Lefschetz Principle for $\mathrm{GL}(n,\mathbb{C})$ and $\mathrm{GL}(m,\mathbb{Q}_p)$]
{On the Lefschetz Principle for $\mathrm{GL}(n,\mathbb{C})$ and $\mathrm{GL}(m,\mathbb{Q}_p)$}
\author[Kei Yuen Chan]{Kei Yuen Chan}
\author[Kayue Daniel Wong]{Kayue Daniel Wong}
\address{Department of Mathematics, The Unviersity of Hong Kong}
\email{kychan1@hku.hk}
\address{School of Science and Engineering, The Chinese University of Hong Kong, Shenzhen, Guangdong 518172, China}
\email{kayue.wong@gmail.com}

\begin{abstract}
We construct an exact functor from the category of Harish-Chandra modules of $\mathrm{GL}_n(\mathbb C)$ to the category of finite-dimensional modules of a graded Hecke algebra of type A. We show that the functor preserves parabolically induced modules, standard modules, irreducible modules, unitary modules and Dirac series. We also use the functor to connect a Bernstein-Zelevinsky type functor for graded Hecke algebra side to the tensor product for $\mathrm{GL}_n(\mathbb C)$ side. As an application, we explain how to obtain some new instances of explicit quotient branching laws from our results.

\end{abstract}

\maketitle

\section{Introduction} \label{s intro}

The Harish-Chandra Lefschetz principle predicts that the representation theory for real, $p$-adic and automorphic ones should be put in equal footing. This principle has been pursued in various situations. For example, the spherical unitary dual problem has been exemplified by Barbasch in \cite{B10}, in which the correspondence between relevant $K$-types (real side) and relevant $W$-types ($p$-adic side) are established to match up actions of intertwining operators.

Another instance is the work of Ciubotaru-Trapa \cite{CT11, CT12}, which builds functorial connections between the categories for some real and $p$-adic groups, based on some Schur-Weyl type constructions of Arakawa-Suzuki \cite{AS98} as well as the work of  Oda \cite{Od07} and Etingof-Freund-Ma \cite{EFM08}. This allows one to transfer information between these two categories, including the unitarity of representations.

The main goal of this paper is to establish a new Schur-Weyl type duality between the category of Harish-Chandra modules of $\mathrm{GL}_n(\mathbb C)$ and the category of modules of graded Hecke algebra of type A. While the work is inspired by \cite{AS98, Od07, EFM08, CT11, CT12}, there are two important aspects in our work: 
\begin{itemize}
    \item Firstly, the construction for $\mathrm{GL}_n(\mathbb C)$ carries other subtleties such as finding a suitable choice of the Casimir element in constructing Hecke algebra actions and a suitable choice of a standard representation in defining our functor.
    \item Secondly, other than the unitary dual problem -- which is much known now from the work of Barbasch \cite{Ba89} for complex case (also see \cite{St67} and \cite{Vo86}) and Tadi\'c \cite{Ta86} for $p$-adic case, we also explore other problems of recent interest such as Dirac series \cite{DW}, parabolic inductions \cite{LM16} and Bernstein-Zelevinsky derivatives \cite{Ch22+, Ch22+d} (related to branching laws).
 \end{itemize}
 
We need more notations to explain our constructions and main results. Let $\mathcal{HC}_n$ be the category of Harish-Chandra modules of $\mathrm{GL}_n(\mathbb C)$. For the $p$-adic side, the classical result of Borel-Casselman \cite{Bo76} allows to reduce the study of the Iwahori component of $p$-adic groups to the module category of affine Hecke algebras. Lusztig \cite{Lu89} further shows that one can further reduce to study the module category of their infinitesimal version -- graded Hecke algebras. In our context, we need the graded Hecke algebra of type $A$, see Definition \ref{def gaha type a}. Let $\mathcal H_m$ be the category of $\mathbb H_m$-modules.

Let $V$ be the {\it conjugate} standard representation of $\mathrm{GL}_n(\mathbb C)$ (see Lemma \ref{lem std repn}). Let $K$ be the maximal compact subgroup in $\mathrm{GL}_n(\mathbb C)$. In Section \ref{sec-as}, we construct a functor $\Gamma_{n,m}: \mathcal{HC}_n \rightarrow \mathcal{H}_m$ whose underlying space takes the form:
\[   \Gamma_{n,m}(X) =\mathrm{Hom}_{K}(\mathrm{triv}, X\otimes V^{\otimes m}) .
\]
Such functor has several nice behaviours:

\begin{theorem} \label{thm functor preserve properties}
The functor $\Gamma_{n,m}$ satisfies the following properties:
\begin{itemize}
   \item (Theorem \ref{cor preserve parabolic induction}) $\Gamma_{n,m}$ preserves parabolic inductions.
       \item (c.f. Theorem \ref{thm-std}) $\Gamma_{n,m}$ sends a standard representation to a standard $\mathbb H_m$-module or zero;
    \item (c.f. Theorem \ref{thm irreducibility of functor}) $\Gamma_{n,m}$ sends an irreducible module to an irreducible $\mathbb H_m$-module or zero;
    \item (Theorem \ref{thm preserve hermitian structure}) $\Gamma_{n,m}$ preserves unitarity.
\end{itemize}
\end{theorem}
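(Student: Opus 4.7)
The plan is to verify the four bullet points in sequence, since preservation of parabolic induction is the structural backbone from which the remaining statements flow. My approach is first to establish compatibility of $\Gamma_{n,m}$ with parabolic induction on both sides, then leverage that to identify images of standard modules, then attack irreducibility (the hard part), and finally transfer Hermitian structures.

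For parabolic induction, let $P=MU$ be a standard parabolic of $\mathrm{GL}_n(\mathbb C)$ with Levi $M=\prod_{i}\mathrm{GL}_{n_i}(\mathbb C)$ and compact form $K_M=\prod_i U(n_i)$. Given $X_i\in\mathcal{HC}_{n_i}$, I would compute
\[
\Gamma_{n,m}\big(\mathrm{Ind}_P^{\mathrm{GL}_n}(X_1\boxtimes\cdots\boxtimes X_k)\big)
=\mathrm{Hom}_K\big(\mathrm{triv},\, \mathrm{Ind}_P^{\mathrm{GL}_n}(\boxtimes_i X_i)\otimes V^{\otimes m}\big)
\]
by pushing $V^{\otimes m}$ inside the induction (compact picture) and applying Frobenius reciprocity, reducing to a $K_M$-Hom. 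The key combinatorial input is the branching $V^{\otimes m}|_{K_M}\cong\bigoplus_{m_1+\cdots+m_k=m}\binom{m}{m_1,\dots,m_k}\bigl(V_1^{\otimes m_1}\boxtimes\cdots\boxtimes V_k^{\otimes m_k}\bigr)$ where $V_i$ is the conjugate standard representation of $\mathrm{GL}_{n_i}(\mathbb C)$. Matching the resulting direct sum over compositions of $m$ against the parabolic induction functor on the graded Hecke algebra side should produce $\Gamma_{n,m}(\mathrm{Ind})\cong \bigoplus_{\underline m}\mathrm{Ind}^{\mathbb H_m}\bigl(\Gamma_{n_1,m_1}(X_1)\boxtimes\cdots\boxtimes\Gamma_{n_k,m_k}(X_k)\bigr)$; the compatibility check that requires care is that the Casimir-type element generating the $\mathbb H_m$-action intertwines with the parabolic subalgebra embedding.

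For standard modules, I would use the Langlands presentation: a standard object of $\mathcal{HC}_n$ has the form $\mathrm{Ind}_P^{\mathrm{GL}_n}(\sigma\otimes\nu)$ for a tempered $\sigma$ on $M$ and a strictly dominant $\nu$, and analogously on the $\mathbb H_m$ side. By the step above it suffices to handle the building blocks; since complex groups have only principal-series-type tempered pieces built from unitary characters, the computation reduces to evaluating $\Gamma_{n,m}$ on characters of the diagonal torus, which is a direct check. The main obstacle is showing that $\Gamma_{n,m}$ preserves irreducibility (when nonzero). My plan is: (i) write an irreducible $X$ as the Langlands quotient of a standard $I(X)$; (ii) use exactness together with the previous item to obtain a surjection from the standard $\Gamma_{n,m}(I(X))$ onto $\Gamma_{n,m}(X)$; (iii) prove that the kernel is exactly the maximal proper submodule of $\Gamma_{n,m}(I(X))$, so that $\Gamma_{n,m}(X)$ is the Langlands quotient. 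Step (iii) is the crux and I would attack it via a Langlands-parameter/infinitesimal-character matching, together with the fact that both the real and Hecke-algebra Langlands quotients are characterized as the unique irreducible subquotients containing a distinguished lowest $K$-type / $W$-type; here the specific choice of the conjugate standard $V$ is precisely what forces the distinguished types to correspond under $\Gamma_{n,m}$.

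Finally, for unitarity, suppose $X$ carries a positive definite invariant Hermitian form $\langle\cdot,\cdot\rangle_X$. The standard representation $V$ carries its natural Hermitian inner product, inducing one on $V^{\otimes m}$ which is $K$-invariant; tensoring with $\langle\cdot,\cdot\rangle_X$ yields a $K$-invariant positive definite form on $X\otimes V^{\otimes m}$, which restricts to a positive definite form on $\Gamma_{n,m}(X)=\mathrm{Hom}_K(\mathrm{triv},X\otimes V^{\otimes m})$. The remaining task is to verify that this form is Hermitian with respect to the star involution on $\mathbb H_m$ that defines unitarity on the Hecke algebra side. This reduces to checking self-adjointness of the Casimir-type operator used to build the $\mathbb H_m$-action against the tensor Hermitian form, which should follow directly from the self-adjointness of the Casimir of $\mathfrak{gl}_n$ and of the $K$-invariant inner product on $V$.
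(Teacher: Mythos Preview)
Your treatment of parabolic induction, standard modules, and unitarity is essentially the paper's own line of argument: Frobenius reciprocity plus the decomposition of $V^{\otimes m}$ under the Levi for the first, reduction to $\mathrm{GL}_1$-characters via the first for the second, and transferring the tensor Hermitian form and checking compatibility with the $*$-involution for the last. The substantial work in each case lies where you say it does (the Casimir compatibility for induction, Section~4 of the paper, is lengthy but routine).

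The genuine gap is in your irreducibility argument. Your step~(iii) proposes to identify $\Gamma_{n,m}(X)$ with the Langlands quotient of $\Gamma_{n,m}(I(X))$ by matching the lowest $K$-type of $X$ with the distinguished $W$-type characterizing $\mathrm{St}(\mathfrak m)$ in the sense of Rogawski. This matching is correct (it is the paper's Theorem~\ref{thm equality of k type and sm type}), and it does show that $\Gamma_{n,m}(X)$ is nonzero and has $\mathrm{St}(\mathfrak m)$ as its unique simple quotient. But it does \emph{not} rule out other composition factors sitting below $\mathrm{St}(\mathfrak m)$ in $\Gamma_{n,m}(X)$: a proper quotient of $\lambda(\mathfrak m)$ can perfectly well contain the distinguished $W$-type with multiplicity one while still being reducible. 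The characterization ``unique irreducible subquotient containing $\tau_\mu$'' pins down a composition factor, not a subquotient.

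The paper closes this gap by a different mechanism, and in fact reorders the argument so that Hermitian preservation is proved \emph{before} irreducibility and then used as a tool. Concretely: once one knows $\Gamma_{n,m}(X^h)\cong\Gamma_{n,m}(X)^*$, applying the standard-module step to $X^h=J(-\overline{\lambda_R},-\overline{\lambda_L})$ shows $\Gamma_{n,m}(X)^*$ has a unique simple quotient, hence $\Gamma_{n,m}(X)$ has a unique simple \emph{submodule}; since both the unique simple sub and the unique simple quotient are $\mathrm{St}(\mathfrak m)$ (with multiplicity one in the ambient standard), irreducibility follows. A second ingredient you are missing is the non-vanishing in general: when $\mathrm{ht}(\lambda_L,\lambda_R)\neq -\infty$ but the parameter is not ``thickened'', some of the other standards $X(\lambda_L,w\lambda_R)$ in the block may have height $-\infty$ and die under $\Gamma_{n,m}$, so a direct Grothendieck-group argument does not go through. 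The paper handles this by tensoring with a power of $\overline{\det}^{-1}$ to thicken, establishing the result there via a Grothendieck-group span argument, and then descending via the Bernstein--Zelevinsky functor $\mathbf{BZ}_{\mathrm{triv}}$ (their Theorem~\ref{thm bz undr arakawa suzuki}). If you want to salvage a $K$-type/$W$-type route, you would need to supplement it with a full Kazhdan--Lusztig multiplicity comparison (as in \cite{AS98,CT12}); what you have written so far does not supply that.
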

We refer the reader to the corresponding statements for the precise meaning of the preservations in Theorem \ref{thm functor preserve properties}. We also remark that up to a thickening trick (see Section \ref{s preserve irr}), for any irreducible Harish-Chandra module $X$ of $\mathrm{GL}_n(\mathbb C)$, we can find a unique $m$ such that $\Gamma_{n,m}(X)$ is non-zero (and so irreducible). This nature is specific to $\mathrm{GL}_n(\mathbb C)$, compared to the $\mathrm{GL}_n(\mathbb R)$ case in \cite{CT12}.

In view of the recent development of parabolic induction on $p$-adic side (see e.g. \cite{LM16}), one can transfer those properties using our functor in an effortless way (see Section \ref{s application on parabolic}). This avoids at least some reworking for the parabolic induction in the Harish-Chandra category, not mentioning techniques in these two categories are different. Indeed, our functor also reveals what the analogue should be, which is sometimes not completely trivial.

A refinement on the unitary dual is the Dirac series, which carries deeper structure between $K$-types and $W$-types. Combining the classification of the Dirac series in \cite{BP}, \cite{DW} and \cite{BC14}, we show that up to thickening, $\Gamma_{n,m}$ also exhibits the Lefschetz principle for the Dirac series in Section \ref{s lefeschetz principle dirac}. This can be regarded as an answer to a question posted in \cite[Page 200]{BCT} for our setting. Extensions to some non-unitary representations with non-zero Dirac cohomology are also discussed. 

One classical application of the Schur-Weyl duality is to link up the branching problem for symmetric groups $S_i$ to the tensoring problem for $\mathfrak{gl}_n(\mathbb C)$. To pose such analogous linkage in our context, we need to introduce two more ingredients. Firstly, using a natural subalgebra $\mathbb C[S_i]$ in $\mathbb H_m$, we can define a functor: for an irreducible representation $\tau$ of $S_i$,
\[   \mathbf{BZ}_{\tau}: \mathcal H_n \rightarrow \mathcal H_{n-i}; \quad \mathbf{BZ}_{\tau}(\pi)=\mathrm{Hom}_{S_i}(\tau, \pi),
\]
see Section \ref{ss generalized bz} for precise descriptions. Secondly, we define the Schur functor $\mathbb S_{\tau}(V)=\mathrm{Hom}_{S_i}(\tau, V^{\otimes i})$ as a $\mathrm{GL}_n(\mathbb C)$-representation, which is irreducible and can be computed explicitly from the classical Schur-Weyl duality. (Here we use the convention that $S_i$ acts on $V^{\otimes i}$ by a sign permutation.) This gives the tensoring functor $T_{\tau}: \mathcal{HC}_n \rightarrow \mathcal{HC}_n$; $X \mapsto X\otimes \mathbb S_{\tau}(V)$. Our main result is to relate these two functors:

\begin{theorem} (=Theorem \ref{thm bz undr arakawa suzuki}) \label{thm bz gamma tensor}
Let $\tau \in \mathrm{Irr}(S_i)$. For $X \in \mathcal{HC}_n$, there is a natural isomorphism:
\[      \mathbf{BZ}_{\tau} \circ \Gamma_{n,m+i}(X) \cong \Gamma_{n,m}\circ  T_{\tau}(X) .
\]
\end{theorem}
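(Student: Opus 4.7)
The plan is to build the isomorphism as a chain of canonical adjunctions and then verify it respects the $\mathbb{H}_m$-action. Spelling out the definitions,
\begin{align*}
\mathbf{BZ}_{\tau} \circ \Gamma_{n,m+i}(X) &= \mathrm{Hom}_{S_i}\bigl(\tau,\, \mathrm{Hom}_K(\mathrm{triv}, X \otimes V^{\otimes(m+i)})\bigr), \\
\Gamma_{n,m} \circ T_{\tau}(X) &= \mathrm{Hom}_K\bigl(\mathrm{triv},\, X \otimes \mathbb{S}_\tau(V) \otimes V^{\otimes m}\bigr).
\end{align*}
Decompose $V^{\otimes(m+i)} = V^{\otimes m} \otimes V^{\otimes i}$, and identify the subalgebra $\mathbb{C}[S_i] \subset \mathbb{H}_{m+i}$ used in the definition of $\mathbf{BZ}_{\tau}$ with the subgroup of $S_{m+i}$ acting by sign-permutation on the last $i$ tensor factors (and trivially on $X$ and on the first $m$ copies of $V$). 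Since this $S_i$-action and the $K$-action are on disjoint data, they commute, and the two Hom functors interchange, yielding
$$\mathrm{Hom}_K\bigl(\mathrm{triv},\, X \otimes V^{\otimes m} \otimes \mathrm{Hom}_{S_i}(\tau, V^{\otimes i})\bigr) = \mathrm{Hom}_K\bigl(\mathrm{triv},\, X \otimes V^{\otimes m} \otimes \mathbb{S}_\tau(V)\bigr).$$
Reordering tensor factors produces the desired vector space isomorphism, manifestly natural in $X$.

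It remains to check that this isomorphism intertwines the $\mathbb{H}_m$-actions, where the $\mathbb{H}_m$-action on the left-hand side is inherited from the subalgebra $\mathbb{H}_m \subset \mathbb{H}_{m+i}$ that commutes with $\mathbb{C}[S_i]$. The simple reflections $s_1, \dots, s_{m-1}$ act by sign-permutation on the first $m$ copies of $V$ and agree on the nose under the identification. The substantive verification is for the polynomial generators $x_j$ with $1 \le j \le m$. By the Arakawa--Suzuki style formula used in Section~\ref{sec-as}, $x_j$ on $X \otimes V^{\otimes(m+i)}$ is a sum of inter-factor Casimir contributions $\Omega_{k,j}$, where $k$ ranges over the $X$-slot and the $V$-slots other than $j$. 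The contributions from the slots corresponding to $X$ and to $V_1, \dots, \widehat{V_j}, \dots, V_m$ appear identically on both sides. The remaining contribution $\sum_{k=m+1}^{m+i} \Omega_{k,j}$ collapses to the single inter-factor Casimir $\Omega_{V^{\otimes i},\, V_j}$, and because the $S_i$-action commutes with $\mathfrak{gl}_n(\mathbb{C})$ this restricts to $\Omega_{\mathbb{S}_\tau(V),\, V_j}$ on the $\tau$-isotypic component.

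The principal obstacle is this compatibility of the $x_j$'s. It ultimately rests on the additivity identity $\Omega_{A \otimes B,\, V_j} = \Omega_{A, V_j} + \Omega_{B, V_j}$ for the inter-factor Casimir: applied to the enlarged ``$X$-slot'' of $\Gamma_{n,m}(T_\tau X)$, it gives
$$\Omega_{X \otimes \mathbb{S}_\tau(V),\, V_j} = \Omega_{X,\, V_j} + \Omega_{\mathbb{S}_\tau(V),\, V_j},$$
which matches the sum of contributions identified above. After absorbing the conventions about normalization and the sign with which $S_i$ acts on $V^{\otimes i}$ (already fixed in Section~\ref{sec-as}), this yields $\mathbb{H}_m$-equivariance. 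Naturality in $X$ is automatic, since every step above---the Hom-swap, the extraction of the trivial $S_i$-factor, and the Casimir computation---is manifestly functorial in $X \in \mathcal{HC}_n$.
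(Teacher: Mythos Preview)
Your argument has a genuine gap stemming from the placement of $S_i$ inside $\mathbb{H}_{m+i}$. In the paper's definition of $\mathbf{BZ}_{\tau}$ (Section~\ref{ss generalized bz}), $\mathbb{C}[S_i]$ is embedded via $s_l \mapsto s_l$ for $1 \le l \le i-1$, so under $\Theta$ it acts on the \emph{first} $i$ copies of $V$, not the last; correspondingly the residual $\mathbb{H}_m$-action is through $y_k \mapsto y_{i+k}$ and $s_k \mapsto s_{i+k}$. Your decomposition $V^{\otimes(m+i)} = V^{\otimes m} \otimes V^{\otimes i}$ with $S_i$ on the last block is therefore not the functor $\mathbf{BZ}_\tau$ under consideration.

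This placement is not merely cosmetic; it is exactly what makes the polynomial generators match. You write that ``$x_j$ on $X \otimes V^{\otimes(m+i)}$ is a sum of inter-factor Casimir contributions $\Omega_{k,j}$, where $k$ ranges over the $X$-slot and the $V$-slots other than $j$,'' but the Arakawa--Suzuki formula is $\Theta(y_j) = \sum_{0 \le k < j} \Omega_{k,j} + \frac{n}{2}$: only slots with \emph{smaller} index appear. With your placement, for $j \le m$ the sum never touches positions $m+1,\dots,m+i$, so your ``remaining contribution $\sum_{k=m+1}^{m+i}\Omega_{k,j}$'' is simply absent, and there is nothing to supply the required $\Omega_{\mathbb{S}_\tau(V),V_j}$ term on the other side. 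With the correct placement (the paper's), the element $y_l$ of $\mathbb{H}_m$ acts as $y_{i+l}$ in $\mathbb{H}_{m+i}$, whose Casimir sum $\sum_{k=0}^{i+l-1}\Omega_{k,i+l}$ naturally groups the terms $k=0,1,\dots,i$ into the single Casimir between $X \otimes \mathbb{S}_\tau(V)$ and the $l$-th remaining $V$-copy, via the additivity identity you correctly identify. Once you put $S_i$ on the first $i$ factors and use the correct triangular formula for $\Theta(y_j)$, your Hom-swap and Casimir-additivity outline becomes exactly the paper's proof.
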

We call $\mathbf{BZ}_{\tau}$ to be a generalized Bernstein-Zelevinsky functor, reflecting our original viewpoint from $p$-adic groups in \cite{CS19, Ch22+}. One can then translate recent results in \cite{Ch22+, Ch22+c} to $\mathcal{HC}_n$, see Section \ref{s bz derivatives applications}. It is an interesting problem to extend Theorem \ref{thm bz gamma tensor} to constructions of other classical groups such as \cite{CT11} and \cite{Ca22}.

Another problem of our interest is the branching laws, whose Lefschetz principle is also investigated in \cite{Ch23}. However,  the branching law is dealt with in the category of Casselman-Wallach representations and irreducible representations could become non-admissible after restriction. This makes harder for using the functor $\Gamma_{n,m}$ to study branching law directly. On the other hand, as seen in this article, results on parabolic inductions and Jacquet functors in \cite{Ch22+b, Ch22+c, Ch22+d} can be transferred in some form and this opens up possible applications to some branching problems, see e.g. Remarks \ref{rmk tensor product} and \ref{rmk deduce branching from jacquet}.



\subsection{Acknowledgements}
The authors would like to thank the referee for very helpful comments and suggesting the contents in Section \ref{subsec nonunit}. This project is supported in part by the Research Grants Council of the Hong Kong Special Administrative Region, China (Project No: 	17305223) and  the National Natural Science Foundation of China (Project No. 12322120, 12371033).

\section{Preliminaries} \label{s prelim}

In this section, we introduce some basic notations. We also review the representation theory for $\mathrm{GL}_n(\mathbb C)$ and $\mathbb H_m$, which also demonstrates some similarities between their representation theories.

\subsection{Complex groups} \label{ss notation cplx gp}
Let $G = \mathrm{GL}_n(\mathbb{C})$ be a complex Lie group treated as a real group, with a maximal compact subgroup $K = U(n)$ consisting of unitary matrices i.e.
\[  U(n):= \left\{ E \in G: \overline{E}^tE=I_n \right\},
\]
where $\overline{E}$ is the complex conjugation of $E$ and $E^t$ is the transpose of $E$. Let $B$ be the Borel subgroup of upper triangular matrices in $\mathrm{GL}_n(\mathbb C)$. Let $H$ be the subgroup of diagonal matrices in $\mathrm{GL}_n(\mathbb C)$. Let $H=TA$ be the Cartan decomposition of $H$ so that $T \cong (S^1)^{\times n}$ and $A \cong (\mathbb R_{>0})^{\times n}$. Write $\frg_0= \mathfrak{gl}_n(\mathbb{C})$, $\frk_0$, $\frh_0$, $\frt_0$ and $\fra_0$ as their Lie algebras, and remove the subscripts for their complexifications. (Those notions for Lie algebras depend on $n$, but it will be clear from the context.) The group $S_n$ acts on $\mathfrak h_0$ by permuting the coordinates. We use $j$ to denote the action of $\sqrt{-1}$ coming from the complexification (for instance, one has
$\mathfrak{g} = \{P+jQ\ |\ P,Q \in \mathfrak{g}_0\}$).

Following \cite{D} and \cite[Section 7.1]{Vo81}, we make the following identifications for $\frg$ and its subalgebras:
Define $\phi^L, \phi^R: \frg_0 \to \frg$ by:
\begin{align} \label{eqn isomorphism 1}
\phi^L(E) := \frac{1}{2}(E-jiE),\quad 
\phi^R(E) := \frac{1}{2}(\overline{E}+ji\overline{E}),
\end{align}
where $i$ is $\sqrt{-1}I_n$ in
$\frg_0 $. Then one can easily check there is an isomorphism:
\begin{align} \label{eqn isomorphism as comple lie algebra}
\frg_0\oplus \frg_0 \cong  \frg .
\end{align}
given by $(E,E') \mapsto \phi^L(E)+\phi^R(E')$.
Using the isomorphism, all elements in $\frg$ will be written as $(E,E') \in \frg_0 \oplus \frg_0$ from now on. In particular, for $A, B \in \frk_0$, 
$A+jB = \phi^L(A+iB) + \phi^R(\overline{A-iB})\in \frk$ corresponds to
$$(A+iB, \overline{A-iB}) = (A+iB, \overline{A}+i\overline{B}) = (A+iB, -A^t-iB^t) = (A+iB,-(A+iB)^t),$$
since $A^t = -\overline{A}$ and 
$B^t = -\overline{B}$. So we have the identifications
\begin{equation}\label{identification}
 \quad \frk \cong \{(E, -E^t): E \in\frg_0\},\quad \quad \frt\cong\{(H, -H): H\in \frh_0\},\quad \quad \fra\cong\{(H, H): H\in \frh_0\}.
\end{equation}







\subsection{Representation theory of $\mathrm{GL}_n(\mathbb C)$} \label{sec-ps}
Recall that $\mathcal{HC}_n$ is the category of Harish-Chandra modules (a.k.a. $(\mathfrak g, K)$-modules) of $\mathrm{GL}_n(\mathbb C)$ in the sense of \cite[Definition 0.3.8]{Vo81}. Note that for any Harish-Chandra module $X$ in $\mathcal{HC}_n$, the actions of $\phi^L(E)$ and $\phi^L(iE)$ differ by the scalar $\sqrt{-1}$. The same holds if we replace $\phi^L$ by $\phi^R$.

Let $U(\mathfrak g_0 \oplus \mathfrak g_0)$ be the universal enveloping algebra of $\mathfrak g_0\oplus \mathfrak g_0$ over $\mathbb C$. Using the above discussion, we can and shall regard $X$ as a $U(\mathfrak g_0\oplus \mathfrak g_0)$-module determined by the action:
\[   (E_1,E_2). v= (\phi^{L}(E_1)+\phi^R(E_2)).v
\]
for $v \in X$ and $(E_1, E_2) \in \mathfrak g_0\oplus \mathfrak g_0$.




For $n_1+n_2=n$, let $G=\mathrm{GL}_{n_1+n_2}(\mathbb C)$ and let $P=P_{n_1,n_2}$ be the parabolic subgroup containing matrices of the form 
\begin{equation} \label{eq-n1n2}  \begin{pmatrix} g_1 & * \\ & g_2 \end{pmatrix}
\end{equation}
for $g_i \in \mathrm{GL}_{n_i}(\mathbb C)$ $(i=1,2)$. For a Casselman-Wallach representation $\tau$ of $\mathrm{GL}_{n_1}(\mathbb C)\times \mathrm{GL}_{n_2}(\mathbb C)$-representation, $\tau$ is inflated to a $P$-representation and let $\mathrm{Ind}_P^{G}(\tau)$ be the space of smooth functions from $G$ to $\tau$ satisfying: for $p \in P$ and $k \in K$,
\[    f(pk) =\delta^{1/2}(p) (p\cdot f(k)),
\]
where $\delta$ is the modular character of $P$. The $G$-action is given by the right regular action. Here a Casselman-Wallach representation of a reductive Lie group means a smooth admissible Fr\'eceht representation of moderate growth. For the relation between Casselman-Wallach representations and Harish-Chandra modules, one sees discussions \cite[Sections 8-10]{Cas89} or \cite{BK14}.

For a Harish-Chandra module $\tau$ of $\mathrm{GL}_{n_1}(\mathbb C) \times \mathrm{GL}_{n_2}(\mathbb C)$, by abuse of notations, we also write $\mathrm{Ind}_P^G(\tau)$ to be $\mathrm{Ind}_P^G(\widetilde{\tau})$ for some smooth Fr\'echet globalization $\widetilde{\tau}$ of $\tau$ inflated to a $P_{n_1,n_2}$-representation.

For $Y_1$ in $\mathcal{HC}_{n_1}$ and $Y_2$ in $\mathcal{HC}_{n_2}$, define the real parabolic induction as:
\[   Y_1 \times Y_2 = \mathrm{Ind}_{P}^{G} (Y_1 \boxtimes Y_2)_{K-{\bf finite}},
\]
which is regarded as a representation in $\mathcal{HC}_{n_1+n_2}$ and the action from $G$ is the right translation. It follows from \cite[Page 179]{Knp86} that the product is an associative operation. 

\begin{definition} \label{def ps hc}
For any $a, b \in \mathbb{C}$ satisfying $a-b \in \mathbb{Z}$, $\chi_{a, b}$ is a $\mathrm{GL}_1(\mathbb{C})$-character given by 
\begin{equation} \label{eqn character gl1}   \chi_{a, b}(z)=z^{a}\bar{z}^{b}.
\end{equation}
Let  $\lambda_L = (\lambda_{L,1}, \dots, \lambda_{L,n})$, $\lambda_R = (\lambda_{R,1}, \dots, \lambda_{R,n}) \in \mathfrak{h}_0^* \cong \mathbb{C}^n$
be such that
$\lambda_{L,i}-\lambda_{R,i} \in \mathbb{Z}$ for all $i$. The {\bf principal series (representation)} $X(\lambda_L,\lambda_R)$ is defined as:
\begin{equation} \label{eq-psinduce} X(\lambda_L,\lambda_R) = \chi_{\lambda_{L,1},\lambda_{R,1}} \times \chi_{\lambda_{L,2},\lambda_{R,2}}  \times \dots \times \chi_{\lambda_{L,n},\lambda_{R,n}}.
\end{equation}
Write $\mu := \lambda_L - \lambda_R$ and $\nu := \lambda_L + \lambda_R$. Using the identification \eqref{identification}, 
it can also be written as:
\[X(\lambda_L,\lambda_R) = \mathrm{Ind}_{B}^G(\mathbb{C}_{\mu} \boxtimes \mathbb{C}_{\nu} \boxtimes 1)_{K-{\bf finite}},\] where we write $B = TAN$ with $N$ being the subgroup of unipotent upper triangular matrices. 
 
The $K$-type with extremal (but not necessarily dominant) weight $\mu$ occurs exactly once in $X(\lambda_L, \lambda_R)$. Let $J(\lambda_L, \lambda_R)$ be the unique irreducible subquotient of $X(\lambda_L, \lambda_R)$ containing this $K$-type (see \cite{PRV67}).
\end{definition}


\smallskip
The principal series plays a more prominent role in the representation theory of complex Lie groups since it constructs all irreducible Harish-Chandra modules in a nice manner:

\begin{theorem}[\cite{Zh}] \label{thm-Zh}
Retain the above notations. Then the following statements hold:
\begin{itemize}
\item[(a)] Every irreducible Harish-Chandra module is of the form $J(\lambda_L,\lambda_R)$.
\item[(b)] Two such modules $J(\lambda_L,\lambda_R)$ and
$J(\lambda_L^{\prime},\lambda_R^{\prime})$ are equivalent if and
only if there exists $w\in S_n$ such that
$w\lambda_L=\lambda_L^{\prime}$ and $w\lambda_R=\lambda_R^{\prime}$.
\end{itemize}
\end{theorem}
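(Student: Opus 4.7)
The plan is to prove (a) by combining the Harish--Chandra subquotient theorem with multiplicity one of the extremal $K$-type, and to prove (b) by pinning down the parameter through the infinitesimal character together with that $K$-type.

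For (a), every irreducible Harish-Chandra module $\pi$ of $\mathrm{GL}_n(\mathbb C)$ embeds as a subquotient of a minimal principal series by the subquotient theorem. Since $B=TAN$ is a minimal parabolic in $\mathrm{GL}_n(\mathbb C)$ viewed as a real group, every such minimal principal series is of the form $X(\lambda_L,\lambda_R)$. The PRV result cited in Definition \ref{def ps hc} yields that the $K$-type with extremal weight $\mu=\lambda_L-\lambda_R$ occurs with multiplicity exactly one in $X(\lambda_L,\lambda_R)$, so it lies in a unique irreducible subquotient, namely $J(\lambda_L,\lambda_R)$. To realize a given irreducible $\pi$ in this list, I would select a minimal $K$-type of $\pi$ in the sense of Vogan, use it to read off a candidate extremal weight $\mu$ and continuous parameter $\nu$, and match $\pi$ with $J(\lambda_L,\lambda_R)$ where $\lambda_L=(\nu+\mu)/2$ and $\lambda_R=(\nu-\mu)/2$.

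For (b), both the infinitesimal character and the extremal $K$-type are invariants of $J(\lambda_L,\lambda_R)$. Under the identification $\frg\cong\frg_0\oplus\frg_0$ in \eqref{eqn isomorphism as comple lie algebra}, the Weyl group of $\frh$ is $S_n\times S_n$, and the infinitesimal character of $X(\lambda_L,\lambda_R)$ is represented by the $S_n\times S_n$-orbit of $(\lambda_L,\lambda_R)$. An isomorphism $J(\lambda_L,\lambda_R)\cong J(\lambda_L',\lambda_R')$ therefore forces $\lambda_L'=w_1\lambda_L$ and $\lambda_R'=w_2\lambda_R$ for some $w_1,w_2\in S_n$, while equality of the extremal $K$-types forces $w_1\lambda_L-w_2\lambda_R$ to be $S_n$-conjugate to $\lambda_L-\lambda_R$. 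The standard intertwining operator attached to $w\in S_n$ produces an isomorphism of $X(\lambda_L,\lambda_R)$ with $X(w\lambda_L,w\lambda_R)$ preserving the extremal $K$-type, so after replacing $(\lambda_L',\lambda_R')$ by a diagonal $S_n$-translate one reduces to the case $w_1=w_2=w$; this gives both directions of (b).

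The main obstacle is the reduction, inside part (b), from the a priori $S_n\times S_n$ ambiguity of the infinitesimal character to the diagonal $S_n$ ambiguity. A clean way to carry it out requires tracking the action of the standard intertwining operators on the PRV $K$-type, or invoking the Langlands classification for $\mathrm{GL}_n(\mathbb C)$, which packages the parameter on the diagonal from the outset. Once this matching is in place, part (a) follows quickly from PRV multiplicity one and the subquotient theorem.
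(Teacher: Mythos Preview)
The paper does not prove this theorem at all: it is stated with the citation \cite{Zh} and used as a black box throughout. So there is no ``paper's proof'' to compare against; Zhelobenko's classification is simply imported.

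Your sketch is a reasonable outline of how one would reprove the result from standard ingredients (subquotient theorem, PRV multiplicity one, infinitesimal character, intertwining operators). However, the place you flag as ``the main obstacle'' in (b) is not actually resolved by what you wrote. After using the infinitesimal character to get $\lambda_L'=w_1\lambda_L$, $\lambda_R'=w_2\lambda_R$ and conjugating diagonally to arrange $w_1=1$, you are left with showing that $J(\lambda_L,\lambda_R)\cong J(\lambda_L,\sigma\lambda_R)$ forces $\sigma\lambda_R=\lambda_R$ (up to the joint stabilizer). Your $K$-type argument only gives that $\lambda_L-\lambda_R$ and $\lambda_L-\sigma\lambda_R$ lie in the same $S_n$-orbit, which does not by itself pin $\sigma$ down; one needs a finer invariant (e.g., the full lowest $K$-type datum in the Langlands/Vogan classification, or an analysis of when the standard intertwining operator is nonzero on the PRV component). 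This is exactly the substantive content of Zhelobenko's theorem and of the PRV paper \cite{PRV67}, and your proposal stops short of supplying it. Since the paper only cites the result, there is nothing further to compare.
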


We say $X(\lambda_L,\lambda_R)$ is a {\bf standard module} if $\mathrm{Re}(\nu) = \mathrm{Re}(\lambda_L + \lambda_R )$ is dominant i.e. 
\[    \mathrm{Re}(\lambda_{L,1}+\lambda_{R,1})\geq \ldots \geq \mathrm{Re}(\lambda_{L,n}+\lambda_{R,n}) .
\]
It is well-known (see e.g. \cite[Th\'eor\`eme I.4.2]{D}, \cite[Theorem 8.54]{Knp86}) that if $X(\lambda_L,\lambda_R)$ is standard, then it has a unique maximal proper submodule, so that $J(\lambda_L,\lambda_R)$ appears as a unique quotient of $X(\lambda_L,\lambda_R)$. The notion of standard representations is more convenient (than principal series) when later we have to compare with the modules of graded Hecke algebras (defined in next section).


\subsection{Graded Hecke algebras} \label{ss gaha}

\begin{definition} \label{def gaha type a}
The graded Hecke algebra $\mathbb H_m$ of type $A$ is the associative unital algebra over $\mathbb C$ with generators $y_1, \ldots, y_m$ and $s_1, \ldots, s_{m-1}$ such that 
\begin{itemize}
    \item $y_iy_j=y_jy_i$ for any $i,j$;
    \item $s_i^2=1$ for all $i$;
    \item $s_is_j=s_js_i$ for $|i-j|>1$;
    \item $s_is_{i+1}s_i=s_{i+1}s_is_{i+1}$;
    \item $s_iy_i-y_{i+1}s_i=1$;
    \item $s_iy_j-y_js_i=0$ if $j\neq i, i+1$.
\end{itemize}
\end{definition}
Let $S_m$ be the symmetric group permuting $m$ elements. Note that $s_1, \ldots, s_{m-1}$ generate the group algebra $\mathbb C[S_m]$. Thus, for $w \in S_m$, we also regard as an element in $\mathbb H_m$ via the natural embedding from $\mathbb C[S_m]$ to $\mathbb H_m$.

There is a natural embedding of $\mathbb H_{m_1}\otimes \mathbb H_{m_2}$ to $\mathbb H_{m_1+m_2}$ via the maps: for $i=1,\ldots, m_1$ and $j=1,\ldots, m_2$,
\[     y_i\otimes 1 \mapsto y_i , \quad 1 \otimes y_j \mapsto y_{m_1+j} ,\]
for $i=1, \ldots, m_1-1$ and $j=1, \ldots, m_2-1$,
\[     s_i \otimes 1 \mapsto s_i, \quad 1 \otimes s_j \mapsto  s_{m_1+j} .\]
For $\mathbb H_{m_1}$-module $\pi_1$ and $\mathbb H_{m_2}$-module $\pi_2$, we write 
\[  \pi_1 \times \pi_2 := \mathbb H_{m_1+m_2} \otimes_{\mathbb H_{m_1}\otimes \mathbb H_{m_2}} (\pi_1\boxtimes \pi_2).
\]
The associativity of this product follows from the associativity of tensor products and the standard fact that the tensoring $\mathbb A\otimes_{\mathbb A}$ is the identity functor for any algebra with an unit.

\subsection{Representation theory of $\mathbb H_m$}

The classification for irreducible $\mathbb H_m$-modules is known for long time, see e.g. \cite{Ze80, Ro86}. Let $\mathcal H_m$ be the category of finite-dimensional $\mathbb H_m$-modules.

A segment is of the form $[a,b]$ for some $a,b \in \mathbb C$ with $b-a\geq 0$. We shall consider a segment $[a,b]$ to be a set $\left\{ a, \ldots, b\right\}$. A multisegment is a multiset of non-empty segments. For convenience, set $[a,a-1]=\emptyset$, and a segment can be an empty set and a multisegment can also be an empty set.

Two segments $\Delta_1$ and $\Delta_2$ are said to be linked if $\Delta_1\cup \Delta_2$ is still  a segment, and $\Delta_1\not\subset \Delta_2$ and $\Delta_2\not\subset \Delta_1$. For two segments $\Delta_1=[a_1,b_1], \Delta_2=[a_2,b_2]$, we write $\Delta_1< \Delta_2$ if $\Delta_1$ and $\Delta_2$ are linked and $a_1<a_2$. 

For $c \in \mathbb C$, define $\chi_c$ to be a character on $\mathbb H_1=\mathbb C[y]$ such that $\psi_c(y)=c$. For each segment $\Delta=[a,b]$, define $\mathrm{St}(\Delta)$ to be the unique simple quotient of 
\[   \psi_{a} \times \psi_{a+1} \times \ldots \times \psi_{b}
\]
Moreover, $\mathrm{St}(\Delta)$ is one-dimensional and is the sign representation as a $S_n$-representation. A standard property on parabolic inductions is that: if $\Delta_1$ and $\Delta_2$ are not linked, then
\begin{equation} \label{eqn unlinked commute} \mathrm{St}(\Delta_1) \times \mathrm{St}(\Delta_2) \cong \mathrm{St}(\Delta_2) \times \mathrm{St}(\Delta_1) .
\end{equation}

For a multisegment $\mathfrak m=\left\{ \Delta_1, \ldots, \Delta_k\right\}$, we label the segments such that 
\begin{equation}  \label{eqn prec ordering}
\Delta_1 \not< \ldots \not< \Delta_k .
\end{equation}
Then, let
\[  \lambda(\mathfrak m):= \mathrm{St}(\Delta_1)\times \ldots \times \mathrm{St}(\Delta_k) . \]
This is called a {\bf standard module}, and the unique simple quotient of $\lambda(\mathfrak m)$ is denoted by $\mathrm{St}(\mathfrak m)$. It follows from (\ref{eqn unlinked commute}) that $\lambda(\mathfrak m)$ is independent of a choice of an order of segments in $\mathfrak m$. In particular, for writing each segment in $\mathfrak m$ as $\Delta_i=[a_i, b_i]$, we can choose an ordering satisfying (\ref{eqn prec ordering}) by fixing:
\[  \mathrm{Re}(a_1) \geq \mathrm{Re}(a_2) \geq \ldots \geq \mathrm{Re}(a_n) .
\]
Standard modules construct all irreducible $\mathbb H_m$-modules in the following sense:

\begin{theorem} \cite{Ze80} \label{thm classify irred hecke}
\begin{enumerate}
\item For any irreducible $\mathbb H_m$-module $\pi$, there is a multisegment $\mathfrak m$ such that $\pi \cong \mathrm{St}(\mathfrak m)$.
\item For two multisegments $\mathfrak m_1, \mathfrak m_2$, if $\mathrm{St}(\mathfrak m_1)\cong \mathrm{St}(\mathfrak m_2)$, then $\mathfrak m_1=\mathfrak m_2$.
\end{enumerate}
\end{theorem}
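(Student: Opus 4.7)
This is Zelevinsky's classification theorem, originally proved for $p$-adic $\mathrm{GL}_n$ and transported to $\mathbb H_m$ via the Borel--Casselman and Lusztig equivalences; my plan is to reproduce (a graded-Hecke-algebra version of) the Bernstein--Zelevinsky / Rogawski proof in three stages, corresponding to existence, the unique-quotient property of $\lambda(\mathfrak m)$, and injectivity of $\mathfrak m\mapsto \mathrm{St}(\mathfrak m)$.

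\emph{Existence.} Given an irreducible $\pi\in \mathcal H_m$, exploit the commutative polynomial subalgebra $\mathbb C[y_1,\ldots,y_m]$: since $\mathbb H_m$ is finite over its center $\mathbb C[y_1,\ldots,y_m]^{S_m}$, $\pi$ is finite-dimensional and admits a generalized $y$-eigenvector $v$ with eigenvalue $(c_1,\ldots,c_m)$. Frobenius reciprocity with respect to the natural inclusion $\mathbb C[y_1,\ldots,y_m]\hookrightarrow \mathbb H_m$ realizes $\pi$ as a subquotient of the minimal principal series $\chi_{c_1}\times\cdots\times \chi_{c_m}$. Using the unlinked commutation (\ref{eqn unlinked commute}) iteratively, one can regroup consecutive factors into Steinberg pieces $\mathrm{St}(\Delta_i)$ and reorder them to satisfy (\ref{eqn prec ordering}), realizing $\pi$ as a subquotient of some $\lambda(\mathfrak m)$. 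Combined with the next stage, $\pi$ is forced to be the unique irreducible quotient $\mathrm{St}(\mathfrak m)$.

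\emph{Unique quotient.} For $\mathfrak m$ ordered as in (\ref{eqn prec ordering}), construct a long intertwining operator $\lambda(\mathfrak m)\to \lambda(\mathfrak m^{\mathrm{op}})$ as a composition of simple intertwiners along a reduced decomposition, and show its image is simple. The combinatorial input is a case analysis of each adjacent swap: for an unlinked pair the simple intertwiner is an isomorphism, while for a linked pair it has a Jordan--H\"older factor that can be explicitly described as the standard module attached to the "merge" multisegment. Induction on the number of segments packages the kernel as the unique maximal proper submodule, and the image as $\mathrm{St}(\mathfrak m)$.

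\emph{Injectivity.} If $\mathrm{St}(\mathfrak m_1)\cong \mathrm{St}(\mathfrak m_2)$, matching central characters (values of $\mathbb C[y_1,\ldots,y_m]^{S_m}$) immediately forces $\bigsqcup_i \Delta_{1,i}=\bigsqcup_j \Delta_{2,j}$ as multisets. To recover the segment decomposition itself, restrict to $\mathbb H_{m-\ell}\otimes \mathbb H_\ell$ for appropriate $\ell$ and read off the "highest" segment (the one whose starting point has maximal real part) from the leading Jacquet-module term via a geometric-lemma-type computation; strip it off and induct on $|\mathfrak m|$. The main obstacle is the unique-quotient step: the linked/unlinked case analysis of simple intertwiners is the combinatorial heart of Zelevinsky's argument and the part that must be set up with care in the graded setting, since after it is in place both existence (via Langlands-type reordering) and injectivity (via the restriction/geometric-lemma argument) are largely formal.
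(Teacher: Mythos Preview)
The paper does not give its own proof of this theorem: it is stated with the citation \cite{Ze80} and immediately followed by Remark~\ref{rmk construction of irreducible from S_m types}, with no argument supplied. So there is nothing in the paper to compare your proposal against; the authors are simply quoting Zelevinsky's classification (transported to $\mathbb H_m$ via \cite{Bo76} and \cite{Lu89}) as a known input.

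That said, your outline has one genuine gap worth flagging. In the \emph{Existence} stage you write that realizing $\pi$ as a subquotient of some $\lambda(\mathfrak m)$, ``combined with the next stage,'' forces $\pi\cong \mathrm{St}(\mathfrak m)$. This does not follow: knowing that $\lambda(\mathfrak m)$ has a unique simple \emph{quotient} says nothing about its other composition factors, and $\pi$ could a priori be one of those. What Zelevinsky actually does is show that every simple subquotient of $\lambda(\mathfrak m)$ other than $\mathrm{St}(\mathfrak m)$ is already a subquotient of some $\lambda(\mathfrak m')$ with $\mathfrak m'$ obtained from $\mathfrak m$ by an elementary intersection--union operation on a linked pair (this is exactly the content referenced later in the paper at \cite[Theorem 7.1]{Ze80}). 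One then descends along the resulting partial order on multisegments until $\pi$ is forced to be the top quotient of some $\lambda(\mathfrak m'')$. Your ``unique quotient'' stage supplies the intertwining-operator machinery needed for this, but the inductive descent step itself is missing from your write-up and is not a formality; it is the place where the linked/unlinked analysis is actually used for existence, not just for uniqueness. The remaining stages (unique quotient via the long intertwiner, and injectivity via Jacquet-type restriction) are sketched correctly.
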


\begin{remark} \label{rmk construction of irreducible from S_m types}
One may notice there is an alternate way to define irreducible $\mathbb H_m$-modules in \cite{Ro85} by using a uniqueness of some $S_m$-types. This supplements a parallel story (i.e. Lefschetz principle) to using $K$-types in defining irreducible Harish-Chandra modules in Definition \ref{def ps hc}. 
\end{remark}

\section{Arakawa-Suzuki type functors} \label{s as functor}

\subsection{`Standard' representations} \label{ss std repn}

The definition of our Arakawa-Suzuki type functor requires a choice of a `standard representation' $V$ of $\mathrm{GL}_n(\mathbb C)$. For reasons that become obvious later in this paper (see Section \ref{ss standard module dimension} below), we make the following choice of the standard representation:
\begin{lemma} \label{lem std repn}
Let $V \cong \mathbb C^n$ be the conjugate standard representation of $\mathrm{GL}_n(\mathbb C)$ i.e. $g \in \mathrm{GL}_n(\mathbb C)$ acting on $V$ by the matrix multiplication of $\overline{g}$. Then $V=J(\rho,e_1+ \rho)$, where $\rho$ is half the sum of positive roots in the root system determined by $H$ and $B$, and $\{e_i \}_{1 \leq i \leq n}$ is the standard basis of $\mathbb{C}^n$.

\end{lemma}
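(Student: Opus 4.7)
The approach is to identify $V$ explicitly as a module over $\frg \cong \frg_0 \oplus \frg_0$, compute its infinitesimal character and its $K$-structure, and then apply Zhelobenko's Theorem \ref{thm-Zh} to pin down the Langlands parameters.

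First, I would compute the actions of $\phi^L(E)$ and $\phi^R(E)$ on $V$. By definition of the conjugate standard representation, $E \in \frg_0$ acts on $V$ by the complex-conjugate matrix $\overline{E}$, while the complexification scalar $j$ acts as $\sqrt{-1}$ on any complex Harish-Chandra module. A direct substitution into the formulas
\[ \phi^L(E) = \tfrac12 (E - jiE), \qquad \phi^R(E) = \tfrac12(\overline{E} + ji\overline{E}) \]
yields $\phi^L(E).v = 0$ and $\phi^R(E).v = Ev$ for all $v \in V$. Via the isomorphism (\ref{eqn isomorphism as comple lie algebra}), this identifies $V$ with $F_0 \boxtimes F_{e_1}$, where $F_\lambda$ denotes the finite-dimensional irreducible $\frg_0$-module of highest weight $\lambda$ (so $F_0$ is the trivial module and $F_{e_1}$ is the tautological representation of $\mathfrak{gl}_n(\mathbb{C})$).

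From this identification I would read off two pieces of data. First, by Harish-Chandra's theorem the infinitesimal character of $F_\lambda$ is $\lambda + \rho$, so the infinitesimal character of $V$, viewed as a class in $(\frh_0^* \oplus \frh_0^*) / (S_n \times S_n)$, is the class of $(\rho, e_1 + \rho)$; this matches the infinitesimal character of $X(\rho, e_1 + \rho)$. Second, the restriction $V|_K$ is irreducible: for $g \in U(n)$, $\overline{g} = (g^{-1})^t$, so $V$ is the contragredient of the defining $K$-representation, hence the irreducible $K$-type of highest weight $-e_n$, whose set of extremal weights is $\{-e_1, \ldots, -e_n\}$. In particular $-e_1 = \rho - (e_1 + \rho)$ is an extremal weight of this $K$-type.

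To conclude, by Theorem \ref{thm-Zh}(a), $V \cong J(\lambda_L', \lambda_R')$ for some pair, unique up to the diagonal $S_n$-action. Since $\rho$ is $S_n$-regular, I may normalize $\lambda_L' = \rho$ using this action. The infinitesimal-character condition then forces $\lambda_R' \in S_n \cdot (e_1 + \rho)$, while the fact that the $K$-type with extremal weight $\mu' = \lambda_L' - \lambda_R'$ must occur in $V$ forces $\lambda_R' = \rho + e_i$ for some $i$. An elementary check (using that $\rho$ has strictly decreasing coordinates) shows that the only element of $S_n \cdot (e_1 + \rho)$ of the form $\rho + e_i$ is $e_1 + \rho$ itself, so $V = J(\rho, e_1 + \rho)$. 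The principal delicate point is the sign bookkeeping in the first step, where one must carefully distinguish the three occurrences of $\sqrt{-1}$: the scalar $i = \sqrt{-1}\,I_n \in \frg_0$ appearing in $\phi^L, \phi^R$, the complexification scalar $j$, and the conjugated complex structure on $V$. Once the signs are handled correctly, the remaining parameter matching is a routine application of the Zhelobenko classification.
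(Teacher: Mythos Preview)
Your proposal is correct and follows essentially the same approach as the paper: both compute that under the identification $\frg \cong \frg_0 \oplus \frg_0$, the first copy acts trivially on $V$ while the second acts as the standard representation, and both read off the infinitesimal character $(\rho, e_1+\rho)$ from this. The paper stops there, implicitly taking for granted that this identifies the Langlands parameter; you are more careful, supplementing the infinitesimal-character computation with an explicit $K$-type check and an appeal to Theorem~\ref{thm-Zh} to rule out the other $S_n$-translates of $e_1+\rho$. This extra bookkeeping is not strictly necessary (for an irreducible finite-dimensional module the highest weight on each $\frg_0$-factor already determines the parameter), but it does no harm and makes the argument self-contained.
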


\begin{proof}
Let $\xi:\mathfrak{g}\to\mathfrak{gl}(V)$ be the (complexified) derivative of the conjugate standard representation, that is, for all $X, Y \in \mathfrak{g}_0$, $\xi(X+jY):=\overline{X}+i\overline{Y} \in \mathfrak{gl}(V)$. 
Under the isomorphism $\mathfrak{g}_0 \oplus \mathfrak{g}_0 \cong \mathfrak{g}$ given in \eqref{eqn isomorphism as comple lie algebra}, one has
\begin{align*}
    \xi(P,0)&=\frac{1}{2}\xi(P-jiP) = \frac{1}{2}(\overline{P}-i(-i\overline{P})) = 0 \\
    \xi(0,Q)&=\frac{1}{2}\xi(\overline{Q}+ji\overline{Q}) = \frac{1}{2}(Q+i(-iQ)) = Q
\end{align*}
So $V$ is trivial (with infinitesimal character $\rho$) on the first copy of $\mathfrak{g}_0$, 
and is the standard representation (with infinitesimal character $e_1 + \rho$) on the second copy of $\mathfrak{g}_0$. 
\end{proof}

\subsection{Casimir element}

Let $E_{ij} \in \mathfrak{g}_0$ be the $n \times n$ matrix with $1$ on the $(i,j)$-entry and $0$ on the other entries. For $0\leq k<l \leq m$, define the Casimir element:
\[ \Omega_{kl} := \sum_{1 \leq i, j \leq n} 1^{\otimes k} \otimes  E_{ij} \otimes 1^{\otimes l-k-1} \otimes E_{ji} \otimes 1^{m-l}  \in U(\frg_0)^{\otimes (m+1)}  . \]
Here $1$ is the unit in $U(\mathfrak g_0)$.

Note that $\left\{ E_{ji} \right\}$ is a dual basis for $\left\{ E_{ij} \right\}$ under the pairing:
\[   (E, E') \mapsto \mathrm{tr}(EE'{}^t) 
\]
and so for any $g \in \mathrm{GL}_n(\mathbb C)$, $\Omega_{kl}$ is invariant under the adjoint action of $\mathrm{GL}_n(\mathbb{C})$:
\begin{align} \label{eqn invariant casimir}
\mathrm{Ad}(g)(\Omega_{kl}):= \sum_{1\leq i,j \leq n} 1^{\otimes k}\otimes gE_{ij}g^{-1}\otimes 1^{\otimes (l-k-1)} gE_{ji}g^{-1} \otimes 1^{\otimes (m-l)} =\Omega_{kl} .
\end{align}

\subsection{Arakawa-Suzuki realization of $\mathbb H_m$} \label{sec-as}

\begin{theorem} \cite{AS98} 
There is an injective algebra (over $\mathbb C$) homomorphism $\Theta: \mathbb{H}_m \longrightarrow U(\frg_0)^{\otimes (m+1)}$ defined by:
$$\begin{cases}
\Theta(s_i) := -\Omega_{i,i+1},  &1 \leq i < m;\\
\Theta(y_{l}) := \sum_{0 \leq x < l} \Omega_{x,l} + \frac{n}{2}(1^{\otimes (m+1)}), &1 \leq l \leq m.
\end{cases}$$
We shall write $\Theta_{\mathbb H_m}$ for $\Theta$ if we have to specify the underlying graded Hecke algebra.
\end{theorem}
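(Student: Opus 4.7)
The plan is the classical Arakawa-Suzuki strategy: verify each of the six defining relations of $\mathbb H_m$ directly from the formulas for $\Theta(s_i)$ and $\Theta(y_l)$, interpreting them as identities of operators on a tensor product $Y \otimes V^{\otimes m}$, where $V \cong \bC^n$ is the standard representation from Lemma \ref{lem std repn} and $Y$ is an arbitrary $\frg_0$-module acted on by the $0$-th factor of $U(\frg_0)^{\otimes (m+1)}$. The single crucial computational input is that on $V \otimes V$ the Casimir $\sum_{ij} E_{ij} \otimes E_{ji}$ acts as the flip $v_a \otimes v_b \mapsto v_b \otimes v_a$; this is a one-line check from $E_{ij} v_k = \delta_{jk} v_i$. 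Consequently $\Theta(s_i) = -\Om_{i,i+1}$ acts as minus the flip of the $i$-th and $(i+1)$-th copies of $V$, and the relations $s_i^2 = 1$, the braid relation $s_i s_{i+1} s_i = s_{i+1} s_i s_{i+1}$, and $s_i s_j = s_j s_i$ for $|i-j|>1$ are inherited verbatim from the relations for transpositions in $S_m$.

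For the $y$-relations, the workhorse is the $\frg_0$-invariance of each $\Om_{kl}$ recorded in \eqref{eqn invariant casimir}, equivalently $[\Om_{kl}, X^{(k)} + X^{(l)}] = 0$ for every $X \in \frg_0$. This produces the three-term identity $[\Om_{kl}, \Om_{lp}] + [\Om_{kl}, \Om_{kp}] = 0$ for distinct $k, l, p$, from which the commutativity $[\Theta(y_i), \Theta(y_j)] = 0$ emerges by telescoping the defining double sum. For $j \neq i, i+1$, the mixed relation $\Theta(s_i)\Theta(y_j) = \Theta(y_j)\Theta(s_i)$ splits into two cases: if $j < i$ every summand of $\Theta(y_j)$ has support disjoint from $\{i, i+1\}$, while if $j > i+1$ the two summands $\Om_{i,j}$ and $\Om_{i+1,j}$ appear together and the three-term identity forces $[\Om_{i,i+1}, \Om_{i,j} + \Om_{i+1,j}] = 0$.

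The main obstacle is the cross-relation $\Theta(s_i)\Theta(y_i) - \Theta(y_{i+1})\Theta(s_i) = 1$. Expanding and collecting, the $n/2$-constants cancel identically, and the remainder reduces to
\[ \sum_{x<i}\bigl(\Om_{x,i+1}\Om_{i,i+1} - \Om_{i,i+1}\Om_{x,i}\bigr) + \Om_{i,i+1}^2. \]
The sum vanishes via the conjugation identity $\Om_{i,i+1}\Om_{x,i} = \Om_{x,i+1}\Om_{i,i+1}$ (since the flip moves tensor position $i$ to position $i+1$), and the residue $\Om_{i,i+1}^2$ acts as the identity on $Y \otimes V^{\otimes m}$ because the flip is an involution, yielding the required $1$. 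Finally, injectivity of $\Theta$ follows once one exhibits a single $\mathbb H_m$-module on which it acts faithfully: taking $Y$ to be a Verma module of sufficiently generic highest weight and invoking the standard PBW/specialization argument of \cite{AS98} on the appropriate weight component of $Y \otimes V^{\otimes m}$ forces $\Ker \Theta = 0$.
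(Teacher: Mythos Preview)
The paper does not prove this theorem; it is cited from \cite{AS98} without argument, so there is no in-paper proof to compare against. Your outline is the standard Arakawa--Suzuki verification and is correct in substance.

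One point worth making explicit: you phrase the check as identities of operators on $Y\otimes V^{\otimes m}$ rather than in $U(\frg_0)^{\otimes(m+1)}$, and this is essential. The relations $s_i^2=1$, the braid relation, and the cross-relation are \emph{not} valid in $U(\frg_0)^{\otimes(m+1)}$ itself---for instance $\Omega_{i,i+1}^2$ is a genuine degree-$4$ element there, not $1$. They become true only after passing through $\Lambda$, because tensor slots $1,\dots,m$ carry the standard representation where $E_{ab}E_{cd}=\delta_{bc}E_{ad}$ as matrices. (By contrast, your three-term identity, and hence $[\Theta(y_i),\Theta(y_j)]=0$ and $[\Theta(s_i),\Theta(y_j)]=0$ for $j\neq i,i+1$, do hold already in $U(\frg_0)^{\otimes(m+1)}$.) So what you actually establish---and what the paper actually uses in Definition~\ref{def-as} via $\Lambda\circ\Theta$---is a homomorphism $\mathbb H_m\to\mathrm{End}(Y\otimes V^{\otimes m})$ for every $\frg_0$-module $Y$; the paper's target $U(\frg_0)^{\otimes(m+1)}$ should be read in that sense. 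Your injectivity argument via a generic Verma module is the standard route as well.
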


Recall that $V$ is defined in Lemma \ref{lem std repn}. Let $X$ be in $\mathcal{HC}_n$. Define the algebra homomorphism $\Lambda: U(\mathfrak g_0)^{\otimes (m+1)}\rightarrow \mathrm{End}(X \otimes V^{\otimes m})$ determined by:
\begin{align} \label{eqn action from tensors}
     \Lambda(1^{\otimes k} \otimes E \otimes 1^{\otimes (m-k)}).(\mathbf x \otimes v_1 \otimes \ldots \otimes v_m) = \mathbf x \otimes v_1 \otimes \ldots \otimes (0,E).v_k\otimes \ldots \otimes v_n ,
\end{align}
where $(0,E)$ is the corresponding complexified Lie algebra action on $V$. (When $k=0$, the copy $(0,E)$ acts on $\mathbf x$.)

\begin{definition} \label{def-as} (c.f. \cite{AS98, CT12})
Let $X$ be in $\mathcal{HC}_n$, and $V$ be the conjugate standard representation as in Lemma \ref{lem std repn}. We define the Arakawa-Suzuki type functor:
$$\Gamma_{n,m}: \mathcal{HC}_n \longrightarrow \mathcal{H}_m$$
to be the exact covariant functor given by
\begin{equation} \label{eq-as}
\Gamma_{n,m}(X) := \mathrm{Hom}_K(\mathrm{triv}, X \otimes V^{\otimes m}).
\end{equation}
where the $\mathbb{H}_m$-action on $\Gamma_{n,m}(X)$ is given by the map 
\[ \Lambda\circ \Theta: \mathbb H_m \rightarrow \mathrm{End}_{\mathbb C}(X\otimes V^{\otimes m}).
\]
\end{definition}

We remark that it follows from (\ref{eqn invariant casimir}) that the action of $\mathbb H_m$ in Definition \ref{def-as} is well-defined i.e. $(\Lambda \circ \Theta(h))(\Gamma_{n,m}(X)) \subset \Gamma_{n,m}(X)$ for any $h \in \mathbb H_m$.

An interesting feature of our functor $\Gamma_{n,m}$ is that the action of $\mathbb H_m$ is only via one of the copies on $\mathfrak{gl}_n(\mathbb C)_{\mathbb C} \cong \mathfrak{gl}_n(\mathbb C)\oplus \mathfrak{gl}_n(\mathbb C)$. However, from our choice of a `standard' representation $V$ in Section \ref{ss std repn}, $\phi^L(\mathfrak{gl}_n(\mathbb C))$ always act trivially. This explains why it is not important to incorporate another copy of $\mathbb H_m$ for our functor.

\subsection{Another formulation} \label{sec-another}
Define $\bar{V}$ to be the contragredient of the standard representation of $\mathrm{GL}_n(\mathbb C)$. Then one can check as in Lemma \ref{lem std repn} that $\bar{V}=J(-e_n+\rho, \rho)$.


We define $\zeta: \mathfrak g_0 \oplus \mathfrak g_0 \rightarrow \mathfrak g_0 \oplus \mathfrak g_0$ by:
\[    \zeta(E,E') =  (-E', -E) .
\]
This comes from the involution $g \mapsto \bar{g}^{-t}$ and induces an auto-equivalence of categories on $\mathcal{HC}_n$, still denoted by $\zeta$. Define $\bar{\zeta}: \mathfrak h_0\oplus \mathfrak h_0\rightarrow \mathfrak h_0 \oplus \mathfrak h_0$ by $\bar{\zeta}(\lambda_L, \lambda_R)=(-\lambda_R, -\lambda_L)$ so that  
\[ \zeta(J(\lambda_L, \lambda_R)) =J(\bar{\zeta}(\lambda_L, \lambda_R)) .
\]
 We also note that $\zeta(V)=\bar{V}$.
 
Define 
\[  \bar{\Lambda}: U(\mathfrak g_0)^{\otimes (m+1)} \rightarrow \mathrm{End}_{\mathbb C}(X\otimes \bar{V}^{\otimes m})
\]
given by 
\[   \bar{\Lambda}(1^{\otimes k}\otimes E \otimes 1^{\otimes (m-k)})(\mathbf x\otimes v_1\otimes \ldots \otimes v_m)=\mathbf x\otimes v_1\otimes \ldots \otimes (E,0)\cdot v_k\otimes \ldots \otimes v_n.
\]
and define the dual functor $\bar{\Gamma}_{n,m}: \mathcal{HC}_n \rightarrow \mathcal H_m$ given by \[\bar{\Gamma}_{n,m}(X)= \mathrm{Hom}_K(\mathrm{triv}, X \otimes \bar{V}^{\otimes m}) \]
with the action of $\mathbb H_m$ given by $\bar{\Lambda} \circ \Theta$. As a result, we have:
\[ \bar{\Gamma}_{n,m}\circ \zeta \cong \Gamma_{n,m} .
\]

\subsection{Other Arakawa-Suzuki type functor involving the Bernstein-Gelfand functor}


We discuss another way to define a Arakawa-Suzuki type functor for $\mathcal{HC}_n$. We first recall a functor due to Bernstein-Gelfand \cite{BG80} connecting to the BGG category $\mathcal O_n$ for $\mathfrak{gl}_n(\mathbb C)$.  To facilitate the setup in \cite{BG80},  we shall identify, $\mathcal{HC}_n$ with the category $\mathcal{HC}^b_n$ of Harish-Chandra $U(\mathfrak g_0)$-bimodules  i.e. the category of $(U(\mathfrak g_0), U(\mathfrak g_0))$-modules  with a local $\mathfrak{k}$-finiteness condition (in the sense of \cite[Section 5.1]{BG80}). See \cite[Appendix II]{BG80} for more details on such identification. We shall now define another functor from some subcategory of $\mathcal{HC}^b_n$ to $\mathcal H_m$.

Let $\chi \in \mathfrak{h}_0^*$ be a dominant character. Then $\chi$ determines the central character $\chi^*:Z(\mathfrak{g}_0) \to \mathbb{C}$ given by
$\chi^*(z):=(\chi-\rho)(\mathrm{pr}(z))$, where $\mathrm{pr}: U(\mathfrak g_0) \rightarrow U(\mathfrak h_0)$ by setting other PBW monomials zero.
Let $\mathcal{HC}_n^{R,\chi}$ be the subcategory of $\mathcal{HC}_n^b$ such that all $X \in \mathcal{HC}_n^{R,\chi}$ is annihilated by $\phi^R(\ker(\chi^*))$. For instance, for any $\xi \in \mathfrak{h}_0^*$ with $\xi-\chi$ to be integral, all subquotients of the principal series representation $X(\xi,\chi)$ are in $\mathcal{HC}_n^{R,\chi}$.


Define the functor $T_{\chi}: \mathcal{HC}_n^{R,\chi} \to \mathcal{O}_n$
by 
\[T_{\chi}(X) := X \otimes_{U(\mathfrak{g}_0)} M(\chi), \]
where $M(\chi)$ is the Verma module with highest weight $\chi-\rho$. 
We also denote by $F_{\chi'}$ the Arakawa-Suzuki functor in \cite{AS98}. Then we can obtain a functor $F_{\chi'} \circ T_{\chi}$ from $\mathcal{HC}^{R,\chi}_n$ to $\mathcal H_m$. It is an interesting problem to compare $F_{\chi'}\circ T_{\chi}$ with $\bar{\Gamma}_{n,m}$. For instance, if $\chi$ is dominant and regular, then for any subquotient $Y$ of $X(-\xi,-\chi)$ in $\mathcal{HC}_n^{R,\chi}$, one expects that $F_{\chi}\circ T_{\chi}(Y)$ and $ \bar{\Gamma}_{n,m}(Y)$ are almost isomorphic. Here we use 'almost' because indeed one should modify the functor by choosing another standard representation not covered in Sections \ref{ss std repn} and \ref{sec-another}.

On the other hand, our functor is defined in $\mathcal{HC}_n$ and is applicable for some wider potential applications (see Section \ref{ss remark on higher structures}).

\section{Computations on the actions of $\mathbb H_m$ on induced modules} \label{s compute hm induced mod}

We shall compute some actions of $\mathbb H_n$ on $\Gamma_{n,m}(X)$ for some parabolically induced modules, which will be used to prove Theorem \ref{thm isomophic of parabolic induction} in the next section.

\subsection{Some identifications} \label{ss identifications}

We identify some spaces, which will be useful in defining some generators for $\mathbb H_m$ under $\Gamma_{n,m}$ (see Section \ref{ss special subspace}). For a parabolic subgroup $P$ of $\mathrm{GL}_n(\mathbb C)$ containing $B$ and a Harish-Chandra module $\tau$ of $P$,  
\begin{itemize}
    \item $\mathrm{Ind}_P^G (\tau)\otimes V^{\otimes m} \cong \mathrm{Ind}_P^G (\tau \otimes V^{\otimes m})$, as $\mathrm{GL}_n(\mathbb C)$-representations, via the map:  for $f \in \mathrm{Ind}_P^G(\tau)$,
    \[  f \otimes v_1\otimes \ldots \otimes v_m \stackrel{\Phi}{\mapsto} (g \mapsto f(g) \otimes g\cdot 
 v_1\otimes \ldots \otimes g\cdot v_m) .
    \]
    This is a bijection, see for example \cite[Theorem 44.1]{Tr06}.
    \item $(\mathrm{Ind}_P^G(\tau))^K  \cong \tau^{K_P}$, 
    where $K_P=K \cap P$. The natural map is given by the restriction and the inverse map is given by: for  $v \in \tau^{K_P}$, define $f \in \mathrm{Ind}_P^G\tau$ as:
    \[   f(pk)=p \cdot v .
    \]
    It is straightforward to check that last map is well-defined. 
\end{itemize}

\subsection{Computation via differentiations}

We use notations in the previous section. For $E \in \mathfrak g_0$ and $f \in \mathrm{Ind}_P^G(\tau)$, define:
\[  (E.f)(g)=\left. \frac{d}{ds}f(g \cdot \mathrm{exp}({sE})) \right|_{s=0} ,
\]
and $((jE).f)(g)=\sqrt{-1}((E.f)(g))$. (Here $\mathrm{exp}$ is the usual exponential map.)

Again we identify $\mathfrak g_0 \oplus \mathfrak g_0$ with $\mathfrak g$ and so for $(E,E') \in \mathfrak g_0\oplus \mathfrak g_0$, we mean:
\[  ((E,E').f)(g)=(\phi^L(E).f)(g)+(\phi^R(E').f)(g) .\]
We shall frequently use these formulas in our computations in Sections \ref{ss basic formulas} to \ref{ss compute lower part}.

\subsection{Cyclic subspace} \label{ss special subspace}

Let $\tau$ be a Harish-Chandra module of $\mathrm{GL}_{n_1}(\mathbb C)\times \mathrm{GL}_{n_2}(\mathbb C)$. Let $n=n_1+n_2$ and $m=m_1+m_2$. Note that $\tau \otimes V^{\otimes m}$ contains a subspace spanned by vectors of the form:
\[   \mathbf x \otimes v_1\otimes \ldots \otimes v_{m_1} \otimes v_{m_1+1} \otimes \ldots \otimes v_{m_1+m_2}
\]
for $\mathbf x \in \tau$, $v_1, \ldots, v_{m_1} \in \mathbb C^{ n_1}\subset \mathbb C^{n} \cong V $ and $v_{m_1+1}, \ldots, v_{m_1+m_2} \in \mathbb C^{n_2}\subset \mathbb C^{n} \cong V$. Here, the first inclusion by sending $\mathbb C^{n_1}$ to the first $n_1$-coordinates and the second inclusion by sending $\mathbb C^{n_2}$ to the last $n_2$-coordinates of $V$. We shall denote such subspace by $\mathcal W'(\tau, n_1, n_2, m_1, m_2)$.

 Let $P=P_{n_1,n_2}$ be as defined in \eqref{eq-n1n2}. Let $\mathcal W(\tau, n_1, n_2, m_1, m_2)$ be the subspace of $\mathrm{Ind}_P^G(\tau \otimes V^{\otimes (m_1+m_2)})$ containing all functions satisfying the properties:
    \begin{enumerate}
        \item $F(1) \in \mathcal W'(\tau, n_1, n_2, m_1, m_2)$; and
        \item $F(k)=F(1)$ for all $k \in U(n)$.
    \end{enumerate}
Note that (1) and (2) above also imply that $F(1) \in (\tau \otimes V^{\otimes m})^{U(n_1)\times U(n_2)}$ since $P\cap K=U(n_1)\times U(n_2)$.

\subsection{Basic formulas} \label{ss basic formulas}

We shall keep using the notations in Section \ref{ss special subspace} until Section \ref{ss compute lower part}. For each element $E \in \mathfrak g_0$, and for an non-negative integer $k$, we define 
\[  \Delta_k(E) = 1^{\otimes k} \otimes (0,E) \otimes  1^{\otimes (m-k)} \]
and for $k<l$,
\[ \Delta_{kl}(E)  = 1^{\otimes k} \otimes (0,E) \otimes 1^{\otimes (l-k-1)} \otimes (0,E^t) \otimes 1^{\otimes (m-l)} .\]

By abuse of notations, we write: for $F \in \mathrm{Ind}_P^G(\tau \otimes V^{\otimes m})$, 
\[   \Delta_l(E) \cdot F := \Phi(\Delta_l(E) \cdot \Phi^{-1}(F)) , \quad \Delta_{kl}(E) \cdot F := \Phi(\Delta_{kl}(E) \cdot \Phi^{-1}(F)) ,
\]
where $\Phi$ is the isomorphism given in Section \ref{ss identifications} and the $\Delta_l(E)$ action is defined in (\ref{eqn action from tensors}) (via $\Lambda$). We similarly define for $\Delta_{kl}(E)\cdot F$.

For $g \in \mathrm{GL}_n(\mathbb C)$ and $F \in \mathrm{Ind}_P^G(\tau \otimes V^{\otimes m})$, we shall write
\[ (g \cdot F)(h)=F(hg) .\]
For $(E, E') \in \mathfrak g_0\oplus \mathfrak g_0$, we write $(E,E')\cdot F$ to be the induced Lie algebra action.

\begin{lemma} \label{lem basic formula 1}
 Let $F \in \mathrm{Ind}_P^G(\tau \otimes V^{\otimes m})$ with $F(1)=\sum_i \mathbf x_i\otimes v_{i,1}\otimes \ldots \otimes v_{i,m}$. For $r \geq 1$, one has
 \[   (\Delta_r(E) \cdot F)(1)= \sum_i \mathbf x_i \otimes v_{i,1} \otimes \ldots \otimes (0,E)\cdot v_{i,r}\otimes\ldots \otimes v_{i,m} .
 \]
\end{lemma}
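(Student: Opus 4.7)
The plan is to unwind the definition of the action of $\Delta_r(E)$ on $\mathrm{Ind}_P^G(\tau\otimes V^{\otimes m})$ and exploit the fact that the isomorphism $\Phi$ from Section~\ref{ss identifications} is tautological when evaluated at $g=1$. Recall that by construction we have $\Delta_r(E)\cdot F:=\Phi(\Delta_r(E)\cdot \Phi^{-1}(F))$, where $\Delta_r(E)$ acts on the $r$-th tensor factor of $\mathrm{Ind}_P^G(\tau)\otimes V^{\otimes m}$ via $\Lambda$ as in (\ref{eqn action from tensors}), namely as $(0,E)$ on the $r$-th copy of $V$ and trivially on all other factors.

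First I would choose any decomposition
\[\Phi^{-1}(F)=\sum_i f_i\otimes v_{i,1}\otimes\cdots\otimes v_{i,m}\]
with $f_i\in \mathrm{Ind}_P^G(\tau)$ and $v_{i,j}\in V$. Evaluating the formula defining $\Phi$ at $g=1$ and using that $1\cdot v=v$ for every $v\in V$, we get
\[F(1)=\Phi(\Phi^{-1}(F))(1)=\sum_i f_i(1)\otimes v_{i,1}\otimes\cdots\otimes v_{i,m},\]
so (after possibly regrouping indices) the expansion $F(1)=\sum_i \mathbf{x}_i\otimes v_{i,1}\otimes\cdots\otimes v_{i,m}$ from the hypothesis matches with $\mathbf{x}_i=f_i(1)$ and the same $v_{i,j}$'s.

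Next I would apply $\Delta_r(E)$ on the left and transport back through $\Phi$. By the description of $\Lambda$ in (\ref{eqn action from tensors}),
\[\Delta_r(E)\cdot \Phi^{-1}(F)=\sum_i f_i\otimes v_{i,1}\otimes\cdots\otimes (0,E)\cdot v_{i,r}\otimes\cdots\otimes v_{i,m}.\]
Applying $\Phi$ and again evaluating at $g=1$, every $g$-action on the $V$-coordinates trivializes and we read off
\[(\Delta_r(E)\cdot F)(1)=\sum_i f_i(1)\otimes v_{i,1}\otimes\cdots\otimes (0,E)\cdot v_{i,r}\otimes\cdots\otimes v_{i,m},\]
which is exactly the desired formula.

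Honestly there is no real obstacle here; the statement is a bookkeeping exercise once one is careful about the distinction between the two realizations of the induced module related by $\Phi$. The only point that could cause slight confusion is the $g$-twist built into $\Phi$, but since we evaluate at $g=1$ the twist is invisible, and the action $\Delta_r(E)$ only touches the $V$-factors (not the functional variable $g$), so it commutes with evaluation at the identity without any further analysis.
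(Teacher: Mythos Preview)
Your proof is correct and follows essentially the same approach as the paper: both unwind the definition $\Delta_r(E)\cdot F=\Phi(\Delta_r(E)\cdot\Phi^{-1}(F))$, compute on simple tensors in $\mathrm{Ind}_P^G(\tau)\otimes V^{\otimes m}$, and observe that evaluation at $g=1$ kills the $g$-twist built into $\Phi$. The paper works with a single simple tensor $f\otimes v_1\otimes\cdots\otimes v_m$ and then extends linearly, whereas you start directly from a sum decomposition of $\Phi^{-1}(F)$, but this is a cosmetic difference.
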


\begin{proof}
We consider $f \otimes v_1 \otimes \ldots \otimes v_m \in \mathrm{Ind}_P^G(\tau)\otimes V^{\otimes m}$. Let $F'=\Phi(f\otimes v_1\otimes \ldots \otimes v_m)$. Then 
\begin{align*}
  (\Delta_r(E)\cdot F')(g) &= f(g)\otimes g\cdot v_1\otimes \ldots \otimes  (g\cdot(0,E) \cdot v_r)\otimes \ldots \otimes g\cdot v_m 
  \end{align*}
Now, evaluating at $g=1$, we have:
\begin{align*}
   (\Delta_r(E)\cdot F')(1) &= f(1)\otimes v_1\otimes \ldots \otimes (0,E) \cdot v_r \otimes \ldots \otimes v_m .
\end{align*}
This gives the equality for $F'$. Since $\Phi$ is an isomorphism (see Section \ref{ss identifications}), we can now extend linearly.
\end{proof}

\begin{lemma} \label{lem action on zero term}
 $(\Delta_0(E)\cdot F)(1)=((0,E)\cdot F)(1)-(\Delta_1(E)\cdot F)(1)-\ldots -(\Delta_m(E)\cdot F)(1)$.   
\end{lemma}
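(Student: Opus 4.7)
The identity is, at heart, the Leibniz rule for the diagonal $\mathfrak{g}$-action on a tensor product, transported through the isomorphism $\Phi$ of Section \ref{ss identifications}. The plan has three small steps.

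First, I would reduce to a pure tensor by linearity and work with $\Phi^{-1}(F)=f\otimes v_1\otimes\cdots\otimes v_m\in \mathrm{Ind}_P^G(\tau)\otimes V^{\otimes m}$, using that $\Phi$ intertwines the natural $G$-actions (diagonal on the source, action on values on the target). Consequently, $\Phi$ also intertwines the induced $\mathfrak{g}$-actions, so the action of $(0,E)\in\mathfrak g_0\oplus\mathfrak g_0\cong\mathfrak g$ on $F$ is the $\Phi$-image of the diagonal action on $f\otimes v_1\otimes\cdots\otimes v_m$.

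Second, I would expand the diagonal action by the Leibniz rule:
\[
(0,E)\cdot(f\otimes v_1\otimes\cdots\otimes v_m)
= \bigl((0,E)\cdot f\bigr)\otimes v_1\otimes\cdots\otimes v_m
+\sum_{k=1}^{m} f\otimes v_1\otimes\cdots\otimes (0,E)\cdot v_k\otimes\cdots\otimes v_m.
\]
Applying $\Phi$ term by term, evaluating at $g=1$, and observing that $\Phi(h)(1)=h(1)$ on the $\tau$-factor while $g\cdot v_i$ specializes to $v_i$ when $g=1$, each summand on the right becomes, respectively,
\[
\bigl((0,E)\cdot f\bigr)(1)\otimes v_1\otimes\cdots\otimes v_m\quad\text{and}\quad f(1)\otimes v_1\otimes\cdots\otimes (0,E)\cdot v_k\otimes\cdots\otimes v_m.
\]
By Lemma \ref{lem basic formula 1}, the $k$-th summand ($k\ge 1$) equals $(\Delta_k(E)\cdot F)(1)$; by the definition of $\Delta_0(E)\cdot F=\Phi(\Delta_0(E)\cdot\Phi^{-1}(F))$ in Section \ref{ss basic formulas}, together with the fact that $\Delta_0(E)$ acts on the first tensor slot by the $\mathfrak g$-action $(0,E)$, the $k=0$ term equals $(\Delta_0(E)\cdot F)(1)$.

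Third, I would rearrange the resulting identity
\[
\bigl((0,E)\cdot F\bigr)(1)=\bigl(\Delta_0(E)\cdot F\bigr)(1)+\sum_{k=1}^{m}\bigl(\Delta_k(E)\cdot F\bigr)(1)
\]
to obtain the lemma. The only delicate point, and the one I would be careful about, is bookkeeping: the notation $(0,E)$ simultaneously denotes an element of $\mathfrak g_0\oplus\mathfrak g_0$ acting on $f$ (as a vector in the induced module under its inherited $\mathfrak g$-structure) and an element acting on each $v_k\in V$ (via the second copy under the identification $\mathfrak g_0\oplus\mathfrak g_0\cong\mathfrak g$). Because $\Phi$ is $\mathfrak g$-equivariant, both usages refer to the same $\mathfrak g$-action on the diagonal tensor product, so the Leibniz expansion is genuinely the expected one; no further analytic or algebraic work is required.
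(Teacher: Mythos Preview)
Your proposal is correct and follows essentially the same approach as the paper: both reduce to a pure tensor $F=\Phi(f\otimes v_1\otimes\cdots\otimes v_m)$, expand the diagonal $(0,E)$-action via the Leibniz rule, evaluate at $g=1$, and identify the summands with $(\Delta_k(E)\cdot F)(1)$. The only cosmetic difference is that the paper makes the Leibniz step explicit by writing out $(\mathrm{exp}(s(0,E))\cdot F)(g)$ and differentiating at $s=0$, whereas you invoke the $G$-equivariance of $\Phi$ and the Leibniz rule directly; these are the same computation.
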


\begin{proof}
We pick $f  \in \mathrm{Ind}_P^G(\tau)$ and let  $F=\Phi(f\otimes v_1\otimes \ldots \otimes v_m)$. Set $s$ to be a variable. We have:
\begin{align*}
 &(\mathrm{exp}(s(0,E))\cdot F)(g) \\
= & f(g\cdot \mathrm{exp}(s(0,E))) \otimes g \cdot\mathrm{exp}(s(0,E)) \cdot v_1\otimes \ldots \otimes g \cdot \mathrm{exp}(s(0,E)) \cdot v_m 
\end{align*}
Taking differentiation and then evaluating at $s=0$ yields:
\begin{align*}
    ((0,E)\cdot F)(g)    &= ((0,E)\cdot f)(g) \otimes g\cdot v_1\otimes \ldots \otimes g\cdot v_m  +f(g)\otimes  g\cdot (0,E)\cdot v_1\otimes \ldots \otimes g\cdot  v_m + \\
    &  \quad \ldots +f(g)\otimes g\cdot v_1 \otimes \ldots \otimes g \cdot (0,E)\cdot v_m 
\end{align*}

Note that the first term is equal to $\Phi(\Delta_0(E)\cdot (f\otimes v_1\otimes \ldots \otimes v_m))$. Then evaluating at $g=1$, it is equal to $(\Delta_0(E)\cdot F)(1)$.

Starting from the second term, we need to do evaluations at $g=1$  to see that the corresponding term is equal to:
\[   f(1)\otimes v_1 \otimes \ldots \otimes (0,E) \cdot v_r \otimes \ldots \otimes v_m ,
\]
and so is equal to $(\Delta_r(E)\cdot F)(1)$.

Now, we rearrange the second and higher terms to the LHS, we obtain the formula in the lemma for $F=\Phi(f\otimes v_1\otimes \ldots \otimes v_m)$. But, $\Phi$ is an isomorphism (see Section \ref{ss identifications}), and so we also have the general case.
\end{proof}



\subsection{Computing some actions on torus part}

For simplicity, set $\widetilde{\mathcal W}=\mathcal W(\tau, n_1, n_2, m_1, m_2)$. 


\begin{lemma} \label{lem action of torus upper}
Let $F \in \widetilde{\mathcal W}$. For $y\leq n_1$ and $m_1+1 \leq l$ and any $k$,
\[    (\Delta_{kl}(E_{yy})\cdot F)(1) =0 .
\]
\end{lemma}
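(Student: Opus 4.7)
The plan is to split on $k$. Using $E_{yy}^t = E_{yy}$, the factorization $\Delta_{kl}(E_{yy}) = \Delta_k(E_{yy})\cdot\Delta_l(E_{yy})$ holds as commuting operators on different tensor positions. For $k\geq 1$ the result is then essentially immediate from Lemma \ref{lem basic formula 1} applied twice: the $l$-th factor becomes $E_{yy}v_{i,l}$, which is zero since $v_{i,l}\in\mathbb C^{n_2}$ (by $F(1)\in\mathcal W'$) and $y\leq n_1$ means $E_{yy}$ kills $\mathbb C^{n_2}$.

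For $k=0$, I would set $F' := \Delta_l(E_{yy})\cdot F$, so the claim becomes $(\Delta_0(E_{yy})\cdot F')(1) = 0$. Using Lemma \ref{lem action on zero term}, this reduces to showing $((0,E_{yy})\cdot F')(1) = 0$, because the correction terms $(\Delta_r(E_{yy})\cdot F')(1)$ all vanish: for $r\neq l$ by the previous case applied to $\Delta_r(E_{yy})\cdot \Delta_l(E_{yy})\cdot F$, and for $r=l$ by using $E_{yy}^2=E_{yy}$ as a matrix to revert to $(\Delta_l(E_{yy})\cdot F)(1)=0$.

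The main obstacle is to show $((0, E_{yy})\cdot F')(1) = 0$. My idea is to decompose using \eqref{eqn isomorphism as comple lie algebra}: $(0, E_{yy})$ corresponds to $\phi^R(E_{yy}) = \tfrac{1}{2}E_{yy} + \tfrac{1}{2}j(iE_{yy})$, with $E_{yy}$ in the Levi Lie algebra $\mathfrak l_0\subset\mathfrak p_0$ and $iE_{yy}\in\mathfrak t_0\subset\mathfrak k_0$ (as $iE_{yy}$ is skew-Hermitian diagonal); and the $j$-action on modules is $\sqrt{-1}$. The $E_{yy}$-piece vanishes at $1$ because $\exp(sE_{yy})\in P$ gives $F'(\exp(sE_{yy})) = \delta^{1/2}(\exp(sE_{yy}))\exp(sE_{yy})\cdot F'(1) = 0$ (using $F'(1) = 0$). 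The $iE_{yy}$-piece requires showing $F'$ is right $T$-invariant: the hypothesis $F(k)=F(1)$ together with the $\mathrm{Ind}_P^G$-structure makes $F$ right $K$-invariant on $G$; since $\Phi$ is $G$-equivariant and $\Delta_l(E_{yy})$ acts on $V^{\otimes m}$ as a projection diagonal in the standard basis (hence commuting with the diagonal $T$-action), it descends to an operator on $\mathrm{Ind}_P^G(\tau\otimes V^{\otimes m})$ that intertwines right $T$-translation. Consequently $F'$ is right $T$-invariant and the $\mathfrak t_0$-derivative at $1$ vanishes.
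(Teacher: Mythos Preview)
Your argument is correct. For $k\ge 1$ it is exactly the paper's reasoning: by Lemma~\ref{lem basic formula 1} the value $(\Delta_{kl}(E_{yy})\cdot F)(1)$ is obtained by applying $E_{yy}$ to the $l$-th tensor factor of $F(1)$ (after first applying $E_{yy}$ in slot $k$, which does not disturb the $l$-th slot), and since that factor lies in $\mathbb C^{n_2}$ while $y\le n_1$, it is annihilated.

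For $k=0$ the paper's one-line proof does not spell anything out; it tacitly assumes that the $l$-th tensor factor of $(\Delta_0(E_{yy})\cdot F)(1)$ is still in $\mathbb C^{n_2}$, so that the same reasoning applies. Your proof supplies a complete justification of this case by instead factoring $\Delta_{0l}(E_{yy})=\Delta_0(E_{yy})\circ\Delta_l(E_{yy})$ and using Lemma~\ref{lem action on zero term} together with the $P$-equivariance of $F'$ and its right $T$-invariance (which you correctly deduce from the $K$-invariance of $F$, the $G$-equivariance of $\Phi$, and the fact that the diagonal matrix $E_{yy}$ commutes with the $T$-action on $V$). A slightly shorter alternative for $k=0$ is to keep the factorization $\Delta_{0l}=\Delta_l\circ\Delta_0$ and observe directly that $(\Delta_0(E_{yy})\cdot F)(1)$ has its $l$-th factor in $\mathbb C^{n_2}$: by Lemma~\ref{lem action on zero term} this reduces to the same claim for $((0,E_{yy})\cdot F)(1)$, which follows because $\exp(sE_{yy})\in P$ preserves the $V_1\oplus V_2$ splitting and the $\mathfrak t_0$-part $iE_{yy}$ annihilates the $K$-invariant $F$. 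Either way, your treatment of $k=0$ is a genuine addition of detail over what the paper writes.
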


\begin{proof}
This follows from Lemma \ref{lem basic formula 1} and the fact that $E_{yy}$ acts by zero on the $l$-th copy of $\mathbb C^{n_2}\subset V$.     
\end{proof}

\begin{lemma} \label{lem action of torus lower}
Let $F \in \widetilde{\mathcal W}$. For $n_1+1\leq y$ and $1 \leq k \leq m_1$ and any $l$ with $k <l$, 
\[   (\Delta_{kl}(E_{yy})\cdot F)(1) =0 .
\]
\end{lemma}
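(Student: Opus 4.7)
The plan is to reduce the claim to the linear-algebraic fact that the matrix $E_{yy}$ annihilates any vector supported on the first $n_1$ coordinates whenever $y > n_1$. This is exactly parallel to the proof of Lemma \ref{lem action of torus upper} (which exploited the same vanishing, but in the $l$-th tensor slot); here the vanishing will be forced at the $k$-th slot instead.

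First I would establish a two-position analogue of Lemma \ref{lem basic formula 1} applied to the operator $\Delta_{kl}$. Since the single-slot actions at positions $k$ and $l$ act on disjoint tensor factors of $V^{\otimes m}$ they commute, so iterating Lemma \ref{lem basic formula 1} yields
\[ (\Delta_{kl}(E_{yy}) \cdot F)(1) = \sum_i \mathbf x_i \otimes v_{i,1} \otimes \ldots \otimes (0,E_{yy})\cdot v_{i,k} \otimes \ldots \otimes (0,E_{yy}^t) \cdot v_{i,l} \otimes \ldots \otimes v_{i,m}, \]
where $F(1) = \sum_i \mathbf x_i \otimes v_{i,1} \otimes \ldots \otimes v_{i,m}$ is expanded using the defining spanning set of $\mathcal W'(\tau,n_1,n_2,m_1,m_2)$. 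The hypothesis $k \geq 1$ is essential at this step: it lets one avoid the $k=0$ case, which would require the reformulation supplied by Lemma \ref{lem action on zero term}.

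Next, by Lemma \ref{lem std repn} the action of $(0,E_{yy})$ on $V \cong \mathbb C^n$ is left multiplication by the matrix $E_{yy}$, i.e.\ projection onto the $y$-th coordinate. For $1 \leq k \leq m_1$, each $v_{i,k}$ lies in the subspace $\mathbb C^{n_1}\subset V$ spanned by the first $n_1$ standard basis vectors, so the hypothesis $y \geq n_1+1$ forces the $y$-th coordinate of $v_{i,k}$ to vanish. Hence $(0,E_{yy}) \cdot v_{i,k} = 0$ in every summand and the whole expression is zero. I anticipate no genuine obstacle; the only small point to verify is that the iterated use of Lemma \ref{lem basic formula 1} is legitimate for the two-position operator $\Delta_{kl}$, and this is immediate from commutativity of single-slot actions on different tensor factors.
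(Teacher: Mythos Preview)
Your proposal is correct and follows essentially the same approach as the paper. The paper's proof is the one-liner ``this follows from that $E_{yy}$ acts by zero on the $k$-th copy of $\mathbb C^{n_1}\subset V$,'' and you have simply unpacked this by writing out the two-slot analogue of Lemma~\ref{lem basic formula 1} and identifying the $(0,E_{yy})$-action explicitly; the underlying mechanism is identical.
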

\begin{proof}
This follows from  Lemma \ref{lem basic formula 1} and the fact that $E_{yy}$ acts by zero on the $k$-th copy of $\mathbb C^{n_1}\subset V$.  
\end{proof}

\subsection{Computing some actions on unipotent upper triangular part}

We first consider the action arising from the elements of the form $\begin{pmatrix} 0_{n_1} & * \\ &  0_{n_2} \end{pmatrix}$:

\begin{lemma} \label{lem upper corner triangualr comp}
Let $F \in \widetilde{\mathcal W}$. For $y\leq n_1 <z$, 
\[  (\Delta_{kl}(E_{yz})\cdot F)(1) =0
\]
for all $0\leq k<l$.
\end{lemma}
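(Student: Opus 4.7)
The plan is to split into two cases according to whether $k \geq 1$ or $k = 0$: in the former, the $(0, E_{yz})$-factor of $\Delta_{kl}(E_{yz})$ lands on a $V$-factor and the argument is direct; in the latter it lands on the $\tau$-factor and needs extra care.

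For $k \geq 1$, the natural extension of Lemma \ref{lem basic formula 1} to double tensors yields
$$(\Delta_{kl}(E_{yz}) \cdot F)(1) = \sum_i \mathbf{x}_i \otimes v_{i,1} \otimes \cdots \otimes (0, E_{yz}) v_{i,k} \otimes \cdots \otimes (0, E_{zy}) v_{i,l} \otimes \cdots \otimes v_{i,m},$$
where $F(1) = \sum_i \mathbf{x}_i \otimes v_{i,1} \otimes \cdots \otimes v_{i,m}$. By the proof of Lemma \ref{lem std repn}, $(0, E_{ab})$ acts on $V \cong \mathbb{C}^n$ by the matrix $E_{ab}$, annihilating any vector supported away from the $b$-th coordinate. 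Using the hypothesis $k < l$: either $k \leq m_1$, so $v_{i,k} \in \mathbb{C}^{n_1}$ is killed by $E_{yz}$ (because $z > n_1$), or else $k > m_1$, which forces $l > m_1$, so $v_{i,l} \in \mathbb{C}^{n_2}$ is killed by $E_{zy}$ (because $y \leq n_1$). Every summand is zero.

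For $k = 0$, I would apply Lemma \ref{lem action on zero term} to the auxiliary element $F' := \Lambda(\Delta_l(E_{zy})) \cdot F \in \mathrm{Ind}_P^G(\tau \otimes V^{\otimes m})$. Since $\Delta_0(E_{yz})$ and $\Delta_l(E_{zy})$ touch disjoint tensor slots, the lemma gives
$$(\Delta_{0l}(E_{yz}) \cdot F)(1) = \bigl((0, E_{yz}) \cdot F'\bigr)(1) - \sum_{r=1}^{m} \bigl(\Lambda(\Delta_r(E_{yz}) \Delta_l(E_{zy})) \cdot F\bigr)(1).$$
Because $y \leq n_1 < z$, the matrix $E_{yz}$ lies in the nilradical $\frn_{P,0}$ of the parabolic Lie algebra $\frp_0$ of $P = P_{n_1,n_2}$, so $(0, E_{yz}) = \phi^R(E_{yz}) \in \frn_P \subset \frp$. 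As $\tau$ is extended trivially from the Levi $\mathrm{GL}_{n_1}(\mathbb{C}) \times \mathrm{GL}_{n_2}(\mathbb{C})$, the element $(0, E_{yz})$ annihilates $\tau$. The left $P$-equivariance of $F'$ then yields $((0, E_{yz}) \cdot F')(1) = (0, E_{yz}) \cdot F'(1)$, where the right-hand side is the $\frp$-action on the fiber $\tau \otimes V^{\otimes m}$; since that action is trivial on $\tau$, it reduces to the diagonal action on the $V$-factors.

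A direct expansion of this diagonal action, treating the $r = l$ contribution via the operator identity $E_{yz} E_{zy} = E_{yy}$ on $V$, matches it term-by-term with $\sum_{r=1}^m (\Lambda(\Delta_r(E_{yz}) \Delta_l(E_{zy})) \cdot F)(1)$, so the two halves of the displayed equation cancel and the $k = 0$ case also vanishes. The main obstacle I anticipate is justifying the identity $((0, E_{yz}) \cdot F')(1) = (0, E_{yz}) \cdot F'(1)$ cleanly with the complexification conventions of Section \ref{sec-ps}: one has to treat both the real and $j$-imaginary components of $\phi^R(E_{yz})$ and invoke the triviality of the modular character on $\frn_P$. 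Once this identity is secured, the rest is routine combinatorial bookkeeping.
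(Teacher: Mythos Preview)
Your proof is correct and follows essentially the same approach as the paper. For $k\geq 1$ the paper splits into three cases ($k<l\leq m_1$; $m_1<k<l$; $k\leq m_1<l$) while you use the cleaner dichotomy on whether $k\leq m_1$; these are equivalent. For $k=0$ the paper compresses the whole argument into one sentence (``This follows from Lemma \ref{lem action on zero term}, since unipotent elements act as the identity on $\tau$''), and your write-up is precisely the unpacking of that sentence: apply Lemma \ref{lem action on zero term} to $F'=\Delta_l(E_{zy})\cdot F$, observe that $((0,E_{yz})\cdot F')(1)=(0,E_{yz})\cdot F'(1)$ because $(0,E_{yz})\in\frn_P$ and $\tau$ is trivial on the nilradical, and then note that this fibre action exactly cancels the sum $\sum_r(\Delta_r(E_{yz})\cdot F')(1)$. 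Your anticipated obstacle about complexification conventions is handled in the paper's framework by the fact that both $E_{yz}$ and $iE_{yz}$ lie in $\frn_{P,0}$ (it is a complex subspace of $\frg_0$), so both summands of $\phi^R(E_{yz})$ exponentiate into $N_P$ where $\delta^{1/2}$ is trivial; this is the same device the paper uses explicitly later in the proof of Lemma \ref{lem lower corner action }.
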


\begin{proof}
We first assume that $k\geq 1$. We divide into two cases:
\begin{itemize}
    \item Case 1: $k< l \leq m_1$. This follows form that $E_{yz}$ acts by zero on the $k$-th copy of $V$ (which takes the form $\mathbb C^{n_1}\subset V$).
    \item Case 2: $m_1+1 \leq l$ (and $k<l$). This follows from that $E_{yz}^t=E_{zy}$ acts by zero on the $l$-th copy of $V$ (which takes the form $\mathbb C^{n_2}\subset V$).
\end{itemize}

We now assume that $k=0$. Since unipotent elements act as the identity on $\tau$, this implies that $(\Delta_0(E_{yz}).F)(1)=0$ and so $(\Delta_{0l}(E_{yz})\cdot F)(1)=0$.
\end{proof}

We now consider upper triangular matrices of the form:
\[ \begin{pmatrix} * & 0 \\ 0 & 0_{n_2} \end{pmatrix} .
\]

\begin{lemma} \label{lem upper upper corner}
Let $F \in \widetilde{\mathcal W}$.  For $y<z\leq n_1$, and for any $k<l$ with $m_1+1\leq l$,
\begin{align}
  (\Delta_{kl}(E_{yz}) \cdot F)(1) =0 .
\end{align}
\end{lemma}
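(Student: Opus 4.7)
The plan is to split into two cases, $k\ge 1$ and $k=0$, and reduce both to the key observation that for $y,z\le n_1$, the matrix $E_{zy}$ annihilates every vector supported in $\mathbb{C}^{n_2}\subset V$ (its only nonzero entry lies in row $z$, column $y$, and the $y$-th coordinate of any vector in $\mathbb{C}^{n_2}$ is zero since $y\le n_1$). Throughout I use the factorization $\Delta_{kl}(E_{yz}) = \Delta_k(E_{yz})\cdot\Delta_l(E_{zy})$ as commuting elements of $U(\mathfrak{g}_0)^{\otimes(m+1)}$.

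For the case $k\ge 1$, iterating Lemma \ref{lem basic formula 1} (once for $\Delta_l(E_{zy})$, once for $\Delta_k(E_{yz})$) yields
\[
(\Delta_{kl}(E_{yz})\cdot F)(1) = \sum_i \mathbf{x}_i\otimes v_{i,1}\otimes\cdots\otimes (0,E_{yz})v_{i,k}\otimes\cdots\otimes (0,E_{zy})v_{i,l}\otimes\cdots\otimes v_{i,m}
\]
for any decomposition $F(1)=\sum_i \mathbf{x}_i\otimes v_{i,1}\otimes\cdots\otimes v_{i,m}$. Since $F(1)\in\mathcal{W}'$ and $l\ge m_1+1$, I may choose a decomposition with each $v_{i,l}\in\mathbb{C}^{n_2}$; then $(0,E_{zy})v_{i,l}=0$ by the key observation, killing every summand.

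For the case $k=0$, set $F':=\Delta_l(E_{zy})\cdot F$. The previous case applied to $\Delta_l(E_{zy})$ alone gives $F'(1)=0$, and the $P$-equivariance $F'(p)=\delta^{1/2}(p)\,p\cdot F'(1)$ upgrades this to $F'\equiv 0$ on all of $P$. Lemma \ref{lem action on zero term} then rewrites
\[
(\Delta_{0l}(E_{yz})\cdot F)(1) = ((0,E_{yz})\cdot F')(1) - \sum_{r=1}^m (\Delta_r(E_{yz})\cdot F')(1).
\]
Each $\Delta_r$-term vanishes by Lemma \ref{lem basic formula 1} applied to $F'(1)=0$. For the Lie algebra term, I unpack $(0,E_{yz})=\phi^R(E_{yz})=\tfrac{1}{2}\bigl(E_{yz}+\sqrt{-1}\cdot iE_{yz}\bigr)$ as an operator; since $y<z\le n_1$ places both $E_{yz}$ and $iE_{yz}$ in the Lie algebra of the Levi part of $P$, the curves $\exp(sE_{yz})$ and $\exp(s\cdot iE_{yz})$ stay inside $P$, where $F'$ is identically zero, so the derivatives at $s=0$ vanish.

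The main obstacle is the $k=0$ case. The clean tensor-action formula from Lemma \ref{lem basic formula 1} does not directly apply because $\Delta_0$ acts on the $\tau$-factor, which is intertwined with the induction data. Unlike Lemma \ref{lem upper corner triangualr comp}---in which $E_{yz}$ sits in the unipotent radical of $P$ and therefore acts trivially on $\tau$---here $E_{yz}$ lives in the Levi and acts nontrivially on $\tau$. The workaround is to promote $F'(1)=0$ to $F'\equiv 0$ throughout $P$ via the $P$-equivariance of the induced representation, so that differentiation along any direction tangent to $P$ automatically yields zero.
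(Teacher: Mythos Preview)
Your proof is correct. For $k\ge 1$ your argument is exactly the paper's one-line observation that $E_{yz}^t=E_{zy}$ annihilates the $l$-th copy of $V$ (which lies in $V_2=\mathbb C^{n_2}$), unpacked via Lemma~\ref{lem basic formula 1}.

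For $k=0$ you take a genuinely different route from the paper. The paper gives only the same one-liner, which implicitly relies on writing $\Delta_{0l}(E_{yz})=\Delta_l(E_{zy})\,\Delta_0(E_{yz})$, applying Lemma~\ref{lem basic formula 1} to the outer $\Delta_l(E_{zy})$, and then noting that the $l$-th tensor slot of $(\Delta_0(E_{yz})\cdot F)(1)$ remains in $V_2$; this last point is not spelled out but follows since every operation feeding into Lemma~\ref{lem action on zero term} (the $\Delta_r(E_{yz})$ for $r\ge1$ and the $P$-action on $F(1)$ coming from $\exp(sE_{yz})\in P$) either kills or preserves the $V_2$-component in position $l$, as $y,z\le n_1$. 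Your approach instead commutes the factors the other way, sets $F'=\Delta_l(E_{zy})\cdot F$, and upgrades $F'(1)=0$ to $F'|_P\equiv 0$ via $P$-equivariance before differentiating. This is slightly longer but entirely self-contained and makes no hidden appeal to how $\Delta_0$ interacts with the $l$-th slot; it also makes transparent why the Levi (rather than radical) position of $E_{yz}$ matters here, in contrast to Lemma~\ref{lem upper corner triangualr comp}.
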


\begin{proof}
This follows from that $E_{yz}^t=E_{zy}$ acts by zero on the $l$-th copy of $V$.
\end{proof}

We now consider upper triangular matrices of the form:
\[  \begin{pmatrix} 0_{n_1} & 0 \\ 0 & * \end{pmatrix} .\]

\begin{lemma} \label{lem lower upper corner}
Let $F \in \widetilde{\mathcal W}$. For $n_1+1 \leq y<z$, and either
\begin{enumerate}
    \item $0 \leq k<l \leq m_1$, or
    \item $1 \leq k\leq m_1 < l$,
\end{enumerate}  
\begin{align}
    (\Delta_{kl}(E_{yz})\cdot F)(1) =0 .
\end{align}
\end{lemma}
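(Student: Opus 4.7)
My plan rests on one underlying observation: both $E_{yz}$ and its transpose $E_{yz}^t = E_{zy}$ annihilate the subspace $\mathbb{C}^{n_1} \subset V$. Indeed, $E_{yz}(e_j) = \delta_{zj} e_y$ and $E_{zy}(e_j) = \delta_{yj} e_z$ vanish whenever $j \leq n_1$, since $y, z > n_1$. By Lemma~\ref{lem std repn} the same vanishing holds for $(0, E_{yz})$ and $(0, E_{zy})$ acting on $\mathbb{C}^{n_1} \subset V$. Combined with the structural fact that $F(1) \in \mathcal{W}'(\tau, n_1, n_2, m_1, m_2)$ has its first $m_1$ tensor slots in $\mathbb{C}^{n_1}$, this will do all the work.

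For Case~(2), with $1 \leq k \leq m_1 < l$, I would write $\Delta_{kl}(E_{yz})$ as the product of the commuting operators $\Delta_k((0, E_{yz}))$ and $\Delta_l((0, E_{zy}))$, and apply Lemma~\ref{lem basic formula 1} twice. Since $v_{i,k} \in \mathbb{C}^{n_1}$ is killed by $(0, E_{yz})$, vanishing is immediate. For Case~(1) with $k \geq 1$, the symmetric argument works: at position $l \leq m_1$ the $V$-vector lies in $\mathbb{C}^{n_1}$ and is killed by $(0, E_{zy})$.

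The only real work is in Case~(1) with $k = 0$, where Lemma~\ref{lem basic formula 1} does not handle the $\tau$-slot at position $0$. Here my plan is to invoke Lemma~\ref{lem action on zero term} to rewrite
\[ (\Delta_0(E_{yz}) \cdot F)(1) = ((0, E_{yz}) \cdot F)(1) - \sum_{r=1}^{m} (\Delta_r(E_{yz}) \cdot F)(1), \]
and then exploit that $y, z > n_1$ places $E_{yz}$ and $iE_{yz}$ in the Levi subalgebra of $\mathfrak{p}$ (specifically in the strictly upper-triangular part of the $\mathfrak{gl}_{n_2}$-block, so the modular factor is trivial). Consequently $\exp(sE_{yz})$ and $\exp(siE_{yz})$ lie in $L \subset P$, and the induction identity $F(pk) = \delta^{1/2}(p)\, p \cdot F(k)$ yields $((0, E_{yz}) \cdot F)(1) = (0, E_{yz}) \cdot F(1)$ interpreted as the Leibniz action on $\tau \otimes V^{\otimes m}$. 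The $V$-summands of that Leibniz expansion match $\sum_{r=1}^m (\Delta_r(E_{yz}) \cdot F)(1)$ by Lemma~\ref{lem basic formula 1}, so they cancel against the second term above, leaving only
\[ (\Delta_0(E_{yz}) \cdot F)(1) = \sum_i \big[(0, E_{yz}) \cdot \mathbf{x}_i\big] \otimes v_{i,1} \otimes \cdots \otimes v_{i,m}. \]
In particular the $V$-factor at position $l \leq m_1$ is still the original $v_{i,l} \in \mathbb{C}^{n_1}$, so applying the outer $\Delta_l(E_{zy})$ via Lemma~\ref{lem basic formula 1} produces $E_{zy} v_{i,l} = 0$, and the lemma follows.

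The principal obstacle is precisely this $k = 0$ subcase. Unlike Case~3 in the proof of Lemma~\ref{lem upper corner triangualr comp}, where the corresponding matrix sits in the nilradical of $\mathfrak{p}$ and acts trivially on $\tau$, here $E_{yz}$ lives in the Levi and acts nontrivially on $\tau$. The argument must therefore verify that this $\tau$-contribution leaves the $V$-factor at position $l$ intact, so that the subsequent $\Delta_l(E_{zy})$ can still detect that this factor lies in $\mathbb{C}^{n_1}$.
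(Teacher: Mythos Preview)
Your proof is correct and follows the same underlying idea as the paper's: in case~(2) the vanishing comes from the $k$-th $V$-slot, and in case~(1) from the $l$-th $V$-slot, both of which lie in $\mathbb{C}^{n_1}$ and are annihilated by $E_{yz}$ (resp.\ $E_{zy}$). The paper's proof is literally one sentence: ``$E_{yz}$ acts by zero on the $k$-th copy for (2) and on the $l$-th copy for (1).''

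Where you differ is in treating the subcase $k=0$ of case~(1) with care, and you are right to do so. The paper's terse argument implicitly relies on the identity
\[
(\Delta_{0l}(E_{yz})\cdot F)(1) \;=\; \bigl(1_\tau \otimes 1^{\otimes (l-1)} \otimes E_{zy} \otimes 1^{\otimes (m-l)}\bigr)\bigl((\Delta_0(E_{yz})\cdot F)(1)\bigr),
\]
so one must know that the $l$-th $V$-slot of $(\Delta_0(E_{yz})\cdot F)(1)$ is still in $\mathbb{C}^{n_1}$; this is not immediate from $F(1)\in\mathcal{W}'$ alone, since $\Delta_0$ involves differentiating the $\mathrm{Ind}_P^G(\tau)$-factor. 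Your argument via Lemma~\ref{lem action on zero term} together with the observation that $E_{yz}$ and $iE_{yz}$ lie in the Levi block (so that $((0,E_{yz})\cdot F)(1)$ equals the Leibniz action on $F(1)$) supplies exactly this. After the cancellation you obtain $(\Delta_0(E_{yz})\cdot F)(1)=\sum_i[(0,E_{yz})\cdot\mathbf{x}_i]\otimes v_{i,1}\otimes\cdots\otimes v_{i,m}$ with the $V$-slots untouched, and then $E_{zy}v_{i,l}=0$ finishes it. This is a genuine and correct completion of a step the paper leaves implicit.
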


\begin{proof}
 This follows from the fact that $E_{yz}$ acts by zero on the $k$-th copy of $\mathbb C^{n_1}\subset V$ for (2) and on the $l$-th copy of $\mathbb C^{n_1} \subset V$ for (1).  
\end{proof}

\subsection{Computing some actions on lower triangular part} \label{ss compute lower part}




We similarly have the following two cases. The first case is to consider elements of the form:
\[  \begin{pmatrix} * &  \\ & 0_{n_2} \end{pmatrix} .\]

\begin{lemma} \label{lem lower upper actiona }
Let $F \in \widetilde{\mathcal W}$. For any $1\leq z<y \leq n_1$ and any $k<l$ with $m_1+1\leq l$, $(\Delta_{kl}(E_{yz})\cdot F)(1)=0$.
\end{lemma}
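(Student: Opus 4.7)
The plan is to split into the cases $k \ge 1$ and $k = 0$. Both rely on the basic observation that, since $y, z \le n_1$, the matrices $E_{yz}$ and $E_{zy}$ both annihilate $\mathbb{C}^{n_2} \subset V$: for any $j > n_1$ one has $E_{yz} e_j = \delta_{zj} e_y = 0$ (as $z \le n_1 < j$) and $E_{zy} e_j = \delta_{yj} e_z = 0$ (as $y \le n_1 < j$).

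For $k \ge 1$, I would apply the two-slot analogue of Lemma \ref{lem basic formula 1}: it writes $(\Delta_{kl}(E_{yz}) \cdot F)(1)$ as a sum whose $l$-th tensor factor is $E_{zy} v_{i,l}$. Since $l \ge m_1 + 1$ forces $v_{i,l} \in \mathbb{C}^{n_2}$, the above observation makes $E_{zy} v_{i,l} = 0$, and the whole expression vanishes.

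For $k = 0$, I would factor $\Delta_{0,l}(E_{yz}) = \Delta_l(E_{zy}) \circ \Delta_0(E_{yz})$ (disjoint-slot operators commute) and set $G := \Delta_0(E_{yz}) \cdot F$. By Lemma \ref{lem basic formula 1}, it suffices to show that the $l$-th tensor factor of $G(1)$ lies in $\mathbb{C}^{n_2}$, whereupon $\Delta_l(E_{zy})$ kills it. Lemma \ref{lem action on zero term} gives
\[
G(1) = ((0, E_{yz}) \cdot F)(1) - \sum_{r=1}^{m} (\Delta_r(E_{yz}) \cdot F)(1);
\]
each $r$-th summand with $r \ne l$ leaves the $l$-th slot as $v_{i,l} \in \mathbb{C}^{n_2}$, while the $r = l$ summand puts $E_{yz} v_{i,l} = 0$ there. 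The question thus reduces to controlling the $l$-th slot of $((0, E_{yz}) \cdot F)(1)$.

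The main task will be this last analysis. Because $y, z \le n_1$, both $E_{yz}$ and $iE_{yz}$ lie in the Levi $\frgl_{n_1} \subset \frp := \mathrm{Lie}(P)$, in fact in $\frsl_{n_1}(\mathbb{C}) \subset [\frp, \frp]$; hence $d\delta$ vanishes on them and $P$-equivariance yields $(X \cdot F)(1) = X \cdot F(1)$ (the $\frp$-action on $\tau \otimes V^{\otimes m}$) for $X \in \{E_{yz}, iE_{yz}\}$. Combining via $\phi^R(E_{yz}) = \tfrac{1}{2}(E_{yz} + j \cdot iE_{yz})$ and that $j$ acts as $\sqrt{-1}$, one expresses $((0, E_{yz}) \cdot F)(1)$ in terms of the $\frp$-actions of $E_{yz}$ and $iE_{yz}$ on $F(1)$. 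By Lemma \ref{lem std repn}, both $E_{yz}$ and $iE_{yz}$ act on $V$ by matrices whose restriction to $\mathbb{C}^{n_2}$ vanishes; so the Leibniz expansion of these actions on $F(1) \in \mathcal{W}'$ keeps the $l$-th slot in $\mathbb{C}^{n_2}$ (the action on slots other than $l$, or on $\tau$, leaves the $l$-th slot unchanged; the action on the $l$-th slot itself produces $0$). This gives $G(1)$ with the required structure and completes the proof.
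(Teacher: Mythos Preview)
Your argument is correct, but the case split and the entire $k=0$ analysis are unnecessary. The paper's proof is a single sentence: the $l$-th tensor factor is acted on by $(0,E_{yz}^t)=(0,E_{zy})$, and since $y,z\le n_1$ this annihilates $\mathbb C^{n_2}\subset V$.

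The point you overlooked is that this observation disposes of \emph{all} $k$ simultaneously, including $k=0$. Working directly on $\Phi^{-1}(F)\in \mathrm{Ind}_P^G(\tau)\otimes V^{\otimes m}$ (a linear combination of $f\otimes v_1\otimes\cdots\otimes v_m$ with $v_l\in\mathbb C^{n_2}$), the operator $\Lambda(\Delta_{kl}(E_{yz}))$ places $(0,E_{zy})\cdot v_l=0$ in the $l$-th slot regardless of what $(0,E_{yz})$ does at position $k$; hence $\Delta_{kl}(E_{yz})\cdot\Phi^{-1}(F)=0$ and the result follows. Equivalently, you may factor in the \emph{other} order, $\Delta_{0l}(E_{yz})=\Delta_0(E_{yz})\circ\Delta_l(E_{zy})$, and observe (from the proof of Lemma~\ref{lem basic formula 1}, which gives a formula valid at every $g$) that $\Delta_l(E_{zy})\cdot F$ is already the zero function, so applying $\Delta_0(E_{yz})$ afterwards is vacuous. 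Your route through Lemma~\ref{lem action on zero term} and the $P$-equivariance/Levi analysis for $((0,E_{yz})\cdot F)(1)$ is valid, but it reproves from scratch something that is immediate once you act on the $l$-th slot first.
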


\begin{proof}
This follows from that $E_{yz}$ acts by zero on the $l$-th copy of $\mathbb C^{n_2} \subset V$.
\end{proof}

The another case is in the form:
\[  \begin{pmatrix} 0_{n_1} &  \\ & * \end{pmatrix} .
\]

\begin{lemma} \label{lem lowere lower corner}
Let $F \in \widetilde{\mathcal W}$. Suppose we are in one of the following cases:
\begin{enumerate}
    \item $0 \leq k<l \leq m_1$; or
    \item $1 \leq k\leq m_1 < l \leq m_1+m_2$
\end{enumerate}
 Then, for $n_1< y <z$, $(\Delta_{kl}(E_{yz})\cdot F)(1)=0$.
\end{lemma}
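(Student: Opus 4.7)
The plan is to parallel the argument of Lemma \ref{lem lower upper corner}. The essential input is structural: since both $y$ and $z$ exceed $n_1$, the operators $E_{yz}$ and $E_{yz}^t = E_{zy}$ each act as zero on the subspace $\mathbb{C}^{n_1} \subset V$. Indeed, $E_{yz}$ sends $e_z \mapsto e_y$ and kills every other standard basis vector, while $e_z \notin \mathbb{C}^{n_1}$ because $z > n_1$; similarly $E_{zy}$ sends $e_y \mapsto e_z$, which vanishes on $\mathbb{C}^{n_1}$ because $y > n_1$. The remainder of the argument just checks that in each enumerated configuration of $(k,l)$, at least one of the two slots hit by $\Delta_{kl}(E_{yz})$ is forced to land inside $\mathbb{C}^{n_1}$.

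First I would unwind the definition via Lemma \ref{lem basic formula 1} (iterated in the two slots when $k \geq 1$, and applied with $E$ replaced by $E^t$ on slot $l$): for $F \in \widetilde{\mathcal W}$ with $F(1) = \sum_i \mathbf{x}_i \otimes v_{i,1} \otimes \cdots \otimes v_{i,m}$, the value $(\Delta_{kl}(E_{yz}) \cdot F)(1)$ is a sum of tensors with $(0,E_{yz})$ applied to the $k$-th tensor slot (or to $\mathbf{x}_i$ if $k=0$) and $(0,E_{yz}^t)$ applied to the $l$-th tensor slot. By the definition of $\widetilde{\mathcal W}$, the slots $v_{i,r}$ with $1 \leq r \leq m_1$ lie in $\mathbb{C}^{n_1} \subset V$.

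Then the two cases finish off. In case (1) with $k \geq 1$, slot $k \leq m_1$ lies in $\mathbb{C}^{n_1}$, so $(0,E_{yz})$ kills it and the whole tensor vanishes. In case (1) with $k=0$, the $V$-action is concentrated at slot $l$ with $1 \leq l \leq m_1$; hence $v_{i,l} \in \mathbb{C}^{n_1}$ and $(0,E_{yz}^t) = (0,E_{zy})$ annihilates it, so the tensor again dies before any action on $\tau$ can matter. In case (2), $1 \leq k \leq m_1 < l$, slot $k$ lies in $\mathbb{C}^{n_1}$ and $(0,E_{yz})$ kills it. Notably, unlike in Lemma \ref{lem upper corner triangualr comp}, no appeal to Lemma \ref{lem action on zero term} is required when $k=0$, since the slot-$l$ factor already vanishes at the level of $V$.

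I do not anticipate any real obstacle: the entire lemma reduces to a computation at the level of the standard representation $V$, requiring only careful bookkeeping of which standard basis vectors $E_{yz}$ and $E_{zy}$ move in or out of the subspace $\mathbb{C}^{n_1}$.
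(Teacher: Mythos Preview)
Your proposal is correct and follows essentially the same approach as the paper: both arguments reduce to the observation that $E_{yz}$ and $E_{zy}$ annihilate $\mathbb{C}^{n_1}\subset V$ because $y,z>n_1$, and then locate a tensor slot in $\mathbb{C}^{n_1}$ that gets killed. The only cosmetic difference is that the paper handles case~(1) uniformly via the $l$-th slot (which lies in $\mathbb{C}^{n_1}$ for all $0\le k<l\le m_1$), whereas you split case~(1) into $k\ge 1$ and $k=0$; your split is unnecessary but harmless.
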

\begin{proof}
(2) follows from that $E_{yz}$ acts by zero on the $k$-th copy of $\mathbb C^{n_1}\subset V$ and (1) follows from $E_{zy}$ acts by zero on the $l$-th copy of $\mathbb C^{n_1} \subset V$.
\end{proof}

We consider matrices of the form $\begin{pmatrix} 0_{n_1} & \\ * & 0_{n_2} \end{pmatrix}$.

\begin{lemma} \label{lem lower corner action }
Let $F \in \widetilde{\mathcal W}$. Let $m_1 <l$ and  $z \leq n_1 < y$. Then, 
\begin{align*}
   \sum_{k=0}^{l-1}  (\Delta_{kl}(E_{yz}) \cdot F)(1) 
=  & 0
\end{align*}
\end{lemma}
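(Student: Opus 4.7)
The plan is to factor the sum so that Lemmas \ref{lem basic formula 1} and \ref{lem action on zero term} apply. For $k<l$, the operators $\Delta_k(E_{yz})$ and $\Delta_l(E_{yz}^t)=\Delta_l(E_{zy})$ act on disjoint tensor positions in $U(\frg_0)^{\otimes(m+1)}$, hence commute, and their product is $\Delta_{kl}(E_{yz})$. I would therefore write
\[
\sum_{k=0}^{l-1}\Delta_{kl}(E_{yz})
\;=\; \Delta_l(E_{zy})\cdot\Bigl(\sum_{k=0}^{m}\Delta_k(E_{yz}) \;-\; \sum_{k=l}^{m}\Delta_k(E_{yz})\Bigr),
\]
apply to $F$ and evaluate at $1$. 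By Lemma \ref{lem basic formula 1} the evaluation $(\Delta_l(E_{zy})\cdot H)(1)$ depends only on $H(1)$, and by Lemma \ref{lem action on zero term} the inner sum $\sum_{k=0}^m(\Delta_k(E_{yz})\cdot F)(1)$ equals $((0,E_{yz})\cdot F)(1)$. The claim is thus reduced to showing (i) $((0,E_{yz})\cdot F)(1)=0$ and (ii) $\sum_{k=l}^m\bigl(\Delta_l(E_{zy})\Delta_k(E_{yz})\cdot F\bigr)(1)=0$.

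For (i), I would use the complexified Iwasawa-type decomposition
\[
(0,E_{yz}) \;=\; (-E_{zy},\,E_{yz}) \;+\; (E_{zy},0) \;=\; (-E_{zy},-(-E_{zy})^t) + \phi^L(E_{zy}),
\]
the first summand lying in $\frk$ by \eqref{identification} and the second in $\frp$. Because $F(k)=F(1)$ for all $k\in K$, the Lie algebra action of $\frk$ on $F$ vanishes at $1$, killing the first summand. For the second, the hypothesis $z\le n_1<y$ forces $E_{zy}$ into the Lie algebra of the unipotent radical of $P_{n_1,n_2}$, so $\phi^L(E_{zy})$ is a complexified nilpotent element of $\frp$. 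Since $\exp$ of such elements stays in $N_P\subset P$ where the modular character is trivial, the formula $F(p)=\delta^{1/2}(p)\,p\cdot F(1)$ reduces $(\phi^L(E_{zy})\cdot F)(1)$ to the Leibniz action of $\phi^L(E_{zy})$ on $F(1)=\mathbf x\otimes v_1\otimes\cdots\otimes v_m$. But $\phi^L(E_{zy})$ annihilates each $V$-factor (Lemma \ref{lem std repn}) and annihilates $\mathbf x\in\tau$ because $\tau$ is trivially extended on $N_P$. Hence $((0,E_{yz})\cdot F)(1)=0$.

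For (ii), expand the product. The $k=l$ term collapses (same-position composition) to $\Delta_l(E_{zy}E_{yz})=\Delta_l(E_{zz})$, whose action at $1$ replaces $v_l$ by $E_{zz}v_l=(v_l)_z\,e_z=0$, since $v_l\in\mathbb{C}^{n_2}$ and $z\le n_1$. For each $k>l$, the operators act on disjoint positions and place $E_{yz}v_k$ in the $k$-th slot; as $k>l>m_1$ forces $v_k\in\mathbb{C}^{n_2}$ and the only nonzero column of $E_{yz}$ is the $z$-th one with $z\le n_1$, one has $E_{yz}v_k=0$. Combining (i) and (ii) gives the identity. The main technical hurdle is step~(i), where the $K$-invariance built into $\widetilde{\mathcal W}$ and the precise realization of $V$ in Lemma \ref{lem std repn} are what force the Lie algebra action to vanish.
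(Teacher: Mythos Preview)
Your proof is correct and follows essentially the same strategy as the paper. Both arguments hinge on the decomposition $(0,E_{yz})=(-E_{zy},E_{yz})+(E_{zy},0)$, use the $K$-invariance of $F\in\widetilde{\mathcal W}$ to kill the $\mathfrak{k}$-part, and use that $(E_{zy},0)$ annihilates $V$ (Lemma~\ref{lem std repn}) together with the trivial $N_P$-action on $\tau$ to kill the remaining part; the paper packages the same computation as first establishing $(\Delta_0(E_{yz})\cdot F)(1)=-\sum_{1\le k\le m_1}(\Delta_k(E_{yz})\cdot F)(1)$ and then imposing $\Delta_l(E_{zy})$, while you factor out $\Delta_l(E_{zy})$ from the start and show $((0,E_{yz})\cdot F)(1)=0$ directly---a slightly cleaner reorganization of the same argument.
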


\begin{proof}
We first consider $(\Delta_{0l}(E_{yz})\cdot F)(1)$. We shall compute from using another expression:
\[ (0,  E_{yz})=(-E_{zy}, E_{yz}) +(E_{zy},0) .
\]

We define $\Delta_k^L(E)$ and $\Delta_{kl}^L(E)$ analogously as $\Delta_k(E)$ and $\Delta_{kl}(E)$ respectively, but using $(E,0)$ instead of $(0,E)$. We still have an analogous formula for Lemma \ref{lem action on zero term} for the left version, and so \[ (\Delta^L_0(E_{zy})\cdot F)(1)=((E_{zy},0)\cdot F)(1)-\sum_{1 \leq k \leq m_1+m_2}(\Delta^L_k(E_{zy})\cdot F)(1).
\]
 Since $(E_{zy},0)$ acts by $0$ on $V$, the latter terms are all zero. Since $E_{zy}$ is an upper triangular matrices, we have that
\[   (( E_{zy},0)\cdot F)(1)=(E_{zy},0) \cdot F(1) ,
\]
The right hand side is the Lie algebra action on $F(1) \in \tau \otimes V^{\otimes (m_1+m_2)}$, and so we also have such term to be zero. In conclusion, we have:
$(\Delta_0^L(E_{zy})\cdot F)(1)=0$ and hence $(\Delta_{0l}^L(E_{zy})\cdot F)(1)=0$.

Now we compute 
\[((\Delta_{0}(E_{yz})\cdot F)(1) = ((\Delta_{0}^L(-E_{zy})+\Delta_{0}(E_{yz}))\cdot F)(1)\] 
as an action for the whole term $(-E_{zy}, E_{yz})$, where the term $\Delta^L_0(-E_{zy})$ acts by zero since $E_{zy}$ is an upper triangular matrix and so $f(\mathrm{exp}(s(E_{zy},0))$ is a constant for any $f \in \mathrm{Ind}_P^G(\tau)$. By using a version of Lemma \ref{lem action on zero term} again, we have:
\[  (\Delta_{0}(E_{yz})\cdot F)(1)= ((-E_{zy},E_{yz})\cdot F)(1)-\sum_{1\leq k\leq m_1+m_2}((-\Delta^L_{k}(E_{zy})+\Delta_k(E_{yz}))\cdot F)(1)
\]
The first term in the RHS is zero since $F \in \widetilde{\mathcal W}$ is $K$-invariant and $(-E_{zy}, E_{yz})$ is in $\mathfrak k$. Furthermore, $\Delta^L_k(E_{zy})\cdot F=0$ since its action on $V$ is trivial. Thus, the above expression reduces to give:
\[   (\Delta_0(E_{yz})\cdot F)(1)=-\sum_{1\leq k \leq m_1+m_2} (\Delta_k(E_{yz})\cdot F)(1) .
\]
But we see the terms are zero for $k \geq m_1+1$ by using Lemma \ref{lem basic formula 1}. Thus, we further have:
\begin{align} \label{eqn reduction ofr lower}
  (\Delta_{0}(E_{yz})\cdot F)(1)=-\sum_{1\leq k\leq m_1} (\Delta_{k}(E_{yz})\cdot F)(1) 
\end{align}
Imposing the action of $(0,E_{zy})$ on the $l$-th copy, we have:
\[  (\Delta_{0l}(E_{yz})\cdot F)(1)=-\sum_{1\leq k\leq m_1} (\Delta_{kl}(E_{yz})\cdot F)(1) .
\]

This gives that:
\[ \sum_{0\leq k \leq m_1} (\Delta_{kl}(E_{yz})\cdot F)(1)=0 .
\]
Again $(\Delta_{kl}(E_{yz})\cdot F)(1)=0$ for $l>k \geq m_1+1$, we have:
\[   \sum_{0\leq k \leq l-1} ((\Delta_{kl}(E_{yz})\cdot F)(1)=0
\]

\end{proof}

Let $e_1, \ldots, e_{n_1+n_2}$ be a standard basis for $V$. Let $\langle , \rangle_V$ be the inner product on $V$ given by: for $a, b \in \mathbb C$,
\[  \langle a e_i,b e_j \rangle_V=a\bar{b} \cdot \delta_{ij} .
\]
We define $\delta_l^i: \tau \otimes V^{\otimes m}\rightarrow \tau \otimes V^{\otimes m}$ determined by:
\[ \delta_l^i(\mathbf x\otimes v_1\otimes \ldots \otimes v_m)=\langle v_l, e_i\rangle_V \cdot \mathbf x \otimes v_1 \otimes \ldots \otimes e_i \otimes \ldots \otimes v_m,
\]
where $e_i$ is in the $l$-th position.

\begin{lemma} \label{lem lower lower action 3}
Let $F \in \widetilde{\mathcal W}$. Let $l \leq m_1$ and $z \leq n_1 < y$. Then,
\[   (\Delta_{0l}(E_{yz}) \cdot F)(1) = -\delta_{l}^z(F(1))  .
\]
\end{lemma}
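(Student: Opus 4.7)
The plan is to reduce to the intermediate identity
\[
(\Delta_0(E_{yz}) \cdot F)(1) = -\sum_{1 \leq k \leq m_1} (\Delta_k(E_{yz}) \cdot F)(1)
\]
derived as (\ref{eqn reduction ofr lower}) inside the proof of Lemma \ref{lem lower corner action }. That identity was obtained using only the $K$-invariance of $F \in \widetilde{\mathcal W}$ and the assumption $z \leq n_1 < y$, and did not depend on $l$, so it is directly available in our setting.

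Next I would act on both sides by $\Delta_l(E_{zy})$. Since this operator acts on the $l$-th tensor slot while $\Delta_0(E_{yz})$ and each $\Delta_k(E_{yz})$ (for $k \neq l$) act on different slots, they commute, and the composition $\Delta_l(E_{zy})\,\Delta_0(E_{yz})$ equals $\Delta_{0l}(E_{yz})$ because $E_{yz}^t = E_{zy}$. By Lemma \ref{lem basic formula 1}, applying $\Delta_l(E_{zy})$ to a function $G$ and then evaluating at $1$ amounts to applying $E_{zy}$ to the $l$-th slot of $G(1)$. So the left hand side becomes exactly $(\Delta_{0l}(E_{yz}) \cdot F)(1)$.

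The right hand side then collapses after a short case analysis. Each term $(\Delta_k(E_{yz}) \cdot F)(1)$ has $e_y$ in position $k$ (weighted by $\langle v_{i,k}, e_z\rangle_V$) and the original $v_{i,r}$ in every other position. For $k \neq l$, position $l$ still contains $v_{i,l} \in \mathbb{C}^{n_1}$; since $y > n_1$, $E_{zy}$ annihilates $\mathbb{C}^{n_1}$ and the term dies. For $k = l$ (which lies in the summation range because $1 \leq l \leq m_1$), position $l$ contains $e_y$ and $E_{zy} e_y = e_z$, yielding
\[
\sum_i \langle v_{i,l}, e_z\rangle_V\, \mathbf{x}_i \otimes v_{i,1} \otimes \cdots \otimes e_z \otimes \cdots \otimes v_{i,m} = \delta_l^z(F(1)).
\]
Combined with the overall minus sign this gives the claim. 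The only delicate point is the bookkeeping at $k = l$, where the two actions pile on the same slot with composition $E_{zy} E_{yz} = E_{zz}$, the rank-one projection onto $e_z$ that precisely encodes $\delta_l^z$; once this slot-level identity is checked, the proof follows in one line from (\ref{eqn reduction ofr lower}).
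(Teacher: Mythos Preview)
Your proof is correct and follows essentially the same route as the paper: invoke the identity (\ref{eqn reduction ofr lower}) from the proof of Lemma \ref{lem lower corner action }, apply $\Delta_l(E_{zy})$ and use Lemma \ref{lem basic formula 1} to see that the $k\neq l$ terms vanish because $E_{zy}$ kills $\mathbb C^{n_1}$, and then reduce the $k=l$ term via $E_{zy}E_{yz}=E_{zz}$ to $\delta_l^z(F(1))$. The paper's argument is the same, only slightly more terse.
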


\begin{proof}
The computation for (\ref{eqn reduction ofr lower}) in the previous lemma again yields:
\[  (\Delta_{0}(E_{yz})\cdot F)(1) =-\sum_{1\leq k \leq m_1} (\Delta_k(E_{yz})\cdot F)(1)
\]
Since $E_{zy}$ acts zero on $\mathbb C^{n_1}\subset V$, we have that $(\Delta_l(E_{zy})\cdot \Delta_k(E_{yz}) \cdot F)(1) = 0$ for $k \neq l$ and hence:
\[   (\Delta_{0l}(E_{yz})\cdot F)(1)=- (\Delta_l(E_{zy})\cdot \Delta_l(E_{yz})\cdot F)(1) .
\]
By using Lemma \ref{lem basic formula 1} twice and using $E_{zy}E_{yz}v=E_{zz}v$, we then have 
\[  (\Delta_{0l}(E_{yz})\cdot F)(1)=- \delta^z_l(F(1)) .
\]

\end{proof}

\section{Parabolic induction under Arakawa-Suzuki type functor} \label{s parabolic as functor}

The main goal of this section is to prove Theorem \ref{thm isomophic of parabolic induction}. We will first illustrate how the dimensions of parabolically induced modules behave under $\Gamma_{n,m}$ in Section \ref{ss standard module dimension}, which only involves some simpler dimension formulas. Working the full version of Theorem \ref{thm isomophic of parabolic induction} requires a more substantial analysis on the module structure. 

\subsection{Height of parameters and representations}


\begin{definition} \label{d-height}
Let $(\lambda_L,\lambda_R) \in \mathfrak h_0^*\times \mathfrak h_0^*$ with $\lambda_L-\lambda_R$ to be integral. Write $\mu=\lambda_L-\lambda_R=(\mu_1, \ldots, \mu_n) \in \mathfrak h_0^*$. The height of $(\lambda_L,\lambda_R)$ is defined as
$$\mathrm{ht}((\lambda_L,\lambda_R)) := \begin{cases}
\sum_{i=1}^n \mu_i & \text{if all } \mu_i \geq 0; \\
-\infty & \text{otherwise}.
\end{cases}$$
We also define $\mathrm{ht}(X(\lambda_L, \lambda_R)):=\mathrm{ht}(\lambda_L, \lambda_R)$ and $\mathrm{ht}(J(\lambda_L, \lambda_R)):=\mathrm{ht}(\lambda_L, \lambda_R)$.
\end{definition}

The height will be more important later in picking right choices of Arakawa-Suzuki type functors  in Theorems \ref{thm-std} and \ref{thm irreducibility of functor}.

\subsection{Dimension under $\Gamma_{n,m}$} \label{ss standard module dimension}

Before working on funtoriality of parabolic induction under $\Gamma_{n,m}$, it may be educative to illustrate some simpler nature of parabolic inductions under $\Gamma_{n,m}$.

We consider a principal series $X=X(\lambda_L, \lambda_R)$. By the identifications in Section \ref{ss identifications}, there is a natural isomorphism:
\[  \Gamma_{n,m}(X(\lambda_L, \lambda_R)) \cong (\mathbb{C}_{(\lambda_L,\lambda_R)} \otimes V^{\otimes m})^T .
\]
Note that 
$\mathbb C_{(\lambda_L,\lambda_R)}|_T = \mathbb C_{\lambda_L-\lambda_R} = \mathbb C_{\mu}$, and $V|_K = J(\rho,e_1+\rho)|_K \cong F_{e_1}^* = F_{-e_n}$ as $K$-modules, where $F_{\xi}$ as the irreducible $K$-module of highest weight $\xi$, and $F_{\xi}^*$ as its contragredient representation (see Equation \eqref{eq-finited} in the Appendix for details on finite dimensional representations of $\mathrm{GL}_n(\mathbb C)$). In particular, $V|_T = \bigoplus_{i=1}^n \mathbb C_{-e_i}$ are the $T$-weights of $V$.


It then follows from a direct computation that if $\mathrm{ht}(X(\lambda_L, \lambda_R)) \neq -\infty$, then 
\begin{align} \label{eqn dimension as functor}
    \dim(\Gamma_{n,m}(X(\lambda_L,\lambda_R))) =\left\{ \begin{array}{ll} \frac{m!}{\mu_1! \mu_2! \dots \mu_n!} &  m=\mathrm{ht}(X(\lambda_L, \lambda_R)) \\
                0        &  \mbox{ otherwise } \end{array}  \right. 
\end{align}
If $\mathrm{ht}(X(\lambda_L, \lambda_R))=-\infty$, then for all $m$,
\[  \Gamma_{n,m}(X(\lambda_L, \lambda_R)) =0 .
\]

\begin{lemma} \label{lem product hecke algebra}
Let $\pi_1$ and $\pi_2$ in $\mathcal H_{m_1}$ and $\mathcal H_{m_2}$ respectively. We have:
\begin{align} \label{eqn product hecke algebra}
       \mathrm{dim}(\pi_1\times \pi_2) =\frac{\mathrm{dim}(\pi_1) \cdot \mathrm{dim}(\pi_2)\cdot (m_1+m_2)!}{m_1! \cdot m_2!} ,
\end{align}
where $\times$ is the induction defined in Section \ref{def gaha type a}.
\end{lemma}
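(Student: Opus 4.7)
The plan is to reduce the statement to the freeness of $\mathbb{H}_{m_1+m_2}$ as a right module over $\mathbb{H}_{m_1}\otimes \mathbb{H}_{m_2}$, which is a standard consequence of the PBW decomposition for graded Hecke algebras.

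First, recall the PBW decomposition: as vector spaces,
\[
\mathbb{H}_m \cong \mathbb{C}[y_1,\ldots,y_m]\otimes \mathbb{C}[S_m],
\]
with multiplication twisted by the braid and cross relations. In particular, $\mathbb{H}_m$ is a free right $\mathbb{C}[y_1,\ldots,y_m]$-module with basis $\{w\,:\, w\in S_m\}$, and likewise a free left module with such a basis. Let $m=m_1+m_2$. Under the embedding $\mathbb{H}_{m_1}\otimes \mathbb{H}_{m_2}\hookrightarrow \mathbb{H}_m$ described in Section~\ref{ss gaha}, the polynomial part maps onto the full polynomial subalgebra $\mathbb{C}[y_1,\ldots,y_m]$ (the $y$-generators match up), whereas on the symmetric group side one lands in the parabolic subgroup $S_{m_1}\times S_{m_2}\subseteq S_m$.

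The first step is therefore to identify $\mathbb{H}_m$ as a right $\mathbb{H}_{m_1}\otimes \mathbb{H}_{m_2}$-module: picking the set $W^{m_1,m_2}$ of minimal length coset representatives for $(S_{m_1}\times S_{m_2})\backslash S_m$, the PBW decomposition together with the standard fact $|W^{m_1,m_2}|=\binom{m_1+m_2}{m_1}$ gives a decomposition
\[
\mathbb{H}_m \;=\; \bigoplus_{w\in W^{m_1,m_2}} w\cdot \bigl(\mathbb{H}_{m_1}\otimes \mathbb{H}_{m_2}\bigr)
\]
as right $(\mathbb{H}_{m_1}\otimes \mathbb{H}_{m_2})$-modules. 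The freeness reduces to showing that each $w\cdot(\mathbb{H}_{m_1}\otimes\mathbb{H}_{m_2})$ is a free module of rank one; this in turn follows by combining (i) the $\mathbb{C}[y_1,\ldots,y_m]$-freeness of $\mathbb{H}_m$ with $S_m$-basis, and (ii) the fact that multiplication in $\mathbb{H}_m$ by the commuting $y_i$'s from the right leaves the polynomial subalgebra fixed.

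Granting the freeness, the induced module is
\[
\pi_1 \times \pi_2 \;=\; \mathbb{H}_m \otimes_{\mathbb{H}_{m_1}\otimes \mathbb{H}_{m_2}} (\pi_1\boxtimes \pi_2) \;\cong\; \bigoplus_{w\in W^{m_1,m_2}} w\otimes (\pi_1\boxtimes \pi_2),
\]
so that
\[
\dim(\pi_1\times \pi_2) = |W^{m_1,m_2}|\cdot \dim(\pi_1)\dim(\pi_2) = \frac{(m_1+m_2)!}{m_1!\,m_2!}\cdot \dim(\pi_1)\dim(\pi_2),
\]
as required. The only genuine content is the PBW-freeness step; this is well-known (e.g.\ it is the graded analogue of the parabolic induction decomposition for affine Hecke algebras), so I expect the main task in writing the proof is just to invoke the correct reference or give a short direct argument via the coset decomposition of $S_m$.
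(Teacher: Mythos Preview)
Your proposal is correct and follows essentially the same approach as the paper: both invoke the well-known fact that $\{w\otimes v : w\in W^{m_1,m_2},\ v\text{ in a basis of }\pi_1\boxtimes\pi_2\}$ is a basis of the induced module, coming from the freeness of $\mathbb{H}_{m_1+m_2}$ over $\mathbb{H}_{m_1}\otimes\mathbb{H}_{m_2}$ with basis the minimal coset representatives. The paper simply states this as a well-known fact in one sentence, whereas you spell out the PBW justification; the content is the same.
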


\begin{proof}
This follows from the well-known fact that $t_w\otimes v$ form a basis for $\mathbb H_{m_1+m_2}\otimes_{\mathbb H_{m_1}\otimes \mathbb H_{m_2}}(\pi_1\boxtimes \pi_2)$, where $w$ runs for all minimal representatives in $(S_{m_1}\times S_{m_2})\setminus S_{m_1+m_2}$ and $v$ runs for a fixed basis for $\pi_1\boxtimes \pi_2$.
\end{proof}

\begin{proposition}
Let  $X_1$ be irreducible in $\mathcal{HC}_{n_1}$ and $X_2$ be irreducible in $\mathcal{HC}_{n_2}$. Let $m_1=\mathrm{ht}(X_1)$ and let $m_2=\mathrm{ht}(X_2)$. Suppose $m_1\neq -\infty$ and $m_2\neq -\infty$. Then 
\begin{align} \label{eqn dimension parabolic functor}
\mathrm{dim}~\Gamma_{n_1+n_2, m_1+m_2}(X_1 \times X_2) =   \mathrm{dim}~(\Gamma_{n_1,m_1}(X_1)\times \Gamma_{n_2,m_2}(X_2)).
\end{align}
where the $\times$ on the LHS of the above equality is real parabolic induction defined in Section \ref{sec-ps}. 
\end{proposition}

\begin{proof}
We only sketch the proof. It is a standard fact that $X_1$ and $X_2$ can be written as a linear combination of standard representations of $\mathrm{GL}_{n_1}(\mathbb C)$ and $\mathrm{GL}_{n_2}(\mathbb C)$ respectively. Thus, one can express the terms $\Gamma_{n_1+n_2, m_1+m_2}(X_1\times X_2)$, $\Gamma_{n_1, m_1}(X_1)$ and $\Gamma_{n_2, m_2}(X_2)$ from such linear combinations and (\ref{eqn dimension as functor}). With the fact that $\chi_{a,b}\times \chi_{a',b'}$ and $\chi_{a',b'}\times \chi_{a,b}$ have the same simple composition factors, it reduces to show the equality for both $X_1$ and $X_2$ to be standard representations. Such case follows from Lemma \ref{lem product hecke algebra} and (\ref{eqn dimension as functor}).
\end{proof}

\subsection{Description on subspaces of the functor} \label{ss description of sub functor}

For fixed $m_1, m_2$, let $S^{m_1, m_2}$ be the set of minimal representatives of $(S_{m_1}\times S_{m_2})\setminus S_{m_1+m_2}$. Let $e_1, \ldots, e_{n_1}, e_{n_1+1}, \ldots, e_{n_1+n_2}$ be the standard basis for $\mathbb C^{n_1+n_2}$. Let $V_1=\mathrm{span}\left\{ e_1, \ldots, e_{n_1} \right\}$ and let $V_2=\mathrm{span}\left\{ e_{n_1+1}, \ldots, e_{n_1+n_2} \right\}$. For $w \in S^{m_1,m_2}$, let $\mathcal X_w$ be the subspace of $\tau \otimes V^{\otimes (m_1+m_2)}$  spanned by the vectors
\[ \mathbf x\otimes  v_1 \otimes v_2 \otimes \ldots \otimes v_{m_1+m_2}
\]
with $v_i \in V_1 $  if $w(i) \in \{1, \ldots, m_1 \}$ and $v_i \in V_2$ if $w(i) \in \{m_1+1,\dots,m_1+m_2\}$, and $\mathbf x \in \tau$.

We have the following simple linear algebra lemma, whose proof is elementary and so omitted.

\begin{lemma} \label{lem injectivity for show paraboic}
 For each representative $ w \in S^{m_1,m_2}$, let $p_w$ be a non-zero element in $\mathcal X_w$. Then those $p_w$'s form a set of linearly independent vectors.
\end{lemma}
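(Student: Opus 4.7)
The plan is to observe that the lemma reduces immediately to the elementary fact that a family of direct summands of a vector space spans a direct sum. Concretely, since $V = V_1 \oplus V_2$ as vector spaces, distributing over tensor products gives a decomposition
\[
V^{\otimes(m_1+m_2)} \;=\; \bigoplus_{S \subseteq \{1,\ldots,m_1+m_2\}} V^{(S)},
\qquad V^{(S)} := W_1 \otimes \cdots \otimes W_{m_1+m_2},
\]
where $W_i = V_1$ if $i \in S$ and $W_i = V_2$ if $i \notin S$. Tensoring with $\tau$ preserves the direct sum, so $\tau \otimes V^{\otimes(m_1+m_2)} = \bigoplus_S (\tau \otimes V^{(S)})$.

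Next I would match the subspaces $\mathcal{X}_w$ with summands of this decomposition. For $w \in S^{m_1,m_2}$, set $S_w := \{i : w(i) \in \{1,\ldots,m_1\}\}$. By definition, $\mathcal{X}_w$ consists precisely of those simple tensors in which the $i$-th factor lies in $V_1$ exactly when $i \in S_w$, so $\mathcal{X}_w = \tau \otimes V^{(S_w)}$.

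The key combinatorial input is that the assignment $w \mapsto S_w$ is a bijection between $S^{m_1,m_2}$ and the set of $m_1$-element subsets of $\{1,\ldots,m_1+m_2\}$; this is the standard shuffle description of minimal length coset representatives of $(S_{m_1} \times S_{m_2}) \backslash S_{m_1+m_2}$. Consequently, distinct $w \in S^{m_1,m_2}$ correspond to distinct subsets $S_w$, hence to distinct summands in the direct sum decomposition above. The $\mathcal{X}_w$'s are therefore linearly independent subspaces, and any collection of nonzero vectors $p_w \in \mathcal{X}_w$ is linearly independent.

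There is no real obstacle to this argument; the only point requiring mild care is the shuffle bijection, which is a textbook fact that I would cite rather than reprove. The lemma is thus a direct consequence of $V_1 \cap V_2 = 0$ together with exactness of the tensor product over the ground field.
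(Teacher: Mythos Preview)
Your argument is correct and is precisely the elementary linear algebra argument the paper has in mind; indeed, the paper omits the proof entirely, stating only that it is elementary. Your identification of $\mathcal X_w$ with the summand $\tau\otimes V^{(S_w)}$ in the tensor-distributed decomposition, together with the injectivity of $w\mapsto S_w$ on minimal coset representatives, is exactly the point.
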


\begin{lemma} \label{lem injectivity 2}
We use the notations in the previous lemma. For each representative $w \in S^{m_1,m_2}$, define a linear isomorphism $\iota_w: \mathcal X_1 \rightarrow \mathcal X_w$ given by 
\[ \mathbf x \otimes v_1 \otimes \ldots \otimes v_{m_1+m_2} \mapsto \mathbf x \otimes v_{w^{-1}(1)}\otimes \ldots \otimes v_{w^{-1}(m_1+m_2)} .
\]
The map induces a linear isomorphism from $\mathcal X_1^{K_1\times K_2}$ to $\mathcal X_w^{K_1\times K_2}$.
\end{lemma}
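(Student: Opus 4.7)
The plan is to promote $\iota_w$ to a $K_1\times K_2$-equivariant linear isomorphism $\mathcal X_1 \xrightarrow{\sim} \mathcal X_w$ and then pass to invariants.

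First I would note that $K_1 \times K_2 = U(n_1)\times U(n_2)$ sits block-diagonally inside $K = U(n_1+n_2)$ and therefore stabilizes each of the coordinate subspaces $V_1, V_2 \subseteq V$. Combined with the restriction to $\tau$ of the Levi action of $\mathrm{GL}_{n_1}(\mathbb C) \times \mathrm{GL}_{n_2}(\mathbb C)$, this makes the diagonal action of $K_1\times K_2$ on $\tau \otimes V^{\otimes(m_1+m_2)}$ preserve any subspace characterised by specifying which tensor slots lie in $V_1$ and which in $V_2$; in particular both $\mathcal X_1$ and $\mathcal X_w$ are $K_1\times K_2$-submodules.

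Next I would check $K_1\times K_2$-equivariance of $\iota_w$: since the diagonal action treats every tensor position uniformly, one has, for any $k \in K_1\times K_2$,
\begin{equation*}
k\cdot \iota_w(\mathbf x \otimes v_1\otimes\cdots\otimes v_{m_1+m_2}) = k\mathbf x \otimes k v_{w^{-1}(1)} \otimes\cdots \otimes k v_{w^{-1}(m_1+m_2)} = \iota_w\bigl(k\cdot(\mathbf x \otimes v_1\otimes\cdots\otimes v_{m_1+m_2})\bigr).
\end{equation*}
Since $\iota_w$ is already a linear bijection (its inverse is the analogous permutation map going from $\mathcal X_w$ back to $\mathcal X_1$), restricting to $K_1\times K_2$-invariants yields the claimed isomorphism $\mathcal X_1^{K_1\times K_2} \xrightarrow{\sim} \mathcal X_w^{K_1\times K_2}$.

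There is no genuine obstacle here: the argument is essentially the formal statement that permuting tensor factors commutes with a uniform diagonal group action. The only mild bookkeeping point is to confirm that $\iota_w$ really lands in $\mathcal X_w$, which follows by matching the permutation convention defining $\iota_w$ with the indexing convention defining $\mathcal X_w$ (both are designed to be compatible, so a $V_1$-factor in position $j \le m_1$ of an element of $\mathcal X_1$ is moved to precisely a $V_1$-slot of $\mathcal X_w$, and similarly for $V_2$-factors).
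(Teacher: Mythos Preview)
Your proposal is correct and follows essentially the same approach as the paper: the paper's proof is the single sentence ``This follows from the fact that $K_1 \times K_2$-action commutes with permutations in $S_{m_1+m_2}$,'' which is exactly what you have unpacked in detail. Your additional checks that $K_1\times K_2$ stabilizes each $\mathcal X_w$ and that $\iota_w$ lands in $\mathcal X_w$ are the routine bookkeeping implicit in the paper's one-line argument.
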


\begin{proof}
This follows from the fact that $K_1 \times K_2$-action commutes with permutations in $S_{m_1+m_2}$.
\end{proof}

\begin{lemma} \label{lem description of subspaces}
Retain the notations in the previous two lemmas. For each $w \in S^{m_1,m_2}$, let $\mathcal W_w^{m_1,m_2}$ be the subspace of $\mathrm{Ind}_P^G(\tau \otimes V^{\otimes (m_1+m_2)})^K$ satisfying
$f(1) \in \mathcal X_w$. Then 
\[\bigoplus_{w \in S^{m_1,m_2}} \mathcal W^{m_1,m_2}_w \subset \mathrm{Ind}_P^G(\tau \otimes V^{\otimes (m_1+m_2)})^K \]
and for each $w$, $\mathrm{dim}~\mathcal W^{m_1,m_2}_w=\mathrm{dim}~\mathcal W^{m_1,m_2}_1$.
\end{lemma}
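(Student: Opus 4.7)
The plan is to invoke the identification $\mathrm{Ind}_P^G(\sigma)^K \cong \sigma^{K_P}$ (with $K_P := K \cap P = U(n_1)\times U(n_2)$) from Section \ref{ss identifications}, applied to $\sigma = \tau \otimes V^{\otimes (m_1+m_2)}$. Under the evaluation-at-identity map $f \mapsto f(1)$, the subspace $\mathcal W_w^{m_1,m_2}$ of $K$-invariants sits inside $\sigma^{K_P}$, and by the very definition of $\mathcal W_w^{m_1,m_2}$, this restriction identifies $\mathcal W_w^{m_1,m_2}$ with $\mathcal X_w^{K_1 \times K_2}$. Both claims in the lemma then reduce to linear-algebraic statements about the subspaces $\mathcal X_w \subset \tau \otimes V^{\otimes(m_1+m_2)}$ already proved in Lemmas \ref{lem injectivity for show paraboic} and \ref{lem injectivity 2}.

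For the directness, I would take elements $f_w \in \mathcal W_w^{m_1,m_2}$ with $\sum_{w \in S^{m_1,m_2}} f_w = 0$. Evaluating at $1$ produces $\sum_w f_w(1) = 0$ with $f_w(1) \in \mathcal X_w$. Lemma \ref{lem injectivity for show paraboic} forces each $f_w(1) = 0$, and since any $K$-invariant function in the induced representation is determined by its value at the identity (via the bijection $(\mathrm{Ind}_P^G\sigma)^K \cong \sigma^{K_P}$), each $f_w$ vanishes. This establishes $\sum_w \mathcal W_w^{m_1,m_2} = \bigoplus_w \mathcal W_w^{m_1,m_2}$ inside the full space of $K$-invariants.

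For the dimension equality, I would apply Lemma \ref{lem injectivity 2}, which provides a linear isomorphism $\iota_w : \mathcal X_1 \to \mathcal X_w$ restricting to an isomorphism $\mathcal X_1^{K_1\times K_2} \xrightarrow{\sim} \mathcal X_w^{K_1\times K_2}$. Combining with the identification $\mathcal W_w^{m_1,m_2} \cong \mathcal X_w^{K_1\times K_2}$ from the previous paragraph yields
\[
\dim \mathcal W_w^{m_1,m_2} \;=\; \dim \mathcal X_w^{K_1 \times K_2} \;=\; \dim \mathcal X_1^{K_1\times K_2} \;=\; \dim \mathcal W_1^{m_1,m_2},
\]
as desired.

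No real obstacle is expected; the entire argument is just bookkeeping once one observes that the evaluation map at $1$ makes the decomposition $\tau \otimes V^{\otimes(m_1+m_2)} \supset \bigoplus_w \mathcal X_w$ visible at the level of $K$-invariant sections of the induced representation, and that $K_P$-invariants commute with the permutation-type isomorphisms $\iota_w$ because $K_P = U(n_1) \times U(n_2)$ preserves the flag $V_1 \oplus V_2 = V$ factor-wise.
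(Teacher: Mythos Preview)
Your proposal is correct and follows essentially the same approach as the paper's proof, which simply reads ``The direct sum follows from Lemma \ref{lem injectivity for show paraboic} and the dimension follows from Lemma \ref{lem injectivity 2}.'' You have merely made explicit the identification $(\mathrm{Ind}_P^G\sigma)^K \cong \sigma^{K_P}$ via evaluation at $1$ (from Section \ref{ss identifications}) that the paper leaves implicit, and spelled out why $\mathcal W_w^{m_1,m_2}$ corresponds to $\mathcal X_w^{K_1\times K_2}$ under it.
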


\begin{proof}
The direct sum follows from Lemma \ref{lem injectivity for show paraboic} and the dimension follows from Lemma \ref{lem injectivity 2}.
\end{proof}

\subsection{Parabolic induction under $\Gamma_{n,m}$: functoriality}

We now study the parabolic induction under $\Gamma_{n,m}$.

\begin{theorem} \label{thm isomophic of parabolic induction}
Let $Y_1$ be in $\mathcal{HC}_{n_1}$ and let $Y_2$ be in $\mathcal{HC}_{n_2}$. Then there exists a natural $\mathbb H_m$-algebra isomorphism:
\[   \Gamma_{n_1+n_2,m}(Y_1 \times Y_2) \cong \bigoplus_{m_1+m_2=m} \Gamma_{n_1,m_1}(Y_1) \times \Gamma_{n_2,m_2}(Y_2) .
\]

\end{theorem}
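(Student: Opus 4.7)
The plan is to construct, for each decomposition $m = m_1 + m_2$, a natural $\mathbb{H}_m$-equivariant map $\Psi^{m_1,m_2}\colon \Gamma_{n_1,m_1}(Y_1)\times \Gamma_{n_2,m_2}(Y_2)\to \Gamma_{n_1+n_2,m}(Y_1\times Y_2)$, sum them into $\Psi$, and prove bijectivity via Lemma \ref{lem description of subspaces} and a dimension count using Lemma \ref{lem product hecke algebra}. By Frobenius reciprocity for $\mathbb{H}_m\otimes_{\mathbb{H}_{m_1}\otimes\mathbb{H}_{m_2}}(-)$, it suffices to define an $(\mathbb{H}_{m_1}\otimes \mathbb{H}_{m_2})$-equivariant ``bare'' map $\Psi_0^{m_1,m_2}\colon \Gamma_{n_1,m_1}(Y_1)\boxtimes \Gamma_{n_2,m_2}(Y_2)\to \Gamma_{n_1+n_2,m}(Y_1\times Y_2)$ whose image sits in the cyclic subspace $\widetilde{\CW}:=\CW(Y_1\boxtimes Y_2,n_1,n_2,m_1,m_2)$ of Section \ref{ss special subspace}.

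The construction of $\Psi_0^{m_1,m_2}$ is purely a chain of identifications: under the $K_P$-decomposition $V|_{K_P}=V_1\oplus V_2$ (for $K_P=U(n_1)\times U(n_2)$), a vector $u_1\otimes u_2$ with $u_i\in (Y_i\otimes V_i^{\otimes m_i})^{U(n_i)}$ lies as a $K_P$-fixed element in $\mathcal X_1\subset (Y_1\boxtimes Y_2)\otimes V^{\otimes m}$; the two identifications in Section \ref{ss identifications} then lift it uniquely to an $F\in\widetilde{\CW}\subset \mathrm{Ind}_P^G\bigl((Y_1\boxtimes Y_2)\otimes V^{\otimes m}\bigr)^K\cong \Gamma_{n_1+n_2,m}(Y_1\times Y_2)$, and we set $\Psi_0^{m_1,m_2}(u_1\otimes u_2):=F$.

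For equivariance, I check generators. For $s_i\in\mathbb{H}_{m_1}$ with $i<m_1$, $\Theta(s_i)=-\sum_{y,z}\Delta_{i,i+1}(E_{yz})$; the ``intrinsic'' terms with $y,z\leq n_1$ give the $\mathbb{H}_{m_1}$-action on $\Gamma_{n_1,m_1}(Y_1)$, and the ``mixed'' terms with $y>n_1$ or $z>n_1$ vanish on $\widetilde{\CW}$ because the relevant $E_{yz}$ or $E_{zy}$ annihilates $V_1$ at position $i$ or $i+1$. The case $i>m_1$ is symmetric. For $y_l$ with $l\leq m_1$, one has $\Theta(y_l)=\sum_{0\leq x<l}\Omega_{xl}+\tfrac{n}{2}$; the terms $\Omega_{xl}$ with $1\leq x<l$ reduce to $\Omega_{xl}^{(1)}$ by the same cross-term vanishing, while the critical $\Omega_{0l}$-contribution is controlled by the case analysis of Lemmas \ref{lem upper corner triangualr comp}--\ref{lem lower lower action 3}. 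These lemmas collectively annihilate the off-block contributions on $\widetilde{\CW}$ and produce the correction terms that reconcile the scalar shift $\tfrac{n}{2}$ versus $\tfrac{n_1}{2}$, together with any modular-character contribution coming from the realization of $Y_1\boxtimes Y_2$ inside $\mathrm{Ind}_P^G$. The case $l>m_1$ is symmetric.

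With equivariance in hand, Frobenius reciprocity yields $\Psi^{m_1,m_2}$, and $\Psi:=\bigoplus_{m_1+m_2=m}\Psi^{m_1,m_2}$ is the desired map. For each $w\in S^{m_1,m_2}$, $\Psi(t_w\otimes(u_1\otimes u_2))$ lies in $\CW^{m_1,m_2}_w$; Lemma \ref{lem description of subspaces} guarantees these subspaces form an internal direct sum across all $(m_1,m_2,w)$, giving injectivity of $\Psi$. Surjectivity then follows from Lemma \ref{lem product hecke algebra} and the $K_P$-decomposition of $V^{\otimes m}$, which make the two sides have the same total dimension. The main obstacle is the equivariance check for the $y_l$-generators: one must simultaneously track the action of the unipotent radical of $P$ on $Y_1\boxtimes Y_2$ and the action of mixed-block elements of $\mathfrak{gl}_n$ on $V^{\otimes m}$, which is precisely what Section \ref{s compute hm induced mod} is engineered to handle.
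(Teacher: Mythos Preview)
Your proposal is correct and follows essentially the same approach as the paper: construct the bare $(\mathbb H_{m_1}\otimes\mathbb H_{m_2})$-equivariant map into the cyclic subspace $\widetilde{\CW}=\CW_1^{m_1,m_2}$, verify equivariance on the generators $y_l$ via the block-by-block case analysis of Lemmas~\ref{lem upper corner triangualr comp}--\ref{lem lower lower action 3} (with the $\delta^{1/2}$-correction absorbing the $\tfrac{n}{2}$ versus $\tfrac{n_1}{2}$ discrepancy), extend by Frobenius reciprocity, and conclude bijectivity from Lemma~\ref{lem description of subspaces} together with the dimension count of Lemma~\ref{lem product hecke algebra}. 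The paper organizes this as Claims~1 and~2, but the logical content is the same.
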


\begin{proof}




Let $\tau=Y_1\boxtimes Y_2$. For fixed $m_1, m_2$ with $m_1+m_2=m$, let $\mathcal W_w^{m_1, m_2}$ as in Lemma \ref{lem description of subspaces}. Set
\[  \mathcal P_{m_1,m_2}= \bigoplus_{w \in S^{m_1,m_2}} \mathcal W^{m_1,m_2}_w .
\]
Similar to Lemma \ref{lem injectivity for show paraboic}, one sees that
\[   \Gamma_{n_1+n_2,m}(Y_1\times Y_2)\cong (\mathrm{Ind}_P^G(\tau \otimes V^{\otimes m}))^{K} =\bigoplus_{m_1+m_2=m} \mathcal P_{m_1,m_2} .
\]

{\it Claim 1:} Each $\mathcal P_{m_1,m_2}$ is invariant under $\mathbb H_m$-action, and furthermore, 
\[   \mathcal P_{m_1,m_2} \cong \Gamma_{n_1,m_1}(Y_1) \times \Gamma_{n_2,m_2}(Y_2) .
\]

\ \\
Note that the theorem follows from Claim 1. We consider $\mathbb H_{m_1}\otimes \mathbb H_{m_2}$ as a subalgebra of $\mathbb H_{m_1+m_2}$ in Section \ref{ss gaha}. Our proof of Claim 1 relies on another claim: \\

{\it Claim 2:} $\mathcal W_1^{m_1, m_2}$ is invariant under the action of the subalgebra $\mathbb H_{m_1}\otimes \mathbb H_{m_2}$, and is isomorphic to $\Gamma_{n_1,m_1}(Y_1)\boxtimes \Gamma_{n_2,m_2}(Y_2)$. \\

Assume Claim 2 holds in the meanwhile. It is well-known that any element in $\mathbb H_m$ can be written as $\sum_{w \in S^{m_1,m_2}} w^{-1}h_w$ for $h_w \in \mathbb H_{m_1}\otimes \mathbb H_{m_2}$. The element $w^{-1}$ sends $\mathcal W^{m_1,m_2}_1$ to $\mathcal W^{m_1,m_2}_w$ (by a direct computation using Lemma \ref{lem basic formula 1}). This shows that $\mathcal P^{m_1,m_2}$ is invariant under the action of $\mathbb H_m$.

Moreover, by Frobenius reciprocity and the map in Claim 2, we obtain a non-zero map $\psi$ from $\mathbb H_m \otimes_{\mathbb H_{m_1}\otimes \mathbb H_{m_2}}(\Gamma_{n_1,m_1}(Y_1)\boxtimes \Gamma_{n_2, m_2}(Y_2))$ to $\Gamma_{n_1+n_2,m_1+m_2}(Y_1 \times Y_2)$. By Lemma \ref{lem injectivity for show paraboic}, we have that $\psi$ is injective. But, then by comparing the dimensions in Lemmas \ref{lem description of subspaces} and \ref{lem product hecke algebra}, we have that $\psi$ is also surjective. This shows that we have an isomorphism from $\mathcal P^{m_1,m_2}$ to $\Gamma_{n_1, m_1}(Y_1)\times \Gamma_{n_2, m_2}(Y_2)$, proving Claim 1. \\




It remains to prove Claim 2. \\
\noindent
{\it Proof of Claim 2:} We first consider the action of $y_l$ for $1\leq l \leq m_1$ on $\mathcal W_1^{m_1,m_2}$. Let $F \in \mathcal W_1^{m_1,m_2}$. There is a natural isomorphism:
\[  \Phi: \Gamma_{n_1,m_1}(Y_1)\boxtimes \Gamma_{n_2,m_2}(Y_2) \rightarrow \mathcal W_1^{m_1,m_2}
\]
given by 
\[  \mathbf z_1 \otimes \mathbf z_2 \mapsto (1 \mapsto \mathbf z_1 \otimes \mathbf z_2) 
\]
We shall show that $\Phi$ is a $\mathbb H_{m_1}\otimes \mathbb H_{m_2}$-map.

From Definition \ref{def-as}, the action of $y_l$ is given by
\begin{align*}
  \Theta_{\mathbb H_m}(y_l) &=\sum_{k=0}^{l-1} \left( \sum_{1\leq y,z \leq n_1} \Delta_{kl}(E_{yz}) + \sum_{1 \leq y\leq n_1 < z\leq n_1+n_2} \Delta_{kl}(E_{yz})   \right)  \\
    & \quad \quad +\sum_{k=0}^{l-1} \left(\sum_{1\leq z\leq n_1<y \leq n_1+n_2} \Delta_{kl}(E_{yz})+ \sum_{n_1+1\leq y,z\leq n_1+n_2} \Delta_{kl}(E_{yz})  \right)+\frac{n_1+n_2}{2}
\end{align*}

Again $\Theta_{\mathbb H_m}(y_l)F$ is determined by $(\Theta_{\mathbb H_m}(y_l)F)(1)$ and so it suffices to compute that. We shall compute the four parts separately. Now, by Lemma \ref{lem upper corner triangualr comp}, for all $0\leq k<l$,
\[ \left(\sum_{1\leq y\leq n_1<z \leq n_1+n_2}\Delta_{kl}(E_{yz})\cdot F \right)(1)=0 .\]
By Lemmas \ref{lem action of torus lower}, \ref{lem lower upper corner} and \ref{lem lowere lower corner}, for all $0\leq k<l$
\[  \left(\sum_{n_1 < y,z \leq n_1+n_2} \Delta_{kl}(E_{yz})\cdot F\right)(1) =0
\]
By Lemma \ref{lem lower lower action 3} and the fact that $v_k = \sum_{i=1}^{n_1}\langle v_k,e_i\rangle e_i$ (since $v_k \in V_1$), 
\[   \sum_{k=0}^{l-1} \left(\sum_{1 \leq z \leq n_1< y \leq n_1+n_2} \Delta_{kl}(E_{yz}) \cdot F \right)(1)= -n_2F(1)   . \]
As a result, $\Theta_{\mathbb H_m}(y_l)$ is reduced to considering 
\begin{align} \label{eqn action term}
     \left(( \sum_{k=0}^{l-1} \sum_{1\leq y,z \leq n_1} \Delta_{kl}(E_{yz}) +\frac{n_1-n_2}{2})\cdot F\right)(1) .
\end{align}
One may then apply similarly the formula in Lemma \ref{lem basic formula 1}. Putting the normalized factor $\delta^{1/2}$ in the parabolic induction into consideration, one cancels the effect of $-\frac{n_2}{2}$ in the expression of (\ref{eqn action term}), so that it agrees with 
\[   \Theta_{\mathbb H_{m_1}}(y_l) \cdot \Phi^{-1}(F)=\Theta_{\mathbb H_{m_1}}(y_l) \cdot (F(1)).
\]
Here $\Theta_{\mathbb H_{m_1}}(y_l)$ on the LHS is viewed as  the action via the factor in $Y_1\otimes (\mathbb C^{n_1})^{\otimes m_1}$ in $\mathcal W^{m_1,m_2}_1=(Y_1\otimes (\mathbb C^{n_1})^{\otimes m_1})\otimes (Y_2\otimes (\mathbb C^{n_2})^{\otimes m_2})$.

Checking the action for $y_l$ for $m_1+1\leq l\leq m_1+m_2$ is similar and we shall be slightly brief. Again, we divide the computations into four parts as follows:
\begin{itemize}
    \item For the matrices coming from 
$\begin{pmatrix} * & \\ &  \end{pmatrix}$, the action is zero by Lemmas \ref{lem action of torus upper}, \ref{lem upper upper corner} and \ref{lem lower upper actiona }.
 \item  For the matrices coming from $\begin{pmatrix} & * \\ &  \end{pmatrix}$, the action is zero by Lemma \ref{lem upper corner triangualr comp}.
 \item 
For the matrices coming from $\begin{pmatrix} &  \\  * & \end{pmatrix}$, one uses Lemma \ref{lem lower corner action } to get to zero by considering a sum. 
\item  For the matrices $E$ coming from $\begin{pmatrix} &  \\ &  * \end{pmatrix}$, one gets the terms $\Delta_{kl}(E)$ acting by zero for $1\leq k \leq m_1$ by using Lemmas \ref{lem action of torus lower}, and  \ref{lem lower upper corner} and \ref{lem lowere lower corner}. 
\end{itemize}

Then $\Theta_{\mathbb H_m}(y_l)\cdot F$ reduces to: 
\[  \left((\sum_{\substack{k=0, \\ m_1< k< l}}\sum_{n_1+1\leq y,z\leq n_1+n_2} \Delta_{kl}(E_{yz}) + \frac{n}{2}) \cdot F \right)(1) .
\]
With the effect of the normalizing factor $-\frac{n_1}{2}$ from parabolic induction, we have that the term agrees with
\[ \Theta_{\mathbb H_{m_2}}(y_l) \cdot \Phi^{-1}(F) =\Theta_{\mathbb H_{m_2}}(y_l)\cdot (F(1)) 
\]
This also shows the isomorphism in Claim 2. Here $\Theta_{\mathbb H_{m_2}}(y_l)$ on the LHS is viewed as  the action via the factor in $Y_2\otimes (\mathbb C^{n_2})^{\otimes m_2}$ in $\mathcal W^{m_1,m_2}_1=(Y_1\otimes (\mathbb C^{n_1})^{\otimes m_1})\otimes (Y_2\otimes (\mathbb C^{n_2})^{\otimes m_2})$.
\end{proof}

\subsection{General form}

We now have the general form for preserving parabolic inductions.

\begin{corollary} \label{cor preserve parabolic induction}
Let $X_i$ be Harish-Chandra modules of $\mathrm{GL}_{n_i}(\mathbb C)$ for $i=1, \ldots, r$. Then 
\[  \Gamma_{n,m}(X_1\times \ldots \times X_r) = \bigoplus_{m_1,\dots,m_r} \Gamma_{n,m_1}(X_1)\times \ldots \times \Gamma_{n,m_r}(X_r) ,
\]
where $m_1, \ldots, m_r$ run for all non-negative integers such that $m_1+\ldots+m_r=m$.
\end{corollary}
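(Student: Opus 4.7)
The plan is to prove this by induction on $r$, using Theorem \ref{thm isomophic of parabolic induction} as the base case ($r=2$) and the associativity of both the parabolic induction on the $\mathrm{GL}_n(\mathbb C)$ side (cited via \cite[Page 179]{Knp86}) and the product $\times$ on the $\mathbb H_m$ side (observed in Section \ref{ss gaha}). The case $r=1$ is a tautology.

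For the inductive step, suppose the statement holds for $r-1$ factors. Write $n' = n_2 + \ldots + n_r$ and $X' = X_2 \times \ldots \times X_r \in \mathcal{HC}_{n'}$. Using the associativity of the real parabolic induction, we rewrite
\[
X_1 \times X_2 \times \ldots \times X_r \;\cong\; X_1 \times X'.
\]
Applying Theorem \ref{thm isomophic of parabolic induction} to the pair $(X_1, X')$ yields a natural isomorphism
\[
\Gamma_{n,m}(X_1 \times X') \;\cong\; \bigoplus_{m_1 + m' = m} \Gamma_{n_1, m_1}(X_1) \times \Gamma_{n', m'}(X').
\]
By the induction hypothesis applied to $X'$, for each $m'$ we have
\[
\Gamma_{n', m'}(X') \;\cong\; \bigoplus_{m_2 + \ldots + m_r = m'} \Gamma_{n_2, m_2}(X_2) \times \ldots \times \Gamma_{n_r, m_r}(X_r).
\]
Substituting, together with the fact that $\Gamma_{n_1,m_1}(X_1) \times (-)$ commutes with direct sums (since it is defined by the exact operation $\mathbb H_{m} \otimes_{\mathbb H_{m_1} \otimes \mathbb H_{m-m_1}}(-)$), gives
\[
\Gamma_{n,m}(X_1 \times \ldots \times X_r) \;\cong\; \bigoplus_{\substack{m_1,\ldots,m_r \geq 0 \\ m_1+\ldots+m_r = m}} \Gamma_{n_1,m_1}(X_1) \times \Gamma_{n_2,m_2}(X_2) \times \ldots \times \Gamma_{n_r,m_r}(X_r),
\]
where the absence of parentheses on the right-hand side is justified by the associativity of the Hecke algebra product.

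There is no genuine obstacle here: the argument is purely formal once Theorem \ref{thm isomophic of parabolic induction} is in hand, with the only bookkeeping being a reindexing of the summation $\sum_{m_1+m'=m}\sum_{m_2+\ldots+m_r=m'} = \sum_{m_1+\ldots+m_r=m}$. The two associativities ensure that neither side depends on the order in which we group the factors, so the isomorphism is natural in each $X_i$.
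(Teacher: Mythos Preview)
Your proof is correct and is exactly the straightforward induction on $r$ from Theorem \ref{thm isomophic of parabolic induction} that the paper leaves implicit (no proof is given there). The only remark is that the paper's statement writes $\Gamma_{n,m_i}(X_i)$, which is a typo for $\Gamma_{n_i,m_i}(X_i)$; you have silently corrected this, as one must to match Theorem \ref{thm isomophic of parabolic induction}.
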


\section{Standard Modules} \label{s std modules}
In this section, we study the image of $\Gamma_{n,m}$ for principal series $X(\lambda_L,\lambda_R)$ of $\mathrm{GL}_n(\mathbb{C})$. In fact, Theorem \ref{thm isomophic of parabolic induction} reduces the study to a character of $\mathrm{GL}_1(\mathbb C)$, whose image under $\Gamma_{1,m}$ will be computed in Section \ref{ss image char gl1}.

\subsection{Reduction to a particular $\Gamma_{n,m}$}

\begin{lemma} \label{lem height zero}
Let $m'=\mathrm{ht}(J(\lambda_L, \lambda_R))$. If $m \neq m'$, then $\Gamma_{n,m}(X(\lambda_L, \lambda_R))=0$ and $\Gamma_{n,m}(J(\lambda_L, \lambda_R))=0$.
\end{lemma}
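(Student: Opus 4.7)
The plan is to reduce the lemma to the dimension formula \eqref{eqn dimension as functor} and the exactness of $\Gamma_{n,m}$.

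First, for the principal series $X(\lambda_L,\lambda_R)$, the vanishing $\Gamma_{n,m}(X(\lambda_L,\lambda_R)) = 0$ for $m \neq \mathrm{ht}(\lambda_L,\lambda_R)$ is already implicit in formula \eqref{eqn dimension as functor}: when $\mathrm{ht}(\lambda_L,\lambda_R)$ is finite, the dimension is nonzero only for $m = \mathrm{ht}(\lambda_L,\lambda_R)$, and when $\mathrm{ht}(\lambda_L,\lambda_R) = -\infty$ (some entry $\mu_i$ being negative) the formula already records that $\Gamma_{n,m}(X(\lambda_L,\lambda_R)) = 0$ for every $m$. I would also spell out the underlying reason via Corollary \ref{cor preserve parabolic induction}: writing $X(\lambda_L,\lambda_R)$ as the product of the $\mathrm{GL}_1(\mathbb C)$-characters $\chi_{\lambda_{L,i},\lambda_{R,i}}$ from Definition \ref{def ps hc} gives
\[
\Gamma_{n,m}(X(\lambda_L,\lambda_R)) \cong \bigoplus_{m_1+\cdots+m_n=m} \Gamma_{1,m_1}(\chi_{\lambda_{L,1},\lambda_{R,1}}) \times \cdots \times \Gamma_{1,m_n}(\chi_{\lambda_{L,n},\lambda_{R,n}}).
\]
Since $V|_T$ has the single weight $-1$ in the $n=1$ case, the identification of Section \ref{ss standard module dimension} gives $\Gamma_{1,m_i}(\chi_{a,b}) \cong (\mathbb C_{a-b} \otimes V^{\otimes m_i})^T$, which is nonzero only when $m_i = a-b \geq 0$. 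A non-vanishing summand therefore forces $m_i = \mu_i \geq 0$ for every $i$, so $m = \sum_i \mu_i = \mathrm{ht}(\lambda_L,\lambda_R)$.

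For the irreducible case, observe first that $\mathrm{ht}(\lambda_L,\lambda_R)$ is invariant under the diagonal $S_n$-action on parameters, since any permutation merely reorders the entries of $\mu$ and so preserves both the positivity condition and the sum. Combined with Theorem \ref{thm-Zh}(b), this makes $\mathrm{ht}(J(\lambda_L,\lambda_R))$ well defined and equal to $\mathrm{ht}(\lambda_L,\lambda_R)$. Now $J(\lambda_L,\lambda_R)$ is a subquotient of $X(\lambda_L,\lambda_R)$ by Theorem \ref{thm-Zh}(a) (combined with the discussion following the definition of a standard module in Section \ref{sec-ps}), and $\Gamma_{n,m}$ is exact by Definition \ref{def-as}. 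Hence the first part of the lemma immediately propagates: $\Gamma_{n,m}(X(\lambda_L,\lambda_R)) = 0$ forces $\Gamma_{n,m}(J(\lambda_L,\lambda_R)) = 0$ in the same range of $m$.

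There is no genuine obstacle here; the statement is essentially a consolidation of the dimension computation already carried out in Section \ref{ss standard module dimension} together with the exactness of the functor. The only subtlety worth flagging is the $S_n$-invariance of $\mathrm{ht}$, which is what legitimises writing $\mathrm{ht}(J(\lambda_L,\lambda_R))$ independently of the choice of representative in the $S_n$-orbit provided by Theorem \ref{thm-Zh}(b).
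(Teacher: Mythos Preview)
Your proof is correct and follows essentially the same approach as the paper, which simply refers back to the dimension computation in Section \ref{ss standard module dimension}. You have added more detail (the explicit decomposition via Corollary \ref{cor preserve parabolic induction}, the $S_n$-invariance of $\mathrm{ht}$, and the exactness argument for passing from $X(\lambda_L,\lambda_R)$ to $J(\lambda_L,\lambda_R)$), but the core idea is the same.
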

\begin{proof}
This follows from the computations in Section \ref{ss standard module dimension}.
\end{proof}

With this lemma, we only have to study $\Gamma_{n,m}(X)$ when $m$ is the height of a principal series or an irreducible Harish-Chandra module $X$ in the rest of sections.

\subsection{Image of $\Gamma_{1,m}$ on a character} \label{ss image char gl1}
\begin{lemma} \label{lem basic steinberg case}
Let $\chi_{a,b}$ be a $\mathrm{GL}_1(\mathbb{C})$-character defined in \eqref{eqn character gl1}.
Suppose $m := a-b \geq 0$. Then 
\[  \Gamma_{1,m}(\chi_{a, b})=\mathrm{St}([b+\frac{1}{2}, a-\frac{1}{2}])
\]
\end{lemma}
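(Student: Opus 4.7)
The plan is to reduce both sides to explicit one-dimensional $\mathbb{H}_m$-modules and match their scalars.

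First, I would identify the underlying space. By Lemma \ref{lem std repn} with $n=1$ one has $\rho = 0$, so $V = J(0,1) = \chi_{0,1}$, and hence $V^{\otimes m} \cong \chi_{0,m}$ as a $\mathrm{GL}_1(\mathbb{C})$-representation. Tensoring gives
\[
\chi_{a,b} \otimes V^{\otimes m} \cong \chi_{a,\,b+m} = \chi_{a,a}
\]
because $m = a-b$. Restricted to $K = U(1)$ this is trivial, so $\Gamma_{1,m}(\chi_{a,b})$ is one-dimensional.

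Next, I would compute the $\mathbb{H}_m$-action through $\Lambda \circ \Theta$. Unwinding the isomorphism \eqref{eqn isomorphism as comple lie algebra} and the formulas \eqref{eqn isomorphism 1}, on the factor $\chi_{a,b}$ the element $(0,E_{11})$ acts by the scalar $b$ (while $(E_{11},0)$ acts by $a$), and on each copy of $V = \chi_{0,1}$ the element $(0,E_{11})$ acts by $1$ (cf.\ the proof of Lemma \ref{lem std repn}). Consequently $\Omega_{0,l}$ acts on $\chi_{a,b}\otimes V^{\otimes m}$ by $b\cdot 1 = b$, while $\Omega_{k,l}$ for $1\leq k < l$ acts by $1\cdot 1 = 1$. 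Adding the $\tfrac{n}{2}=\tfrac{1}{2}$ shift in $\Theta(y_l)$ yields
\[
\Theta(y_l) = b + (l-1) + \tfrac{1}{2} = b + l - \tfrac{1}{2},
\]
and $\Theta(s_i) = -\Omega_{i,i+1}$ acts by $-1$.

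Finally, I would match this with $\mathrm{St}([b+\tfrac{1}{2}, a-\tfrac{1}{2}])$. The relation $s_i y_i - y_{i+1} s_i = 1$, applied on a one-dimensional $\mathbb{H}_m$-module where $s_i$ acts by $-1$, forces $y_{i+1} = y_i + 1$, so the $y_l$-eigenvalues form an arithmetic progression of common difference $1$. Thus the one-dimensional module $\mathrm{St}([c,c+m-1])$ is uniquely characterized by $s_i \mapsto -1$ and $y_l \mapsto c+l-1$. Taking $c = b + \tfrac{1}{2}$ reproduces the computed eigenvalues, and the endpoint $c + m - 1 = b + \tfrac{1}{2} + (a-b) - 1 = a - \tfrac{1}{2}$ matches the segment. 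The only step requiring care is the scalar by which $\phi^R(E_{11})$ acts on $\chi_{a,b}$, but this is a direct computation from \eqref{eqn isomorphism 1} once one notes that $E_{11}$ acts on $\chi_{a,b}$ by $a+b$ and $jiE_{11}$ by $-(a-b)$.
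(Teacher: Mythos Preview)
Your proof is correct and follows the same approach as the paper: both compute the $\mathbb{H}_m$-action directly by evaluating the scalars by which $(0,E_{11})$ acts on $\chi_{a,b}$ and on $V$, then feeding these into $\Theta(y_l)$ and $\Theta(s_i)$. The paper's proof only spells out the $y_1$ case and declares the rest ``straightforward''; you have simply supplied the remaining details (all $y_l$, all $s_i$, and the matching against the explicit description of $\mathrm{St}([c,c+m-1])$).
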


\begin{proof}
It is clear that $\Gamma_{1,m}(\chi_{a, b})$ is one-dimensional. Now the action follows from a straightforward computation. For example, the element $1$ in $\mathfrak h^* \cong \mathbb C$ acts on $\chi_{a,b}$ according to the formula (\ref{eqn isomorphism 1}), 
\[ \frac{1}{2}[ (b+a) +(\sqrt{-1})^2(a-b)]=b
\]
and hence $y_1$ in $\mathbb H_m$ acts by $b+\frac{1}{2}$. 
 \end{proof}

\subsection{Image of $\Gamma_{n,m}$ on a standard module} 

For $(\lambda_L, \lambda_R) \in \mathfrak h_0^* \times \mathfrak h_0^*$ with $\mathrm{ht}(\lambda_L, \lambda_R)\neq -\infty$, we define a multisegment: write $\lambda_L=(\lambda_{L,1}, \ldots, \lambda_{L,n})$ and $\lambda_R=(\lambda_{R,1},\ldots, \lambda_{R,n})$, 
\[   \mathfrak m(\lambda_L, \lambda_R)=\left\{ [\lambda_{R,1}+\frac{1}{2}, \lambda_{L,1}-\frac{1}{2}], \ldots , [\lambda_{R,n}+\frac{1}{2}, \lambda_{L,n}-\frac{1}{2}] \right\} ,
\]
where we drop all the empty sets (this happens when $\lambda_{L,i}=\lambda_{R,i}$).

\begin{lemma} \label{lem alternate form of std module}
Consider a principal series $X(\lambda_L, \lambda_R)$ of $\mathrm{GL}_n(\mathbb C)$. Write $\lambda_L=(\lambda_{L,1}, \ldots, \lambda_{L,n})$. If 
\[  \mathrm{Re}(\lambda_{L,1}) \geq \ldots \geq \mathrm{Re}(\lambda_{L,n}) ,
\]
then $X(\lambda_L, \lambda_R)$ is a standard module.
\end{lemma}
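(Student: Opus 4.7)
The plan is to verify the dominance condition $\mathrm{Re}(\lambda_L + \lambda_R)$ weakly decreasing, which is the paper's definition of a standard module. Since the hypothesis only directly constrains $\mathrm{Re}(\lambda_L)$, the key idea is to reorder positions in the iterated induction
\[X(\lambda_L, \lambda_R) = \chi_{\lambda_{L,1},\lambda_{R,1}} \times \cdots \times \chi_{\lambda_{L,n},\lambda_{R,n}}\]
while preserving the isomorphism class, so as to also arrange $\mathrm{Re}(\lambda_R)$ compatibly.

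First, I would partition $\{1,\ldots,n\}$ into maximal blocks $B_1,\ldots,B_k$ on which $\mathrm{Re}(\lambda_L)$ is constant, with strict decrease across blocks. Within each block, I would apply adjacent-position swaps to rearrange so that $\mathrm{Re}(\lambda_R)$ is also weakly decreasing within the block; the key input here is that for complex groups $\mathrm{GL}_n(\mathbb{C})$, the Knapp-Stein intertwining operator between $\chi_{a,b}\times\chi_{a',b'}$ and $\chi_{a',b'}\times\chi_{a,b}$ is an isomorphism when $\mathrm{Re}(a)=\mathrm{Re}(a')$ and the relevant parameter differences avoid the bad loci where reducibility occurs. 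Within each block, combining constant $\mathrm{Re}(\lambda_L)$ with weakly decreasing $\mathrm{Re}(\lambda_R)$ yields weakly decreasing $\mathrm{Re}(\nu)$ on that block.

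The main obstacle will be the global cross-block comparison: across a block boundary, the strict drop in $\mathrm{Re}(\lambda_L)$ must dominate any possible rise in $\mathrm{Re}(\lambda_R)$ in order to preserve the global weak decrease of $\mathrm{Re}(\nu)$. This will likely require exploiting additional structure available in this section (such as the finite-height condition $\mu_i\geq 0$ from Section \ref{s std modules}, or rewriting $\lambda_{R,i} = \lambda_{L,i}-\mu_i$ so that changes in $\lambda_R$ are controlled by changes in $\lambda_L$ shifted by integer height offsets), together with a careful verification that the within-block intertwining operators used in the reordering really are isomorphisms in each parameter configuration. Once these two technical points are cleaned up, the proof reduces to straightforward bookkeeping on real parts.
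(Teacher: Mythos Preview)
Your within-block strategy has a genuine gap, not a technicality. Take $n=2$, $\lambda_L=(3,1)$, $\lambda_R=(-2,1)$: the hypothesis $\mathrm{Re}(\lambda_{L,1})>\mathrm{Re}(\lambda_{L,2})$ holds, the two blocks are singletons so no within-block swap is available, yet $\nu=(1,2)$ is not dominant. The extra structure you hope to invoke does not help: the lemma carries no finite-height assumption, and even if it did this example already has $\mu=(5,0)\ge 0$. The ``strict drop in $\mathrm{Re}(\lambda_L)$ dominates the rise in $\mathrm{Re}(\lambda_R)$'' inequality you are aiming for is simply false; cross-block swaps are unavoidable.

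What you are missing is that every swap you actually need---including across blocks---is automatically of a commuting pair. The paper uses the Godement reducibility criterion: $\chi_{r_1,s_1}\times\chi_{r_2,s_2}$ is reducible exactly when $r_1-r_2$ and $s_1-s_2$ are nonzero integers of the same sign. In particular, for any reducible pair the three orderings by $\mathrm{Re}(r)$, by $\mathrm{Re}(s)$, and by $\mathrm{Re}(r+s)$ all agree. Consequently, if two adjacent characters are in the order dictated by the hypothesis (so $\mathrm{Re}(r_1)\ge\mathrm{Re}(r_2)$) but violate $\nu$-dominance (so $\mathrm{Re}(r_1+s_1)<\mathrm{Re}(r_2+s_2)$, forcing $\mathrm{Re}(s_1)<\mathrm{Re}(s_2)$), the pair cannot be reducible, hence commutes. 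A bubble sort on $\mathrm{Re}(\nu)$ starting from the given $\lambda_L$-ordering therefore only ever swaps commuting factors and terminates in a presentation with $\mathrm{Re}(\nu)$ dominant. No block decomposition, no height hypothesis, and no intertwining-operator analysis beyond this irreducibility criterion is needed.
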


\begin{proof}
Recall that $\chi_{r_1,s_1} \times \chi_{r_s, s_2}$ is reducible if and only if there exist integers $p, q \in \mathbb Z$ with $p,q >0$ such that $(\chi_{r_1,s_1}\chi_{r_2,s_2}^{-1})(z)=z^p\bar{z}^q$ (see \cite{Go70}). From this, if we have $\mathrm{Re}(r_1+s_1) \geq \mathrm{Re}(r_2+s_2)$ and $\mathrm{Re}(r_1) \leq \mathrm{Re}(s_1)$, we have that $\chi_{r_1,s_1}\times \chi_{r_2, s_2}$ is irreducible and so, by \cite[Proposition 4.1.12]{Vo81},
\begin{equation} \label{eqn commute of characters}
\chi_{r_1,s_1}\times \chi_{r_2,s_2} \cong \chi_{r_2,s_2} \times \chi_{r_1,s_1} .
 \end{equation}
 The proof for the lemma is an elementary exercise by repeatedly using (\ref{eqn commute of characters}) and using the definition of a standard module in Section \ref{sec-ps}, which we leave for the reader.
\end{proof}

\begin{theorem} \label{thm-std}
Let $\lambda_L=(\lambda_{L,1}, \ldots, \lambda_{L,n})$ and $\lambda_R=(\lambda_{R,1}, \ldots, \lambda_{R,n})$. Suppose $m := \mathrm{ht}(X(\lambda_L,\lambda_R)) \neq -\infty$. Let $\Delta_i=[\lambda_{R,i}+\frac{1}{2}, \lambda_{L,i}-\frac{1}{2}]$. Then
$$\Gamma_{n,m}(X(\lambda_L,\lambda_R)) = \mathrm{St}(\Delta_1)\times \ldots \times \mathrm{St}(\Delta_n).$$
In particular, $\Gamma_{n,m}$ maps standard modules $X(\lambda_L, \lambda_R)$ to standard modules $\lambda(\mathfrak m(\lambda_L, \lambda_R))$.
\end{theorem}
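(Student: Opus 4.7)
The strategy is to factor the principal series as an iterated product of $\mathrm{GL}_1(\mathbb C)$-characters and then push everything through the functor $\Gamma_{n,m}$ using the parabolic induction compatibility already established. By definition \eqref{eq-psinduce},
\[
X(\lambda_L,\lambda_R) \;=\; \chi_{\lambda_{L,1},\lambda_{R,1}}\times \chi_{\lambda_{L,2},\lambda_{R,2}}\times\cdots\times \chi_{\lambda_{L,n},\lambda_{R,n}},
\]
so Corollary~\ref{cor preserve parabolic induction} gives
\[
\Gamma_{n,m}(X(\lambda_L,\lambda_R)) \;\cong\; \bigoplus_{m_1+\cdots+m_n=m} \Gamma_{1,m_1}(\chi_{\lambda_{L,1},\lambda_{R,1}})\times\cdots\times \Gamma_{1,m_n}(\chi_{\lambda_{L,n},\lambda_{R,n}}),
\]
reducing the computation to the rank-one case.

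Next, I would invoke Lemma~\ref{lem basic steinberg case} together with the vanishing observation in Section~\ref{ss standard module dimension} (also recorded in Lemma~\ref{lem height zero}) to identify each rank-one factor. Concretely, $\Gamma_{1,m_i}(\chi_{\lambda_{L,i},\lambda_{R,i}})$ is nonzero precisely when $m_i=\lambda_{L,i}-\lambda_{R,i}$, a nonnegative integer because the assumption $\mathrm{ht}(X(\lambda_L,\lambda_R))\neq -\infty$ forces every coordinate difference $\mu_i:=\lambda_{L,i}-\lambda_{R,i}$ to be in $\bZ_{\geq 0}$; in that case the rank-one value is $\mathrm{St}(\Delta_i)$ by Lemma~\ref{lem basic steinberg case}. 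Since $m=\mathrm{ht}(X(\lambda_L,\lambda_R))=\sum_i \mu_i$, exactly one tuple $(m_1,\ldots,m_n)$, namely $(\mu_1,\ldots,\mu_n)$, contributes to the direct sum, and this produces
\[
\Gamma_{n,m}(X(\lambda_L,\lambda_R)) \;\cong\; \mathrm{St}(\Delta_1)\times\cdots\times\mathrm{St}(\Delta_n),
\]
as claimed by the main equality.

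For the \emph{in particular} statement, I would invoke Lemma~\ref{lem alternate form of std module}: by permuting indices whose $\nu$-coordinates $\mathrm{Re}(\lambda_{L,i}+\lambda_{R,i})$ are equal, one can always arrange simultaneously that $X(\lambda_L,\lambda_R)$ is standard \emph{and} that $\mathrm{Re}(\lambda_{L,i})$ (hence $\mathrm{Re}(\lambda_{R,i}+\tfrac12)$ after choosing another arrangement within equal-$\nu$ blocks) is weakly decreasing in $i$. Under this arrangement the segments $\Delta_i=[\lambda_{R,i}+\tfrac12,\,\lambda_{L,i}-\tfrac12]$ have their starting points arranged in weakly decreasing real part, which is precisely the ordering singled out after \eqref{eqn prec ordering} as one that realises the product $\mathrm{St}(\Delta_1)\times\cdots\times\mathrm{St}(\Delta_n)$ as the standard Hecke module $\lambda(\mathfrak m(\lambda_L,\lambda_R))$. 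Because $\lambda(\mathfrak m)$ is independent of the choice of ordering (by \eqref{eqn unlinked commute}), the identification survives any subsequent reordering, giving the \emph{in particular} assertion.

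\textbf{Main obstacle.} The analytic content is actually already packaged in the two earlier inputs (parabolic induction preservation and the single-character computation), so the proof is essentially a direct assembly. The only real care is bookkeeping in the last paragraph: relating the two notions of ``standard'' (the complex-group condition on $\mathrm{Re}(\lambda_L+\lambda_R)$ and the Hecke-algebra condition on the ordering of segments). This is handled by Lemma~\ref{lem alternate form of std module} combined with the commutativity \eqref{eqn unlinked commute} of unlinked Steinberg inductions, but it is the one place where one must be careful not to conflate the two orderings.
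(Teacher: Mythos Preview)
Your argument for the main equality is correct and matches the paper's proof exactly: reduce via Corollary~\ref{cor preserve parabolic induction} to the rank-one case, then apply Lemma~\ref{lem basic steinberg case} and observe that the height condition forces a unique surviving summand.

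For the \emph{in particular} statement, however, your bookkeeping is muddled. You claim that by permuting only within equal-$\mathrm{Re}(\nu)$ blocks one can make $\mathrm{Re}(\lambda_{L,i})$ weakly decreasing globally; this is false (take $n=2$ with $(\lambda_L,\lambda_R)=((3,5),(1,-3))$: then $\mathrm{Re}(\nu)=(4,2)$ is already dominant but $\mathrm{Re}(\lambda_L)=(3,5)$ is increasing, and there are no equal-$\nu$ blocks to permute within). The parenthetical about $\mathrm{Re}(\lambda_{R,i}+\tfrac12)$ is also backwards, since within an equal-$\nu$ block making $\mathrm{Re}(\lambda_L)$ decrease forces $\mathrm{Re}(\lambda_R)$ to \emph{increase}.

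The clean fix is direct and avoids any permutation: if $X(\lambda_L,\lambda_R)$ is standard and $\Delta_i<\Delta_j$ for some $i<j$, then (linkedness forces integer differences, so) both $\lambda_{L,j}-\lambda_{L,i}$ and $\lambda_{R,j}-\lambda_{R,i}$ are strictly positive integers, whence $\mathrm{Re}(\nu_j)>\mathrm{Re}(\nu_i)$, contradicting dominance of $\mathrm{Re}(\nu)$. Thus the given ordering already satisfies \eqref{eqn prec ordering}, and the product is $\lambda(\mathfrak m(\lambda_L,\lambda_R))$ by definition. This is presumably what the paper's one-line citation of ``the definitions of standard modules and Lemma~\ref{lem alternate form of std module}'' is gesturing at.
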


\begin{proof}
This follows from
the definition of $X(\lambda_L,\lambda_R)$ in Equation \eqref{eq-psinduce}, Theorem \ref{thm isomophic of parabolic induction} and
Lemma \ref{lem basic steinberg case}. The second statement follows from the definitions of standard modules and Lemma \ref{lem alternate form of std module}.
\end{proof}

\begin{remark}
Note that $\Gamma_{n,m}$ maps tempered representations (see \cite[Page 286]{Knp86} for a definition of tempered representations) to tempered $\mathbb H_m$-modules or zero (see \cite[Definition 1.4]{Ev96} for the definition of tempered $\mathbb H_m$-modules).
In the next section, we will show that $\Gamma_{n,m}$ maps unitary modules to unitary modules.
\end{remark}

\section{Hermitian forms under $\Gamma_{n,m}$}

In this section, we prove that $\Gamma_{n,m}$ preserves Hermitianity and unitarity. 

\subsection{Hermitian form for Harish-Chandra modules}

\begin{definition} \label{def hermitian harish chandra}
Let $X$ be a Harish-Chandra module of $\mathrm{GL}_n(\mathbb C)$. The Hermitian dual $X^h$ of $X$ is the space of $K$-finite vectors of the complex conjugate linear functionals on $X$. There is a non-degenerate sesqui-linear pairing $\langle , \rangle: X \times X^h \rightarrow \mathbb C$ determining $(\mathfrak g, K)$-module structure on $X^h$:
\[ \langle x, (E_1,E_2)\cdot f \rangle =-\langle (\overline{E}_2,\overline{E}_1)\cdot x, f\rangle \]
\[   \langle x, k\cdot f \rangle = \langle k^{-1}\cdot x, f \rangle \]
for any $E_1, E_2 \in \mathfrak g_0$, $k \in K$ and $x \in X$ and $f\in X^h$. We say that $X$ is {\bf Hermitian} if $X \cong X^h$. 
Moreover, we say that $X$ is {\bf unitary} if $X \cong X^h$ and the induced pairing can be chosen to be positive-definite.
\end{definition}

We first see that taking the Hermitian dual preserves the height of an irreducible representation.

\begin{proposition}
$\mathrm{ht}(J(\lambda_L, \lambda_R)) =\mathrm{ht}(J(\lambda_L, \lambda_R)^h) $.
\end{proposition}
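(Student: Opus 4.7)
The plan is to identify the parameters of $J(\lambda_L, \lambda_R)^h$ explicitly and then exploit the $S_n$-invariance already built into the definition of $\mathrm{ht}$.

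First I would dispose of the base case: for a character $\chi_{a,b}$ of $\mathrm{GL}_1(\mathbb C)$, differentiating $\chi_{a,b}(e^{tE}) = e^{t(aE + b\bar E)}$ and using the identification $E \in \mathfrak g_0 \leftrightarrow (E, \bar E)$ shows that $(E_1, E_2)$ acts on $\chi_{a,b}$ by the scalar $aE_1 + bE_2$. A direct unwinding of Definition \ref{def hermitian harish chandra} (the complex conjugates in the formula force $\langle\cdot,\cdot\rangle$ to be sesquilinear in the second variable) then yields
\[ \chi_{a,b}^h \cong \chi_{-\bar b,\, -\bar a}. \]

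Second, I would extend to the principal series via parabolic induction. Using that Hermitian duality commutes with parabolic induction up to passage to the opposite Borel $\bar B = w_0 B w_0^{-1}$, and that this passage corresponds to applying $w_0$ (reversing the order) to the product of characters in (\ref{eq-psinduce}), one obtains
\[ X(\lambda_L, \lambda_R)^h \cong X(\lambda_L^*, \lambda_R^*), \]
where $(\lambda_L^*, \lambda_R^*)$ lies in the $S_n$-orbit of $(-\bar\lambda_R, -\bar\lambda_L)$. Taking Langlands quotients on both sides and invoking Theorem \ref{thm-Zh}(b) gives $J(\lambda_L, \lambda_R)^h \cong J(-\bar\lambda_R, -\bar\lambda_L)$ up to $S_n$-equivalence.

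Finally, I would conclude with a short computation. Set $\mu^* := (-\bar\lambda_R) - (-\bar\lambda_L) = \bar\mu$. The integrality assumption $\mu_i \in \mathbb Z$ forces $\bar\mu = \mu$, so $\mu^*$ lies in the $S_n$-orbit of $\mu$. Since both the condition ``all $\mu_i \geq 0$'' and the sum $\sum_i \mu_i$ depend only on the multiset $\{\mu_1, \ldots, \mu_n\}$, the equality $\mathrm{ht}(J(\lambda_L, \lambda_R)^h) = \mathrm{ht}(J(\lambda_L, \lambda_R))$ follows. The main hurdle is the second step, namely the compatibility between Hermitian duality and parabolic induction for $\mathrm{GL}_n(\mathbb C)$ viewed as a real group: the statement is classical, but care is required in unwinding the $\phi^L, \phi^R$ identifications of (\ref{eqn isomorphism 1}) to ensure the Hermitian-dual formula on the Lie algebra side matches the expected contragredient action on the induced module, together with the modular-character normalization.
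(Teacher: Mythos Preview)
Your proposal is correct and follows essentially the same route as the paper: identify $J(\lambda_L,\lambda_R)^h \cong J(-\overline{\lambda_R},-\overline{\lambda_L})$ and then observe that $\mu^* = \overline{\mu} = \mu$ (by integrality), so the heights agree. The only difference is that the paper simply cites this Hermitian-dual formula from \cite[Theorem 2.4]{Ba89}, whereas you rederive it via the $\mathrm{GL}_1$ case and compatibility with parabolic induction; your version is more self-contained but not a different argument.
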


\begin{proof}
One has $J(\lambda_L,\lambda_R)^h = J(-\overline{\lambda_R}, -\overline{\lambda_L})$ (c.f. \cite[Theorem 2.4]{Ba89}), so that $J(\lambda_L,\lambda_R)$ is Hermitian if and only if $(\lambda_L,\lambda_R) = w(-\overline{\lambda_R}, -\overline{\lambda_L})$ for some $w \in S_n$ by Theorem \ref{thm-Zh}(2). Now the proposition follows from computing $\mathrm{ht}(J(\lambda_L, \lambda_R))$ and $\mathrm{ht}(J(-\overline{\lambda}_R, -\overline{\lambda}_L))$ directly.
\end{proof}

\subsection{Hermitian form for modules of graded Hecke algebras}


We first need a suitable notion of Hermitian modules for $\mathbb H_m$ (see e.g. \cite{BCT}), translated from $p$-adic groups:

\begin{definition} 
An anti-involution $^*: \mathbb H_m \rightarrow \mathbb H_m$ is a linear map determined by:
\[  s_i^* =s_i,  \quad   y_i^* =-w_0(y_{m+1-i})w_0^{-1}.
\]
\end{definition}

\begin{definition}
Let $\pi$ be an $\mathbb H_m$-module. The Hermitian dual, denoted $\pi^*$, of $\pi$ is the space of complex conjugate linear functionals from $\pi$ to $\mathbb C$ determined by:
\[    (h\cdot f)(x)= f(h^*\cdot x) .
\]
In particular, there exists a non-degenerate sesqui-linear pairing $\langle , \rangle: \pi \times \pi^*\rightarrow \mathbb C$ such that
\[ \langle h\cdot x, f\rangle =\langle x, h^*\cdot f \rangle .\]
We say that $\pi$ is {\bf $*$-Hermitian} if $\pi \cong \pi^*$. We say that $\pi$ is {\bf $*$-unitary} if $\pi$ is $*$-Hermitian and the induced pairing on $\pi$ can be chosen to be positive-definite.
\end{definition}

\subsection{Preserving Hermitian structure}

One may compare the proof with \cite[Theorem 4.2.2]{CT11}.

\begin{theorem} \label{thm preserve hermitian structure}
Let $X$ be in $\mathcal{HC}_n$. Then
\[   \Gamma_{n,m}(X^h) \cong \Gamma_{n,m}(X)^* .\]
\end{theorem}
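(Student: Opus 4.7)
My plan is to construct a natural $\mathbb H_m$-equivariant pairing
\[\mathcal P: \Gamma_{n,m}(X^h) \times \Gamma_{n,m}(X) \to \mathbb C\]
realizing the desired isomorphism. Let $(\cdot,\cdot)_V$ be the standard $U(n)$-invariant Hermitian form on $V \cong \mathbb C^n$ and let $\langle\cdot,\cdot\rangle_X: X \times X^h \to \mathbb C$ be the canonical pairing from Definition \ref{def hermitian harish chandra}. Since both are $K$-invariant, the tensor-product pairing
\[\mathcal P(f \otimes v_1 \otimes \cdots \otimes v_m,\ x \otimes w_1 \otimes \cdots \otimes w_m) := \langle x, f\rangle_X \prod_{i=1}^m (v_i, w_i)_V\]
on $(X^h \otimes V^{\otimes m}) \times (X \otimes V^{\otimes m})$ descends to the $K$-invariant subspaces. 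Non-degeneracy on the $K$-invariants then follows from $K$-semisimplicity of the ambient modules (and finite-dimensionality of $\Gamma_{n,m}(X)$, which holds because $X$ is admissible and $V^{\otimes m}$ has finite $K$-spectrum).

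The main task is to verify that $\mathcal P$ intertwines the two $\mathbb H_m$-actions via $*$, i.e.\ $\mathcal P(h\alpha,\beta) = \mathcal P(\alpha, h^*\beta)$, and it suffices to check this on the generators. For $s_i$, with $\Theta(s_i) = -\Omega_{i,i+1}$ acting on two $V$-slots: the Hermitian adjoint of the matrix $E_{pq}$ under $(\cdot,\cdot)_V$ is $E_{qp}$, so reindexing $p \leftrightarrow q$ in the Casimir sum shows $\Omega_{i,i+1}$ is self-adjoint under $\mathcal P$, matching $s_i^* = s_i$. The same reasoning handles each $\Omega_{k,l}$ with $k \geq 1$ appearing in $\Theta(y_l) = \sum_{k=0}^{l-1}\Omega_{k,l} + n/2$. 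The nontrivial term is $\Omega_{0,l}$, which couples the $X$-slot with the $l$-th $V$-slot; using Definition \ref{def hermitian harish chandra} to transport the $(0, E_{pq})$-action from $X^h$ to $X$ (picking up a sign) and the Hermitian-adjoint computation on $V_l$, one finds
\[\mathcal P(\Omega_{0,l}\alpha, \beta) = -\mathcal P\bigl(\alpha,\, \widetilde{\Omega}_{0,l}\beta\bigr), \qquad \widetilde{\Omega}_{0,l} := \sum_{p,q}\Delta^L_0(E_{pq})\,\Delta_l(E_{pq}),\]
where $\Delta^L_0(E_{pq})$ denotes the action of $(E_{pq}, 0)$ on the $X$-slot, in the notation of Section \ref{ss compute lower part}.

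The hard part is to reconcile $\widetilde\Omega_{0,l}$ with the Casimirs that actually appear in $\Theta(\mathbb H_m)$. The key is the $K$-invariance of $\beta$: since $\mathfrak k = \{(E,-E^t)\}$ by \eqref{identification} annihilates $\beta$, applying the Leibniz rule to the action of $(E_{pq}, -E_{qp}) \in \mathfrak k$ on $X \otimes V^{\otimes m}$ yields
\[\Delta^L_0(E_{pq})\beta = \Delta_0(E_{qp})\beta + \sum_{k=1}^m \Delta_k(E_{qp})\beta.\]
Substituting into $\widetilde\Omega_{0,l}\beta$ and invoking the quadratic Casimir identity $\sum_{p,q} E_{pq}E_{qp} = nI$ on $V$, together with the symmetry $\Omega_{l,k} = \Omega_{k,l}$, one obtains
\[\widetilde\Omega_{0,l}\beta = \Omega_{0,l}\beta + n\beta + \sum_{\substack{1 \leq k \leq m \\ k \neq l}}\Omega_{k,l}\beta.\]
Assembling all contributions to $\mathcal P(y_l\alpha,\beta)$, the Casimirs with $1 \leq k \leq l-1$ cancel against their counterparts coming from the $\widetilde\Omega_{0,l}$-correction, leaving precisely $-\Omega_{0,l} - \sum_{k=l+1}^m\Omega_{k,l} - n/2$. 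One checks independently that this equals $\Theta(y_l^*) = -\Theta(w_0 y_{m+1-l}w_0^{-1})$, using that $\Theta(w_0)$ acts on $V^{\otimes m}$ as the signed long-element permutation (the signs cancel under conjugation). This confirms $\mathbb H_m$-equivariance on the generators and hence on all of $\mathbb H_m$, so $\mathcal P$ induces the desired isomorphism.
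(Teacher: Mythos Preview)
Your argument is correct and follows essentially the same route as the paper: both construct the tensor-product pairing from $\langle\,,\,\rangle_X$ and the standard form on $V$, dispatch the $s_i$ case trivially, and for $y_l$ combine (i) the Hermitian-dual rule to move the $X^h$-action to $X$, (ii) the triviality of $(E,0)$ on $V$, (iii) the identity $\sum_{p,q} E_{pq}E_{qp}=nI$ on $V$, and (iv) $K$-invariance via $(E,-E^t)\in\mathfrak k$. The only organizational difference is that you transport $\Theta(y_l)$ to the other side and identify the result with $\Theta(y_l^*)$ directly, whereas the paper expands both $\langle \Theta(y_l)p,p'\rangle'$ and $\langle p,\Theta(y_l^*)p'\rangle'$, subtracts, and shows the remainder is $\sum_{i,j}\langle \Delta_l(0,E_{ij})\cdot((-E_{ji},E_{ij})\cdot p),p'\rangle'$, which vanishes by $K$-invariance of $p$; this is the same computation rearranged.
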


\begin{proof}
We write $\langle , \rangle$ for the natural pairing between $X$ and $X^h$ as in Definition \ref{def hermitian harish chandra}. Recall that $\langle , \rangle_V$ is the standard inner product on $V$ defined before Lemma \ref{lem lower lower action 3}.

We define a Hermitian form $\langle , \rangle'$ on $X\otimes V^{\otimes m}$:
\begin{align*}
    \langle f\otimes e_{i_1} \otimes \ldots \otimes e_{i_m} , f' \otimes e_{i'_1}\otimes \ldots \otimes e_{i'_m} \rangle' =\langle f, f' \rangle \cdot \langle e_{i_1}, e_{i_1'} \rangle_V \cdot \ldots  \cdot \langle e_{i_m}, e_{i_m'} \rangle_V .
\end{align*}
This descends to a pairing on $\Gamma_{n,m}(X)\times \Gamma_{n,m}(X^h)$ by taking the $K$-invariant.

We have to check that: for $p \in \Gamma_{n,m}(X)$ and $p' \in \Gamma_{n,m}(X^h)$,
\begin{align} \label{eqn dual weyl group}
    \langle \Theta(s_{l,l+1})\cdot p, p' \rangle' = \langle p, \Theta(s_{l,l+1})\cdot p' \rangle' 
\end{align}
\begin{align} \label{eqn dual polynomial}
    \langle \Theta(y_l)\cdot p, p' \rangle' =\langle p, \Theta(y_l^*)\cdot  p' \rangle' .
\end{align}
Checking (\ref{eqn dual weyl group}) is straightforward. 

We now consider (\ref{eqn dual polynomial}). 
We first consider $p$ takes the form:
\[   f \otimes e_{i_1} \otimes \ldots \otimes e_{i_m}\]
and $p'$ takes the form:
\[  f' \otimes e_{i_1'}\otimes \ldots \otimes e_{i_m'}\]
(which are not necessarily in $\Gamma_{n,m}(X)$ or $\Gamma_{n,m}(X^h)$).

Then 
\begin{align*}
\langle \Theta(y_l)\cdot p, p' \rangle' =  & \sum_{1 \leq i,j\leq n} \langle (0,E_{ij})\cdot f, f' \rangle \cdot \langle e_{i_1}, e_{i_1'}\rangle_V \cdot \ldots \cdot \langle (0,E_{ji})\cdot e_{i_l}, e_{i_l'} \rangle_V \cdot \ldots \cdot \langle e_{i_m}, e_{i_m'}\rangle_V  \\
 & \quad + \sum_{1\leq i,j\leq n} \sum_{1 \leq k <l} \langle f, f' \rangle \cdot \ldots \cdot \langle (0,E_{ij})\cdot e_{i_k}, e_{i_k'} \rangle_V \cdot \ldots \cdot \langle (0,E_{ji})\cdot e_{i_l}, e_{i_l'} \rangle_V \cdot \ldots \cdot \langle e_{i_m}, e_{i_m'} \rangle _V \\
=&   \sum_{1 \leq i,j\leq n} \langle (-E_{ji},E_{ij})\cdot f, f' \rangle \cdot \langle e_{i_1}, e_{i_1'}\rangle_V \cdot \ldots  \cdot \langle (0,E_{ji})\cdot e_{i_l}, e_{i_l'} \rangle_V \cdot \cdot \langle e_{i_m}, e_{i_m'}\rangle_V \\
 & \quad + \sum_{1\leq i,j\leq n} \sum_{1 \leq k <l} \langle f, f' \rangle \cdot \ldots \cdot \langle 
(-E_{ji},E_{ij})\cdot e_{i_k}, e_{i_k'} \rangle_V \cdot \ldots \cdot \langle (0,E_{ji})\cdot e_{i_l}, e_{i_l'} \rangle_V \cdot \ldots \cdot \langle e_{i_m}, e_{i_m'} \rangle_V  \\
& \quad  + \sum_{1 \leq i,j \leq n} \langle (E_{ji},0)\cdot f, f' \rangle \cdot \ \ldots \cdot \langle (0, E_{ji})\cdot  e_{i_l}, e_{i_l'} \rangle_V  \cdot \ldots \cdot \langle e_{i_m}, e_{i_m'} \rangle_V +\frac{n}{2} \langle p, p' \rangle' 
\end{align*}
where the second equality follows by using $(E_{ji},0)$ acts by zero on $V$. The first three terms in the last sum will be denoted by (I), (II) and (III) respectively.

We now proceed to compute $\langle p, \Theta(y_l^*)\cdot p'\rangle'$. Note that the action of $w_0$ simply permutes those $e_{i_a'}$ terms. Now, 

\begin{align*}
& \langle p, \Theta(y_l^*)\cdot p' \rangle' \\
=&  - \langle p, \Theta(w_0y_{m+1-l}w_0)\cdot p' \rangle'         \\
=& -\sum_{1\leq i,j \leq n} \langle f, (0,E_{ij})\cdot f'\rangle \cdot \langle e_{i_1}, e_{i_1'} \rangle_V \cdot \ldots \cdot \langle e_{i_l}, (0,E_{ji}).e_{i_l'} \rangle_V \cdot \ldots \cdot \langle e_{i_m}, e_{i_m'} \rangle_V     \\   
  & \quad  - \sum_{1\leq i,j \leq n} \sum_{l<k \leq m}  \langle f, f' \rangle \cdot \ldots \cdot \langle e_{i_l}, (0,E_{ji})\cdot  e_{i_l'} \rangle_V \cdot \ldots \cdot \langle e_{i_k}, (0,E_{ij})\cdot e_{i_k}' \rangle_V \cdot \ldots \cdot \langle e_{i_m}, e_{i_m'} \rangle_V  \\ 
 & \quad -\frac{n}{2}\langle p, p' \rangle \\
=& -\sum_{1\leq i,j \leq n} \langle f, (0,E_{ij})\cdot f'\rangle \cdot \langle e_{i_1}, e_{i_1'} \rangle_V \cdot \ldots \cdot \langle e_{i_l}, (0,E_{ji})\cdot e_{i_l'} \rangle_V \cdot \ldots \cdot \langle e_{i_m}, e_{i_m'} \rangle_V     \\   
  & \quad  - \sum_{1\leq i,j \leq n} \sum_{l<k \leq m}  \langle f, f' \rangle \cdot \ldots \cdot \langle e_{i_l}, (0,E_{ji})\cdot  e_{i_l'} \rangle_V \cdot \ldots \cdot \langle e_{i_k}, (-E_{ji},E_{ij})\cdot e_{i_k}' \rangle_V \cdot \ldots \cdot \langle e_{i_m}, e_{i_m'} \rangle_V  \\ 
 & \quad -\frac{n}{2}\langle p, p' \rangle ,
\end{align*}
where the last equation again follows by $(E_{ij},0)$ acts by zero on $V$. We shall label the first two terms in the last sum by (IV) and (V) (without the subtractions). 

We now consider one more formula:
\begin{align} \label{eqn sum terms equal to n}
\sum_{1 \leq i, j \leq n}  \langle f, f' \rangle \cdot \langle e_{i_1}, e_{i_1'} \rangle_V \cdot  \ldots \cdot \langle (0,E_{ji})\cdot (-E_{ji}, E_{ij})\cdot  e_{i_l}, e_{i_l}' \rangle_V \cdot \ldots \cdot \langle e_{i_l}, e_{i_l}' \rangle_V = & n\cdot \langle p, p' \rangle,
\end{align}
which follows from $(0,E_{ji})\cdot (-E_{ji}, E_{ij})\cdot e_{i_l}=e_{i_l}$ if $j=i_l$ and $=0$ otherwise.

For computing $\langle \Theta(y_l) \cdot p, p' \rangle-\langle p, \Theta(y_l^*)\cdot p' \rangle$, we express the two terms in the above ways. Note that (III) is cancelled with (IV) by using $\langle (0, E_{ji})\cdot f, f' \rangle =-\langle f, (0, E_{ji})\cdot f' \rangle$ and $\langle e_{i_l}, (0,E_{ji})\cdot e_{i_l'}\rangle_V=\langle (0,E_{ij})\cdot e_{i_l}, e_{i_l'}\rangle$. Now the terms (I), (II) and (V) with the formula (\ref{eqn sum terms equal to n}) give that 
\begin{align} \label{eqn a formula for hermitian}
     \langle \Theta(y_l) \cdot p, p' \rangle'-\langle p, \Theta(y_l^*) \cdot p' \rangle' =\sum_{1\leq i, j \leq n} \langle \Delta_l((0,E_{ij}))\cdot ((-E_{ji}, E_{ij})\cdot p), p' \rangle',
\end{align}
We emphasize here that $(-E_{ji}, E_{ij})\cdot p$ means the Lie algebra action of $(-E_{ji}, E_{ij})$ on $p \in X\otimes V^{\otimes m}$.

Now, we consider $p \in \Gamma_{n,m}(X)$ (and $p' \in \Gamma_{n,m}(X^h)$) and the formula (\ref{eqn a formula for hermitian}) extends linearly. However, by using the $K$-invariant, we have that $(-E_{ji}, E_{ij}) \cdot p=0$. In other words, $\langle \Theta(y_l)\cdot p,p'\rangle'=\langle p, \Theta(y_l^*) \cdot p'\rangle'$ as desired.
\end{proof}

\begin{corollary} \label{cor unit}
$\Gamma_{n,m}$ maps Hermitian Harish-Chandra modules to $*$-Hermitian $\mathbb{H}_m$-modules or zero, and maps unitary Harish-Chandra modules to $*$-unitary $\mathbb{H}_m$-modules or zero.
\end{corollary}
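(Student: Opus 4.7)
The plan is to observe that both halves of the corollary are essentially bookkeeping consequences of Theorem \ref{thm preserve hermitian structure}, with the only extra ingredient needed for unitarity being positive-definiteness of the tensor pairing constructed in its proof.

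For the first assertion, suppose $X \in \mathcal{HC}_n$ is Hermitian, so there exists a $(\mathfrak g, K)$-module isomorphism $\phi : X \to X^h$. Applying the functor $\Gamma_{n,m}$ to $\phi$ gives an $\mathbb H_m$-module isomorphism $\Gamma_{n,m}(X) \cong \Gamma_{n,m}(X^h)$, and composing with the natural isomorphism $\Gamma_{n,m}(X^h) \cong \Gamma_{n,m}(X)^*$ from Theorem \ref{thm preserve hermitian structure} exhibits $\Gamma_{n,m}(X)$ as $*$-Hermitian (or as zero if $\Gamma_{n,m}(X) = 0$).

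For the second assertion, the key point is to trace through the explicit pairing $\langle\,,\,\rangle'$ constructed in the proof of Theorem \ref{thm preserve hermitian structure}. Assume $X$ is unitary with positive-definite invariant form $(\,,\,)_X$, and let $\phi : X \to X^h$ be the induced isomorphism, characterized by $\langle w, \phi(v)\rangle = (w,v)_X$. Transporting $\langle\,,\,\rangle'$ through $\mathrm{id} \otimes \phi^{\otimes 1}$ turns it into the tensor product Hermitian form $(\,,\,)_X \otimes \langle\,,\,\rangle_V^{\otimes m}$ on $X \otimes V^{\otimes m}$; since $(\,,\,)_X$ is positive-definite by hypothesis and the standard Hermitian form on $V$ is positive-definite, so is the tensor product. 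The $\mathbb H_m$-invariant pairing on $\Gamma_{n,m}(X) \times \Gamma_{n,m}(X)$ provided by Theorem \ref{thm preserve hermitian structure} is the restriction of this form to the subspace $(X \otimes V^{\otimes m})^K$, and restriction of a positive-definite form to a subspace is still positive-definite. Hence $\Gamma_{n,m}(X)$ is $*$-unitary (or zero).

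No step presents real difficulty; the only mild subtlety is verifying that the $\mathbb H_m$-invariant pairing one obtains on $\Gamma_{n,m}(X)$ from the composition $\Gamma_{n,m}(X) \cong \Gamma_{n,m}(X^h) \cong \Gamma_{n,m}(X)^*$ is literally the restriction of the tensor pairing $\langle\,,\,\rangle'$, which is transparent from the construction in the proof of Theorem \ref{thm preserve hermitian structure}. No new computation with the $y_l$'s or $s_i$'s is needed; $\mathbb H_m$-invariance of the pairing was already established there.
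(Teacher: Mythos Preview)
Your proposal is correct and is precisely the argument the paper intends: the corollary is stated without proof because the Hermitian case is immediate from Theorem \ref{thm preserve hermitian structure}, and the unitary case follows once one notes that the pairing $\langle\,,\,\rangle'$ built in that proof is a tensor product of positive-definite forms and hence remains positive-definite upon restriction to the $K$-invariants. You have simply made explicit what the paper leaves to the reader.
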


\begin{proof}
The Hermitian part follows from definitions and Theorem \ref{thm preserve hermitian structure}. The unitary part follows from the Hermitian form constructed in Theorem \ref{thm preserve hermitian structure} can also be chosen to be positive-definite if we start from a positive-definite Hermitian form on $X$.
\end{proof}

We will discuss non-unitary results in Section \ref{subsec nonunit} after proving the preservation of irreducibility under $\Gamma_{n,m}$.

\begin{remark}
The Lefschetz principle for the unitary dual is also explicated in \cite{Ta09}.
\end{remark}

\begin{remark}
The unitary characters of $\mathrm{GL}_{n'}(\mathbb C)$ are the building blocks of unitary representations of $\mathrm{GL}_n(\mathbb C)$ in the sense that all unitary representations are products of those characters. We shall see from Theorem \ref{thm irreducibility of functor} that the image of a unitary character of $\mathrm{GL}_{n'}(\mathbb C)$ is a (unitary) Speh module of $\mathbb H_m$ (defined in Section \ref{ss dirac series}). It is well-known from \cite{Ta86} that Speh modules play a similar role of building blocks for the unitary dual of $\mathbb H_m$.
\end{remark}

\begin{remark}   \label{rmk-bullet}
As seen in \cite{ALTV20}, even one is interested in studying the unitary dual problem, it is useful to study other Hermitian forms. The anti-involution on $\mathrm{GL}_n(\mathbb C)$ is given by:
\[   g \mapsto \bar{g}^t \]
i.e. the transpose with the complex conjugation. For Harish-Chandra modules, the anti-involution corresponds to the anti-involution $(E,E')\mapsto (\overline{E}^t, \overline{E'}^t)$. On the other hand, there is an anti-involution on $\mathbb H_m$ which is the identity on the generators $s_l$ and $y_l$ in Definition \ref{def gaha type a}. It is more straightforward to check that the Hermitian forms associated to the anti-involutions correspond to each other under the functor $\Gamma_{n,m}$ (c.f. \cite{Su98}).
\end{remark}





\section{Bernstein-Zelevinsky derivatives and Tensor products} \label{s bz derivatives}


We introduce the tool of Bernstein-Zelevinsky derivatives and connect to the tensor product problem in this section. Some connections to the original notion of Bernstein-Zelevinsky derivatives for $p$-adic groups will be discussed in Section \ref{s lefeschetz principle dirac}.

\subsection{Generalized BZ derivatives} \label{ss generalized bz}

For each $\tau \in \mathrm{Irr}(S_i)$, define
\[ \mathbf{BZ}_{\tau}: \mathcal H_n \rightarrow \mathcal H_{n-i} \]
is defined as:
\[   \mathbf{BZ}_{\tau}(\pi)=\mathrm{Hom}_{S_i}(\tau, \pi),
\]
where $\pi$ is regarded as a $\mathbb C[S_i]$-module via the embedding $s_l \mapsto s_l$. The  $\mathbb H_{n-i}$-module structure on $\widetilde{\pi}=\mathbf{BZ}_{\tau}(\pi)$ is via the following actions:
\[    (y_k \cdot_{\widetilde{\pi}} f)(x):=   y_{i+k}\cdot_{\pi} f(x) , \quad    (s_{k} \cdot_{\widetilde{\pi}} f)x := s_{i+k} \cdot_{\pi} f(x) 
\]
for $x \in \tau$. 

When $\tau$ is the sign representation of $S_i$, it is studied from the viewpoint of representations of $p$-adic groups \cite{CS19, Ch22+}. One nice property is the following:

\begin{proposition} \label{prop highest derivatives}
Let $\mathfrak m$ be a multisegment. Write $\mathfrak m=\left\{ \Delta_1, \ldots, \Delta_k \right\}$. For each $\Delta_i=[a_i, b_i]$, write ${}^-\Delta=[a_i+1, b_i]$ and ${}^-\mathfrak m=\left\{ {}^-\Delta_1, \ldots, {}^-\Delta_k \right\}$. Let $\mathrm{triv}=\mathrm{triv}_k$ be the trivial representation of $S_k$. Then 
\[    \mathbf{BZ}_{\mathrm{triv}}(\mathrm{St}(\mathfrak m))\cong \mathrm{St}({}^-\mathfrak m) .
\]
\end{proposition}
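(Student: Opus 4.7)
The plan is to exploit the exactness of $\mathbf{BZ}_{\mathrm{triv}}$ together with the quotient structure $\lambda(\mathfrak m)\twoheadrightarrow \mathrm{St}(\mathfrak m)$. Since $|S_k|$ is invertible in $\mathbb C$, the functor $\mathbf{BZ}_{\mathrm{triv}}(\pi)=\pi^{S_k}$ is exact (it is the image of the projector $\frac{1}{k!}\sum_{w\in S_k} w$), so applying it to $\lambda(\mathfrak m)\twoheadrightarrow \mathrm{St}(\mathfrak m)$ yields a surjection $\mathbf{BZ}_{\mathrm{triv}}(\lambda(\mathfrak m))\twoheadrightarrow \mathbf{BZ}_{\mathrm{triv}}(\mathrm{St}(\mathfrak m))$. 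The argument then splits into two parts: (a) identify $\mathbf{BZ}_{\mathrm{triv}}(\lambda(\mathfrak m))$ with $\lambda({}^-\mathfrak m)$ as $\mathbb H_{n-k}$-modules, and (b) show this identification descends to an isomorphism $\mathbf{BZ}_{\mathrm{triv}}(\mathrm{St}(\mathfrak m))\cong \mathrm{St}({}^-\mathfrak m)$.

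For (a), I would use the realization $\lambda(\mathfrak m)=\mathbb H_n\otimes_{\mathbb H_{|\Delta_1|}\otimes\cdots\otimes \mathbb H_{|\Delta_k|}} (\mathrm{St}(\Delta_1)\boxtimes\cdots\boxtimes \mathrm{St}(\Delta_k))$. Because each $\mathrm{St}(\Delta_i)$ is one-dimensional with cyclic generator $v_\circ$, a basis of $\lambda(\mathfrak m)$ is given by $\{w\otimes v_\circ\}$ where $w$ runs over minimal coset representatives for $(S_{|\Delta_1|}\times\cdots\times S_{|\Delta_k|})\backslash S_n$. The left endpoints $a_1,\ldots,a_k$ of the segments appear as eigenvalues of $y_1, y_{|\Delta_1|+1},\ldots$ on $v_\circ$, and $S_k=\langle s_1,\ldots,s_{k-1}\rangle$ permutes these first-position slots. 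Applying the $S_k$-symmetrizer and repeatedly using the relation $s_iy_i-y_{i+1}s_i=1$ to commute the generators past symmetrized vectors, one obtains a space on which the subalgebra $\mathbb H_{n-k}\hookrightarrow \mathbb H_n$ (via $y_l\mapsto y_{k+l}$, $s_l\mapsto s_{k+l}$) acts exactly as on $\mathrm{St}({}^-\Delta_1)\times\cdots\times\mathrm{St}({}^-\Delta_k)=\lambda({}^-\mathfrak m)$; segments $\Delta_i$ of length one simply contribute an empty ${}^-\Delta_i$ and disappear, as the associated first slot is entirely absorbed by the symmetrization.

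The main obstacle is step (b): showing that the isomorphism in (a) matches the two radicals, i.e.\ intertwines the kernels of $\lambda(\mathfrak m)\twoheadrightarrow \mathrm{St}(\mathfrak m)$ and $\lambda({}^-\mathfrak m)\twoheadrightarrow \mathrm{St}({}^-\mathfrak m)$. By the Zelevinsky-Rogawski classification \cite{Ze80, Ro86}, the radical of $\lambda(\mathfrak m)$ is generated by relations coming from pairs of linked segments of $\mathfrak m$. Linkage of $\Delta_i,\Delta_j$ (both of length $\geq 2$) forces linkage of ${}^-\Delta_i,{}^-\Delta_j$; the length-one segments drop out of ${}^-\mathfrak m$ without disturbing the remaining linkage structure. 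Tracking these generating relations through the identification in (a) yields that the kernel on the left-hand side maps onto the kernel on the right-hand side. Alternatively, and more robustly, one can exhibit a distinguished cyclic vector in $\mathbf{BZ}_{\mathrm{triv}}(\mathrm{St}(\mathfrak m))$ whose image in $\mathrm{St}({}^-\mathfrak m)$ is explicitly nonzero; by irreducibility of $\mathrm{St}({}^-\mathfrak m)$ the induced map is surjective, and a dimension count on the $S_k$-invariants side via Lemma \ref{lem product hecke algebra} forces it to be an isomorphism.
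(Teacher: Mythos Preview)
The paper does not attempt a direct proof here: it simply cites Zelevinsky's highest-derivative theorem \cite[Theorem~8.1]{Ze80}, its Hecke-algebra translation \cite[Theorem~6.9]{CS19}, and the Iwahori--Matsumoto involution to pass from $\mathrm{sgn}$ to $\mathrm{triv}$. Your self-contained approach is more ambitious, but both steps have gaps.

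In step~(a) your description of the $S_k$-action is off: $S_k=\langle s_1,\dots,s_{k-1}\rangle$ acts on the first $k$ tensor positions, not on the ``first-position slots'' of the segments (which sit at positions $1,\,|\Delta_1|+1,\,|\Delta_1|+|\Delta_2|+1,\dots$). The isomorphism $\mathbf{BZ}_{\mathrm{triv}_k}(\lambda(\mathfrak m))\cong\lambda({}^-\mathfrak m)$ is true, but one obtains it from the Mackey/Leibniz filtration for $\mathbf{BZ}$ on a product combined with $\mathbf{BZ}_{\mathrm{triv}_j}(\mathrm{St}(\Delta))=0$ for $j\geq 2$, not from the coordinate manipulation you sketch. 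Step~(b) is the real difficulty, and neither route closes it. The linkage claim in~(b1) is false: $[0,1]$ and $[2,3]$ are linked (both of length~$2$), yet ${}^-[0,1]=[1,1]$ and ${}^-[2,3]=[3,3]$ are not. More seriously, even with~(a) in hand the surjection $\lambda({}^-\mathfrak m)\twoheadrightarrow\mathbf{BZ}_{\mathrm{triv}_k}(\mathrm{St}(\mathfrak m))$ does not pin down the target as $\mathrm{St}({}^-\mathfrak m)$: composition factors $\mathrm{St}(\mathfrak n)$ of the radical of $\lambda(\mathfrak m)$ can have $k$ segments as well (e.g.\ $\mathfrak m=\{[0,2],[1,3]\}$, $\mathfrak n=\{[0,3],[1,2]\}$), so their $k$-th derivative need not vanish, and identifying $\mathbf{BZ}_{\mathrm{triv}_k}$ of the radical with the radical of $\lambda({}^-\mathfrak m)$ is precisely the statement you are trying to prove. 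For~(b2), Lemma~\ref{lem product hecke algebra} computes dimensions of induced modules, not of $S_k$-invariants inside an irreducible, so it does not supply the needed count. The missing ingredient is the irreducibility of the highest derivative---the substantive content of Zelevinsky's theorem---whose proof uses the right adjoint to the derivative functor and an induction, not segment combinatorics alone.
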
 

\begin{proof}
This follows from the highest derivative of an irreducible representation due to Zelevinsky \cite[Theorem 8.1]{Ze80}, and the translation to $\mathbb H_m$ in \cite[Theorem 6.9]{CS19}. Note that to translate the version from the sign representation to $\mathrm{triv}$, one also applies the Iwahori-Matsumoto involution (see more discussions in the proof of Theorem \ref{thm bz multiplicity one}).
\end{proof}

\subsection{Arakawa-Suzuki functor and Bernstein-Zelevinsky derivatives} \label{ss arakawa suzuki bz derivatives}

Let $\mathrm{Irr}(S_i)$ be the set of irreducible representations of $S_i$. Recall that, for $\tau \in \mathrm{Irr}(S_i)$, $\mathbb S_{\tau}(V)=\mathrm{Hom}_{S_i}(\tau, V^{\otimes i})$, where $S_i$ acts by sign permutation on $V^{\otimes i}$ i.e. $w.(v_1\otimes \ldots \otimes v_i)=(-1)^{l(w)}v_{w(1)}\otimes \ldots \otimes v_{w(i)}$. We regard $\mathbb S_{\tau}(V)$ as a natural $\mathrm{GL}_n(\mathbb C)$-representation, and the classical Schur-Weyl duality asserts that $\mathbb S_{\tau}(V)$ is either irreducible or zero.

\begin{theorem} \label{thm bz undr arakawa suzuki}
Let $\tau \in \mathrm{Irr}(S_i)$. For $m \geq i$, the following diagram is commutative:
\[
\xymatrix{  \mathcal{HC}_n \ar[r]^{T_{\tau}} \ar[d]^{\Gamma_{n,m}} &  \mathcal{HC}_n \ar[d]^{\Gamma_{n,m-i}} \\  \mathcal H_m \ar[r]^{\mathbf{BZ}_{\tau}}   &  \mathcal H_{m-i} 
}
\]
where $T_{\tau}$ is the functor defined by $T_{\tau}(X) := X \otimes \mathbb S_{\tau}(V)$.
\end{theorem}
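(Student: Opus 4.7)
The plan is to build the natural isomorphism in three stages. First, I identify how the subalgebra $\mathbb C[S_i] \subset \mathbb H_m$ (generated by $s_1, \ldots, s_{i-1}$) acts on $X \otimes V^{\otimes m}$ under the Arakawa--Suzuki realization. Second, I convert $\mathbf{BZ}_\tau \circ \Gamma_{n,m}$ into $\Gamma_{n,m-i} \circ T_\tau$ by exchanging $\mathrm{Hom}_{S_i}$ past $\mathrm{Hom}_K$. Third, I verify that the $\mathbb H_{m-i}$-action on $\mathbf{BZ}_\tau(\Gamma_{n,m}(X))$ (via the shifted embedding $y_k \mapsto y_{i+k}$, $s_k \mapsto s_{i+k}$ from Section \ref{ss gaha}) matches the Arakawa--Suzuki action on $\Gamma_{n,m-i}(T_\tau(X))$.

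For the first stage, Lemma \ref{lem std repn} gives that $(0, E)$ acts on $V$ by the matrix $E$, so $\Omega_{l,l+1} = \sum_{a,b} E_{ab} \otimes E_{ba}$ acts on the $l, l+1$ tensor positions of $V^{\otimes m}$ as the flip $e_p \otimes e_q \mapsto e_q \otimes e_p$. Hence $\Theta(s_l) = -\Omega_{l,l+1}$ acts as $-1$ times the transposition of positions $l, l+1$, which is precisely the sign-permutation action on $V^{\otimes i}$ used to define $\mathbb S_\tau(V)$. This action commutes with the diagonal $K$-action and is trivial on $X$ and on the last $m-i$ factors of $V$.

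For the second stage, the observation above lets me exchange $\mathrm{Hom}_{S_i}$ and $\mathrm{Hom}_K$, and move $\mathrm{Hom}_{S_i}(\tau, -)$ through the tensor product by pulling out the tensor factors on which $S_i$ acts trivially:
\begin{align*}
\mathbf{BZ}_\tau(\Gamma_{n,m}(X))
&= \mathrm{Hom}_{S_i}\bigl(\tau, \mathrm{Hom}_K(\mathrm{triv}, X \otimes V^{\otimes i} \otimes V^{\otimes(m-i)})\bigr) \\
&\cong \mathrm{Hom}_K\bigl(\mathrm{triv}, X \otimes \mathrm{Hom}_{S_i}(\tau, V^{\otimes i}) \otimes V^{\otimes(m-i)}\bigr) \\
&= \mathrm{Hom}_K\bigl(\mathrm{triv}, T_\tau(X) \otimes V^{\otimes(m-i)}\bigr) \;=\; \Gamma_{n, m-i}(T_\tau(X)).
\end{align*}
Naturality in $X$ is automatic from the construction.

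For the third stage, the shifted $s_k$ acts as $-\Omega_{i+k, i+k+1}$, which after reindexing the last $m-i$ positions coincides with the operator $-\Omega_{k, k+1}^{\mathrm{new}}$ defining the $s_k$-action on $\Gamma_{n,m-i}(T_\tau(X))$. For $y_k$, I need to match $\sum_{0 \leq x < i+k} \Omega^{\mathrm{old}}_{x, i+k} + \tfrac{n}{2}$ with $\sum_{0 \leq x' < k} \Omega^{\mathrm{new}}_{x', k} + \tfrac{n}{2}$. The terms $\Omega^{\mathrm{new}}_{x', k}$ for $1 \leq x' < k$ correspond to $\Omega^{\mathrm{old}}_{x'+i, i+k}$ by reindexing, while $\Omega^{\mathrm{new}}_{0, k}$ involves the diagonal $\mathfrak g_0$-action on the augmented $0$-th factor $X \otimes \mathbb S_\tau(V)$; since $\mathbb S_\tau(V)$ inherits its $\mathfrak g_0$-action from the $\sum_{l=1}^{i} \Delta_l(\cdot)$-action on $V^{\otimes i}$, this single Casimir expands to $\sum_{x=0}^{i} \Omega^{\mathrm{old}}_{x, i+k}$, and the two sums agree. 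I expect the crux of the argument to be the first stage --- recognizing that the embedding $\mathbb C[S_i] \subset \mathbb H_m$ used to define $\mathbf{BZ}_\tau$ is precisely the sign-permutation that defines the Schur functor $\mathbb S_\tau$. Once this link is identified, the remaining functor manipulations and Casimir bookkeeping are routine.
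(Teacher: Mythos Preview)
Your proposal is correct and follows essentially the same approach as the paper's proof: identify that $\Theta(s_l)=-\Omega_{l,l+1}$ acts as the sign-permutation on $V^{\otimes i}$, use this to match the underlying spaces (the paper phrases this via the $S_i$-isotypic decomposition $X\otimes V^{\otimes m}\cong\bigoplus_\omega \omega\boxtimes(X\otimes\mathbb S_\omega(V)\otimes V^{\otimes(m-i)})$, which is equivalent to your $\mathrm{Hom}_{S_i}$/$\mathrm{Hom}_K$ exchange), and then check the $\mathbb H_{m-i}$-action by grouping $\sum_{x=0}^{i}\Omega^{\mathrm{old}}_{x,i+k}$ into the single new Casimir $\Omega^{\mathrm{new}}_{0,k}$ and reindexing the remaining terms. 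Your bookkeeping is slightly more explicit than the paper's, but the argument is the same.
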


\begin{proof}
Since $\Theta(w)$ acts by a sign permutation on the $V^{\otimes i}$, we have the decomposition $X \otimes V^{\otimes m}$:
\[   \bigoplus_{\omega \in \mathrm{Irr}(S_i)} \omega \boxtimes (X \otimes \mathbb S_{\omega}(V) \otimes V^{\otimes (m-i)}) .
\]
Here we may regard as a natural $\mathbb C[S_i] \otimes  U(\mathfrak g_0) \otimes \mathbb C[S_{m-i}]$ representation.

Taking the $K$-invariant on the decomposition and then applying $\mathbf{BZ}_{\tau}$, we have:
\[  \mathbf{BZ}_{\tau} \circ \Gamma_{n,m}(X) = (X \otimes \mathbb S_{\tau}(V) \otimes V^{\otimes (m-i)})^K .
\]
In other words, $$\mathbf{BZ}_{\tau} \circ \Gamma_{n,m}(X) \quad \mbox{ and } \quad \Gamma_{n,m-i}\circ T_{\tau}(X)$$ share the same underlying space. It remains to see that the equality above holds for $\mathbb H_{m-i}$-module action. It is quite straightforward to check that it is true for elements of the form $s_l$. For $y_l$, recall that the action from $\mathbf{BZ}_{\tau} \circ \Gamma_{n,m}(X)$ is given by
\[   \sum_{k=0}^{l-1}  \Omega_{kl}  +\frac{n}{2}
\]
The terms $\sum_{k=0}^{i}\Omega_{kl}$ can be grouped together. Using the Lie algebra action on $X\otimes \mathbb S_{\tau}(V)$, the action of such term agrees respectively with the action $\Omega_{0,l-i}$ from $\Gamma_{n,m-i}\circ T_{\tau}(X)$. The actions of the terms $\Omega_{i+1,l}, \ldots, \Omega_{l-1,l}$ then agree with the ones $\Omega_{1,l-i}, \ldots, \Omega_{l-i-1, l-i}$ from $\Gamma_{n,m-i}\circ T_{\tau}(X)$. Thus, the action of $y_l$ coincides on $\mathbf{BZ}_{\tau}\circ \Gamma_{n,m}(X)$ and $\Gamma_{n,m-i}\circ T_{\tau}(X)$. 
\end{proof}

\section{Irreducibility under $\Gamma_{n,m}$} \label{s preserve irr}

We combine the tools of Theorems \ref{thm-std}, \ref{thm preserve hermitian structure} and \ref{thm bz undr arakawa suzuki} to prove that $\Gamma_{n,m}$ preserves the irreducbility in Theorem \ref{thm irreducibility of functor}. 




\subsection{Non-zero image for the thickened case}

\begin{definition} \label{def thickened}
We write $\lambda_L=(\lambda_{L,1}, \ldots, \lambda_{L,n})$ and $\lambda_R=(\lambda_{R,1}, \ldots, \lambda_{R,n})$. We say that the parameter $(\lambda_L, \lambda_R)$ is {\bf thickened} if $\lambda_{L,i}- \lambda_{R,j} \geq 0$  for any $1 \leq i,j \leq n$.
\end{definition}

\begin{lemma} \label{lem non-zero for thickened}
Let $X=J(\lambda_L, \lambda_R)$, where $(\lambda_L, \lambda_R) \in \mathfrak h_0^* \times \mathfrak h_0^*$ is a thickened parameter. Then $\Gamma_{n,m}(X)$ is non-zero, and has a simple quotient isomorphic to $\mathrm{St}(\mathfrak m(\lambda_L, \lambda_R))$.
\end{lemma}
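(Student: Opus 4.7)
The plan is to realize $\Gamma_{n,m}(J(\lambda_L, \lambda_R))$ as a quotient of the standard $\mathbb{H}_m$-module $\lambda(\mathfrak{m}(\lambda_L, \lambda_R))$ and then to verify the quotient is non-zero via a $K$-type matching argument.

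First I would use Theorem \ref{thm-Zh}(b) to reorder the components of $(\lambda_L, \lambda_R)$ so that $\mathrm{Re}(\lambda_{L,1}) \geq \ldots \geq \mathrm{Re}(\lambda_{L,n})$, which does not alter the isomorphism class of $J(\lambda_L, \lambda_R)$. After this reordering Lemma \ref{lem alternate form of std module} gives that $X(\lambda_L, \lambda_R)$ is a standard module, and hence admits $J(\lambda_L, \lambda_R)$ as its unique irreducible quotient. Applying the exact functor $\Gamma_{n,m}$ (with $m = \mathrm{ht}(\lambda_L, \lambda_R)$, which is finite and non-negative by thickening) and invoking Theorem \ref{thm-std} then produces a surjection
\[
\lambda(\mathfrak{m}(\lambda_L, \lambda_R)) \twoheadrightarrow \Gamma_{n,m}(J(\lambda_L, \lambda_R)).
\]

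The core step is to show the right-hand side is non-zero. By Definition \ref{def ps hc}, $J(\lambda_L, \lambda_R)$ contains the irreducible $K$-type $F_{\mu^+}$ whose highest weight $\mu^+$ is the dominant Weyl conjugate of $\mu := \lambda_L - \lambda_R$. The thickening hypothesis gives $\mu_i \geq 0$ for all $i$, so $\mu^+$ is a partition of $m = \sum_i \mu_i$ with at most $n$ parts. On the other side, Section \ref{ss standard module dimension} records $V|_K \cong F_{-e_n}$, and hence $V^*|_K \cong F_{e_1}$, so the classical Schur-Weyl decomposition of $(V^*)^{\otimes m}$ displays $F_{\mu^+}$ as an irreducible $K$-summand. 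Via the Hom-tensor adjunction this gives
\[
(F_{\mu^+} \otimes V^{\otimes m})^K \;\cong\; \mathrm{Hom}_K((V^*)^{\otimes m}, F_{\mu^+}) \;\neq\; 0,
\]
and since $F_{\mu^+}$ is a $K$-subrepresentation of $J(\lambda_L, \lambda_R)$, the left-exactness of $K$-invariants forces
\[
\Gamma_{n,m}(J(\lambda_L, \lambda_R)) \;\supseteq\; (F_{\mu^+} \otimes V^{\otimes m})^K \;\neq\; 0.
\]

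To extract the asserted simple quotient I would use that $\lambda(\mathfrak{m}(\lambda_L, \lambda_R))$ has a unique maximal proper submodule with cosocle $\mathrm{St}(\mathfrak{m}(\lambda_L, \lambda_R))$, so any non-zero quotient of it automatically inherits $\mathrm{St}(\mathfrak{m}(\lambda_L, \lambda_R))$ as its unique simple quotient. The hardest part of the argument will be the non-vanishing step, and specifically the verification that $\mu^+$ qualifies as a partition with at most $n$ parts; this is precisely where the thickening hypothesis enters and conceptually explains why the conclusion can fail without it.
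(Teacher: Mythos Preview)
Your argument is correct and takes a genuinely different, more direct route than the paper. Both proofs begin identically: reorder so that $\mathrm{Re}(\lambda_{L,1})\geq\cdots\geq\mathrm{Re}(\lambda_{L,n})$, apply Lemma \ref{lem alternate form of std module} and Theorem \ref{thm-std} to obtain the surjection $\lambda(\mathfrak m(\lambda_L,\lambda_R))\twoheadrightarrow\Gamma_{n,m}(J(\lambda_L,\lambda_R))$, and then read off the simple quotient from the cosocle of the standard module. The divergence is in the non-vanishing step. The paper argues indirectly in the Grothendieck group: it runs over the whole family $X(\lambda_L,w\lambda_R)$ for $w\in S_n$, uses thickening to guarantee that \emph{every} such principal series has the same height $m$, and then invokes Zelevinsky's intersection--union closure to show that $\{\lambda(\mathfrak m(\lambda_L,w_i\lambda_R))\}_i$ and $\{\mathrm{St}(\mathfrak m(\lambda_L,w_i\lambda_R))\}_i$ span the same space; linear independence then forces every $\Gamma_{n,m}(J(\lambda_L,w_i\lambda_R))$ to be non-zero. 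Your $K$-type/Schur--Weyl argument bypasses this entirely and is the content later packaged as Theorem \ref{thm equality of k type and sm type}.

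One correction to your commentary: your closing claim that thickening is ``precisely'' what makes $\mu^+$ a partition with at most $n$ parts is inaccurate. That conclusion needs only $\mu_i\geq 0$ for all $i$, i.e.\ $\mathrm{ht}(\lambda_L,\lambda_R)\neq -\infty$, which is strictly weaker than thickening. In the paper's proof thickening is genuinely essential (it keeps all the $w$-twists at a common height so the Grothendieck-group count goes through), but in yours it is not. In fact your argument already establishes Proposition \ref{prop non-zero general} directly, which the paper instead deduces from this lemma via repeated application of $\mathbf{BZ}_{\mathrm{triv}_n}$ and Theorem \ref{thm bz undr arakawa suzuki}.
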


\begin{proof}
It is clear that $(\lambda_L, w\lambda_R)$ is also thickened for any $w \in S_n$. Without loss of generality, we assume that 
\[   \mathrm{Re}(\lambda_{L,1}) \geq \ldots \geq \mathrm{Re}(\lambda_{L,n}) 
\]
so that $\Gamma_{n,m}(X(\lambda_L, w\lambda_R))$ is a standard module.

By using the thickenedness, $m = \mathrm{ht}(X(\lambda_L, w\lambda_R))$ is not equal to $-\infty$ and is a constant for all $w \in S_n$. Then it follows from Theorem \ref{thm-std} that $\Gamma_{n,m}(X(\lambda_L, w\lambda_R))$ is non-zero.



We pick $w_1, \ldots, w_k$ to be a choice of elements in $S_n$ such that $J(\lambda_L, w_1\lambda_R), \ldots, J(\lambda_L, w_k\lambda_R)$ are all irreducible composition factors in $X(\lambda_L, \lambda_R)$ without redundancy. By Theorems \ref{thm-Zh}(b), \ref{thm classify irred hecke} and \ref{thm-std}, $\left\{ \Gamma_{n,m}(X(\lambda_L, w_i\lambda_R)) \right\}_i$ is linearly independent in the Grothendieck group of $\mathbb H_m$-modules. Moreover, the multisegments $\left\{ \mathfrak m(\lambda_L, w_i\lambda_R) \right\}_i$ are closed under intersection-union operations on multisegments. In other words, by \cite[Theorem 7.1]{Ze80}, we have that:
\[  \mathrm{span}_{\mathbb C}\left\{ \mathrm{St}(\mathfrak m(\lambda_L, w_i\lambda_R)) \right\}_i =\mathrm{span}_{\mathbb C}\left\{ \lambda(\mathfrak m(\lambda_L, w_i\lambda_R)) \right\}_i ,
\]
where we regard the representations in the Grothendicek group of $\mathcal H_m$ over $\mathbb C$.

Combining above, we must then have:
$ \Gamma_{n,m}(J(\lambda_L, w_i\lambda_R))$ is non-zero for all $i$. By the exactness of $\Gamma_{n,m}$ and Lemma \ref{lem alternate form of std module}, we then have a non-zero surjection from $\Gamma_{n,m}(X(\lambda_L, w_i\lambda_R))$ to $\Gamma_{n,m}(J(\lambda_L, w_i\lambda_R))$. This gives the proposition by Theorem \ref{thm-std}.
\end{proof}

\subsection{Non-zero image for the general case} \label{ss defining thickening character}

We shall use the tool of Bernstein-Zelevinsky derivatives to deduce the general case.

Let $\chi: \mathrm{GL}_n(\mathbb C) \rightarrow \mathbb C^{\times}$ be the character $\chi(g)=\overline{\mathrm{det}(g)}^{-1}$. Let $\mathrm{triv}_k$ be the trivial representation of $S_k$.


\begin{proposition} \label{prop non-zero general}
Suppose $m := \mathrm{ht}(J(\lambda_L, \lambda_R))\neq -\infty$. Then  $\Gamma_{n,m}(J(\lambda_L, \lambda_R))$ is non-zero. Moreover, $\Gamma_{n,m}(J(\lambda_L, \lambda_R))$ has a unique simple quotient isomorphic to $\mathrm{St}(\mathfrak m(\lambda_L, \lambda_R))$.
\end{proposition}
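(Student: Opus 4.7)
The plan is to reduce to the thickened case of Lemma~\ref{lem non-zero for thickened} via a tensoring trick and then transfer the conclusion back using iterated Bernstein--Zelevinsky derivatives from Theorem~\ref{thm bz undr arakawa suzuki}. The argument splits into two threads: (i) obtain the unique simple quotient statement by realising $\Gamma_{n,m}(J(\lambda_L, \lambda_R))$ as a quotient of a standard $\mathbb H_m$-module, and (ii) establish the non-vanishing through the thickening.

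For (i), use the $S_n$-symmetry of Theorem~\ref{thm-Zh} to rearrange $(\lambda_L, \lambda_R)$ so that $\mathrm{Re}(\lambda_L + \lambda_R)$ is dominant; then $X(\lambda_L, \lambda_R)$ is standard and surjects onto $J(\lambda_L, \lambda_R)$. Applying the exact functor $\Gamma_{n,m}$ together with Theorem~\ref{thm-std}, one realises $\Gamma_{n,m}(J(\lambda_L, \lambda_R))$ as a quotient of the standard $\mathbb H_m$-module $\lambda(\mathfrak m(\lambda_L, \lambda_R))$. Since $\lambda(\mathfrak m)$ has $\mathrm{St}(\mathfrak m(\lambda_L, \lambda_R))$ as its unique simple quotient, any non-zero quotient of it inherits the same property (any two non-isomorphic simple quotients of such a quotient would lift to two distinct simple quotients of $\lambda(\mathfrak m)$). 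Hence it suffices to prove $\Gamma_{n,m}(J(\lambda_L, \lambda_R)) \neq 0$.

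For (ii), observe that $J(\lambda_L, \lambda_R) \otimes \chi^j = J(\lambda_L, \lambda_R - (j, \ldots, j))$ is thickened for $j$ sufficiently large, so Lemma~\ref{lem non-zero for thickened} supplies a surjection
\[\Gamma_{n, m+nj}(J(\lambda_L, \lambda_R) \otimes \chi^j) \twoheadrightarrow \mathrm{St}(\mathfrak m(\lambda_L, \lambda_R - (j, \ldots, j))).\]
Since $V$ is the conjugate standard representation, $\mathbb S_{\mathrm{triv}_n}(V) = \bigwedge^n V \cong \chi^{-1}$; iterating Theorem~\ref{thm bz undr arakawa suzuki} with $\tau = \mathrm{triv}_n$ then yields a natural isomorphism
\[\mathbf{BZ}_{\mathrm{triv}_n}^{\circ j} \circ \Gamma_{n, m+nj}(J(\lambda_L, \lambda_R) \otimes \chi^j) \;\cong\; \Gamma_{n, m}(J(\lambda_L, \lambda_R)),\]
where $\mathbf{BZ}_{\mathrm{triv}_n}^{\circ j}$ is the $j$-fold composition. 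Because $\mathbf{BZ}_{\mathrm{triv}_n} = \mathrm{Hom}_{S_n}(\mathrm{triv}, -)$ is exact (as $\mathbb C[S_n]$ is semisimple), the initial surjection transfers to a surjection from $\Gamma_{n, m}(J(\lambda_L, \lambda_R))$ onto $\mathbf{BZ}_{\mathrm{triv}_n}^{\circ j}\bigl(\mathrm{St}(\mathfrak m(\lambda_L, \lambda_R - (j, \ldots, j)))\bigr)$.

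The main obstacle is to verify that this iterate equals $\mathrm{St}(\mathfrak m(\lambda_L, \lambda_R))$ via Proposition~\ref{prop highest derivatives}. Writing $\mu_i = \lambda_{L,i} - \lambda_{R,i}$, at step $t$ ($1 \leq t \leq j$) the input multisegment has exactly $n$ non-empty segments of lengths $\mu_i + (j - t + 1) \geq 1$, matching the hypothesis of Proposition~\ref{prop highest derivatives} with $k = n$; stripping the leftmost element of each yields segments of lengths $\mu_i + (j - t)$, still positive for $t < j$. Only at the final step $t = j$ do those segments with $\mu_i = 0$ become empty and are discarded, producing precisely $\mathfrak m(\lambda_L, \lambda_R)$. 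Hence $\Gamma_{n,m}(J(\lambda_L, \lambda_R))$ admits a non-zero surjection onto $\mathrm{St}(\mathfrak m(\lambda_L, \lambda_R))$, which combined with (i) establishes both assertions of the proposition.
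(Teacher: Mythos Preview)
Your proposal is correct and follows essentially the same route as the paper's proof: thicken by tensoring with a power of $\chi$ to invoke Lemma~\ref{lem non-zero for thickened}, then undo the thickening via iterated $\mathbf{BZ}_{\mathrm{triv}_n}$ using Theorem~\ref{thm bz undr arakawa suzuki} and Proposition~\ref{prop highest derivatives}. Your write-up is in fact more explicit than the paper's in two places: you separate out the ``unique simple quotient'' claim in part~(i) by realising $\Gamma_{n,m}(J(\lambda_L,\lambda_R))$ as a quotient of the standard module $\lambda(\mathfrak m(\lambda_L,\lambda_R))$, and you carefully verify that at each of the $j$ intermediate steps the multisegment has exactly $n$ non-empty segments so that Proposition~\ref{prop highest derivatives} applies with $k=n$.
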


\begin{proof}
Let $\lambda_R'=\lambda_R-(k,\ldots, k)$ for sufficiently large $k$ such that $(\lambda_L, w\lambda_R')$ is thickened for any $w \in S_n$. By Lemma \ref{lem non-zero for thickened}, we have that $\Gamma_{n,m+kn}(J(\lambda_L, \lambda_R')) \neq 0$ and has the simple composition factor $\mathrm{St}(\mathfrak m(\lambda_L, \lambda_R))$.  Then, one applies the derivative $\mathbf{BZ}_{\mathrm{triv_n}}$ $k$-times and the resulting module 
\[   \mathbf{BZ}_{\mathrm{triv}_n}\circ \dots \circ \mathbf{BZ}_{\mathrm{triv}_n}\circ \Gamma_{n,m+kn}(J(\lambda_L, \lambda_R'))
\]
is still non-zero. Note that $\mathbb{S}_{\mathrm{triv}_n}(V) = \wedge^n V=\chi^{-1}$ and $J(\lambda_L, \lambda_R')\otimes \chi^{\otimes (-k)}=J(\lambda_L, \lambda_R)$. 
Now the statement then follows by using $\mathbf{BZ}_{\mathrm{triv}_n}\circ \Gamma_{n,m'+n}(X)=\Gamma_{n,m'}(\chi^{-1} \otimes X)$ several times from Theorem \ref{thm bz undr arakawa suzuki} and Proposition \ref{prop highest derivatives}.
\end{proof}

\subsection{Irreducibility under $\Gamma_{n,m}$}

\begin{theorem} \label{thm irreducibility of functor}
Let $X=J(\lambda_L, \lambda_R)$ be an irreducible Harish-Chandra module of $\mathrm{GL}_n(\mathbb C)$ such that $\mathrm{ht}(X) \neq -\infty$. Let $m=\mathrm{ht}(X)$. Then $\Gamma_{n,m}(X)$ is irreducible, and moreover, $\Gamma_{n,m}(X) \cong \mathrm{St}(\mathfrak m(\lambda_L, \lambda_R))$. 
\end{theorem}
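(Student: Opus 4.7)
Here is my plan. By Proposition \ref{prop non-zero general} we already know that $\Gamma_{n,m}(J(\lambda_L, \lambda_R))$ is non-zero and admits $\mathrm{St}(\mathfrak m(\lambda_L, \lambda_R))$ as its unique simple quotient; the content of the theorem is thus irreducibility. I would proceed in two stages. First, handle the case where $(\lambda_L, \lambda_R)$ is thickened, by combining the uniqueness of the head with a socle estimate extracted from Hermitian duality. Then reduce the general case to the thickened one by iterating the Bernstein--Zelevinsky derivative $\mathbf{BZ}_{\mathrm{triv}_n}$.

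For the thickened case, first note that the Hermitian-dual parameter $(-\overline{\lambda_R}, -\overline{\lambda_L})$ remains thickened, so Proposition \ref{prop non-zero general} applied to $J^h = J(-\overline{\lambda_R}, -\overline{\lambda_L})$ gives $\mathrm{St}(\mathfrak m(-\overline{\lambda_R}, -\overline{\lambda_L}))$ as the unique simple quotient of $\Gamma_{n,m}(J^h)$. Theorem \ref{thm preserve hermitian structure} identifies $\Gamma_{n,m}(J^h)$ with $\Gamma_{n,m}(J)^*$, and dualizing therefore produces a unique simple submodule of $\Gamma_{n,m}(J)$. A separate computation---tracing the anti-involution $y_i^* = -w_0 y_{m+1-i} w_0^{-1}$ through Zelevinsky's standard modules---identifies this submodule with $\mathrm{St}(\mathfrak m(\lambda_L, \lambda_R))$ itself. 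Writing $\pi := \Gamma_{n,m}(J(\lambda_L, \lambda_R))$, both the head and socle of $\pi$ are now equal to $\mathrm{St}(\mathfrak m(\lambda_L, \lambda_R))$, each with multiplicity one. Since $\pi$ is also a quotient of $\Gamma_{n,m}(X(\lambda_L, \lambda_R)) = \lambda(\mathfrak m(\lambda_L, \lambda_R))$ by Theorem \ref{thm-std} and exactness, and $\mathrm{St}(\mathfrak m(\lambda_L, \lambda_R))$ appears in that standard $\mathbb H_m$-module exactly once as a composition factor, the total multiplicity of $\mathrm{St}(\mathfrak m(\lambda_L, \lambda_R))$ in $\pi$ is at most one. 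A short argument---a non-zero module with simple head $S$ and simple socle $S$ in which $S$ has multiplicity one must coincide with $S$---then forces $\pi = \mathrm{St}(\mathfrak m(\lambda_L, \lambda_R))$.

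For the general case, given arbitrary $(\lambda_L, \lambda_R)$ with $\mathrm{ht} \neq -\infty$, choose $M \geq \max_{i,j}(\lambda_{R,j} - \lambda_{L,i})$ so that $(\lambda_L, \lambda_R - (M, \ldots, M))$ is thickened, and set $J_M := \chi^M \otimes J = J(\lambda_L, \lambda_R - (M, \ldots, M))$. The thickened case yields $\Gamma_{n, m+Mn}(J_M) = \mathrm{St}(\mathfrak m(\lambda_L, \lambda_R - (M, \ldots, M)))$. Iterating the relation $\mathbf{BZ}_{\mathrm{triv}_n} \circ \Gamma_{n, m_0 + n}(Y) = \Gamma_{n, m_0}(\chi^{-1} \otimes Y)$, a special case of Theorem \ref{thm bz undr arakawa suzuki}, a total of $M$ times transports the complex side to $\Gamma_{n,m}(J)$; on the Hecke side, $M$ applications of Proposition \ref{prop highest derivatives} peel off the leftmost column of the multisegment at each step, landing at $\mathrm{St}(\mathfrak m(\lambda_L, \lambda_R))$. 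The key check is that each intermediate multisegment $\mathfrak m(\lambda_L, \lambda_R - (i, \ldots, i))$ for $i \geq 1$ still has all $n$ segments non-empty (using $\mu_k \geq 0$ and $i \geq 1$), so that Proposition \ref{prop highest derivatives} applies at every step; only at the very last step may some segments disappear, which matches the possibly shorter $\mathfrak m(\lambda_L, \lambda_R)$.

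The main obstacle is the socle identification in the thickened case, namely confirming that $\mathrm{St}(\mathfrak m(-\overline{\lambda_R}, -\overline{\lambda_L}))^*$ equals $\mathrm{St}(\mathfrak m(\lambda_L, \lambda_R))$ as a specific irreducible module (not merely as a module with the correct central character, which is immediate from the formula for $y_i^*$). Carrying this out requires following Hermitian duality through the product structure $\mathrm{St}(\Delta_1) \times \cdots \times \mathrm{St}(\Delta_k)$ of Zelevinsky's standard modules and their simple quotients. Once this identification is in hand, the thickened case is clean, and the general case becomes mechanical.
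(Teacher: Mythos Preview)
Your argument is correct and uses the same Hermitian-duality idea as the paper, but you take an unnecessary detour. Your ``thickened case'' argument already works verbatim for an arbitrary $(\lambda_L,\lambda_R)$ with $\mathrm{ht}\neq -\infty$: the only place you invoke thickenedness is to ensure Proposition~\ref{prop non-zero general} applies to the Hermitian-dual parameter $(-\overline{\lambda_R},-\overline{\lambda_L})$, but that proposition holds for any parameter of finite height, and since $\lambda_L-\lambda_R$ is integral one has $-\overline{\lambda_R}-(-\overline{\lambda_L})=\overline{\lambda_L-\lambda_R}=\lambda_L-\lambda_R$, so the dual parameter has the same height. Consequently your second stage (reducing the general case to the thickened one via iterated $\mathbf{BZ}_{\mathrm{triv}_n}$) merely re-runs the proof of Proposition~\ref{prop non-zero general} and can be dropped entirely.

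The paper runs exactly your first-stage argument for general parameters, with one cosmetic twist: rather than concluding irreducibility for $\Gamma_{n,m}(J(\lambda_L,\lambda_R))$ directly, it concludes it for $\Gamma_{n,m}(J(-\overline{\lambda_R},-\overline{\lambda_L}))$ and then observes that $(\lambda_L,\lambda_R)$ was arbitrary. Both routes require the identification $\mathrm{St}(\mathfrak m(\lambda_L,\lambda_R))^*\cong \mathrm{St}(\mathfrak m(-\overline{\lambda_R},-\overline{\lambda_L}))$, which you correctly flag as the computation to carry out; the paper simply asserts this isomorphism.
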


\begin{proof}
We assume that $\mathrm{Re}(\lambda_L+\lambda_R)$ is dominant. By Theorem \ref{thm preserve hermitian structure}, we have:
\begin{align}  \label{eqn hermitian dual in irr}
\Gamma_{n,m}(J(-\overline{\lambda_R}, -\overline{\lambda_L})) \cong \Gamma_{n,m}(J(\lambda_L, \lambda_R)^h) \cong \Gamma_{n,m}(J(\lambda_L, \lambda_R))^*
\end{align}

By Proposition \ref{prop non-zero general}, $\Gamma_{n,m}(J(\lambda_L, \lambda_R))$ has a unique simple quotient $\mathrm{St}(\mathfrak m(\lambda_L, \lambda_R))$. Then $\Gamma_{n,m}(J(\lambda_L, \lambda_R))^*$ has a unique simple submodule $\mathrm{St}(\mathfrak m(\lambda_L, \lambda_R))^* \cong  \mathrm{St}(\mathfrak m(-\overline{\lambda_R}, -\overline{\lambda_L}))$. By (\ref{eqn hermitian dual in irr}), $\Gamma_{n,m}(J(-\overline{\lambda_R}, -\overline{\lambda_L}))$ also has such unique simple submodule.

On the other hand, by Theorem \ref{thm-std} and the exactness of $\Gamma_{n,m}$, $\Gamma_{n,m}(J(-\overline{\lambda_R},-\overline{\lambda_L}))$ has a unique simple quotient isomorphic to $\mathrm{St}(\mathfrak m(-\overline{\lambda_R},-\overline{\lambda_L}))$, and no other composition factor isomorphic to that. Thus, combining with the previous paragraph, we then have that 
\[  \Gamma_{n,m}(J(-\overline{\lambda_R}, -\overline{\lambda_L})) \cong \mathrm{St}(\mathfrak m(-\overline{\lambda_R},-\overline{\lambda_L})) .
\]
In particular, it is irreducible. Since $(\lambda_L, \lambda_R)$ is arbitrary, we are then done.
\end{proof}

As a result, we also have the following consequence:

\begin{corollary} \label{cor std implying irreducible}
If $\Gamma_{n,m}(X(\lambda_L, \lambda_R))\neq 0$, then $\Gamma_{n,m}(J(\lambda_L, \lambda_R))\neq 0$. 
\end{corollary}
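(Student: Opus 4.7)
The proof is essentially a direct packaging of two results already proved in the paper. The plan is to combine the explicit dimension formula for $\Gamma_{n,m}$ on principal series with the non-vanishing statement for irreducible modules (Proposition 9.3), using only the definition of height to translate between them.

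First, I would invoke the dimension formula of Section 5.2 (the displayed equation \textit{eqn dimension as functor}): for a principal series, either $\mathrm{ht}(X(\lambda_L,\lambda_R)) = -\infty$ in which case $\Gamma_{n,m}(X(\lambda_L,\lambda_R)) = 0$ for all $m$, or else $\Gamma_{n,m}(X(\lambda_L,\lambda_R))$ is non-zero exactly when $m = \mathrm{ht}(X(\lambda_L,\lambda_R))$. The hypothesis $\Gamma_{n,m}(X(\lambda_L,\lambda_R)) \ne 0$ therefore forces $\mu_i := \lambda_{L,i} - \lambda_{R,i} \geq 0$ for every $i$ and $m = \mathrm{ht}(X(\lambda_L,\lambda_R))$ is a finite non-negative integer.

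Next, by Definition 4.1 the height attached to $J(\lambda_L,\lambda_R)$ depends only on the parameter $(\lambda_L,\lambda_R)$ and agrees with that of $X(\lambda_L,\lambda_R)$, so
\[
\mathrm{ht}(J(\lambda_L,\lambda_R)) = \mathrm{ht}(\lambda_L,\lambda_R) = \mathrm{ht}(X(\lambda_L,\lambda_R)) = m \ne -\infty.
\]
Proposition 9.3 therefore applies with this very $m$ and yields $\Gamma_{n,m}(J(\lambda_L,\lambda_R)) \ne 0$ (and in fact identifies the unique simple quotient as $\mathrm{St}(\mathfrak{m}(\lambda_L,\lambda_R))$, though only the non-vanishing is needed for the corollary).

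There is no real obstacle here: the only thing to verify is the compatibility between the definition of height for principal series and for irreducible Langlands quotients, which is built into Definition 4.1. The entire proof should fit in two or three sentences.
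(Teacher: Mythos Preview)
Your proposal is correct and matches the paper's approach: the paper states the corollary as an immediate consequence of Theorem \ref{thm irreducibility of functor} (whose non-vanishing content is exactly Proposition \ref{prop non-zero general}), and your argument unpacks precisely this by combining the dimension formula (\ref{eqn dimension as functor}) with Proposition \ref{prop non-zero general} via the observation that $\mathrm{ht}$ depends only on the parameter $(\lambda_L,\lambda_R)$.
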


\begin{remark} \label{rmk irred}
\begin{enumerate}
\item The approach for proving irreducibility in \cite{AS98, CT12} is to compare the geometry to obtain coincidence of character formulas. One may also go the other way around. For example, with the results of Theorems \ref{thm-std} and \ref{thm irreducibility of functor} in our setting, and the exactness of $\Gamma_{n,m}$, we obtain the coincidence of character formulas for $J(\lambda_L, \lambda_R)$ and $\Gamma_{n,m}(J(\lambda_L, \lambda_R))$.
\item The analogous statement of Corollary \ref{cor std implying irreducible} does not hold for the original Arakawa-Suzuki functor \cite{AS98} in general.
\item In general, for two irreducible Harish-Chandra modules $X_1$ and $X_2$ of $\mathrm{GL}_n(\mathbb C)$, it is possible that $\Gamma_{n,m}(X_1)\cong \Gamma_{n,m}(X_2)$, but $X_1 \not\cong X_2$. For example, when $n=1$, one may take $X_1=\chi_{r,r}$ and $X_2=\chi_{s,s}$ for $r\neq s$. However, this happens only for those 'degenerate' cases. For example, if both $X_1$ and $X_2$ are thickened with that all those inequalities for $\lambda_{L,i}-\lambda_{R,j}$ in Definition \ref{def thickened} are all strict, then we have $\Gamma_{n,m}(X_1)\cong \Gamma_{n,m}(X_2)$ if and only if $X_1 \cong X_2$. 
\end{enumerate}
\end{remark}

\subsection{Digression: Matching $U(n)$-types and $S_m$-types}


Indeed, the ideas in Section \ref{ss arakawa suzuki bz derivatives} above could also be used to match the $U(n)$-structure of $X$ and the $S_m$-structure of $\Gamma_{n,m}(X)$. Recall that $K=U(n)$. For a $K$-representation $Y$, we denote by $Y^*$ the contragredient $K$-representation of $Y$.

\begin{theorem} \label{thm equality of k type and sm type}
Let $X$ be in $\mathcal{HC}_n$. For any $\tau \in \mathrm{Irr}(S_i)$, we have that:
\[  \mathrm{Hom}_{U(n)}(\mathbb S_{\tau}(V)^*, X|_{U(n)}) \cong \mathrm{Hom}_{S_m}(\tau, \Gamma_{n,m}(X)|_{S_m}) .
\]
\end{theorem}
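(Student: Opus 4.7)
The plan is to reduce the statement to the classical Schur--Weyl duality together with a standard Hom-tensor manipulation. The core observation is that the $S_m$-action on $\Gamma_{n,m}(X)$ coming from the embedding $\mathbb{C}[S_m] \hookrightarrow \mathbb{H}_m$ (via $s_i$) coincides, under $\Lambda\circ\Theta$, with the sign permutation action of $S_m$ on the tensor factor $V^{\otimes m}$. Indeed, a direct computation shows that $\Omega_{i,i+1}$ acts on the $(i,i+1)$-th copies of $V\otimes V$ as the swap operator: for a basis $\{e_a\}$ of $V$, $\sum_{a,b} E_{ab}\otimes E_{ba}(e_c\otimes e_d)=e_d\otimes e_c$. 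Hence $\Theta(s_i)=-\Omega_{i,i+1}$ acts as the negative swap, which is precisely the sign permutation used in defining $\mathbb{S}_{\tau}(V)$.

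With this identification in hand, I would first invoke classical Schur--Weyl duality (with the sign twist) to obtain the $S_m\times \mathrm{GL}_n(\mathbb{C})$-decomposition
\[
 V^{\otimes m}\;\cong\;\bigoplus_{\omega\in\mathrm{Irr}(S_m)} \omega\boxtimes \mathbb{S}_{\omega}(V),
\]
where $\mathrm{GL}_n(\mathbb{C})$ acts via the conjugate standard action on each tensor factor (i.e.\ the $(0,E)$-action used in $\Lambda$). In particular, as $\mathrm{GL}_n(\mathbb{C})$-modules (hence as $K$-modules), $\mathrm{Hom}_{S_m}(\tau,V^{\otimes m})\cong \mathbb{S}_{\tau}(V)$.

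The proof then becomes a chain of natural isomorphisms. Since the $K$-action on $X\otimes V^{\otimes m}$ commutes with the $S_m$-action (the former acts diagonally through the $\mathrm{GL}_n(\mathbb{C})$-structure, the latter permutes tensor slots), I can swap Hom functors:
\begin{align*}
\mathrm{Hom}_{S_m}\bigl(\tau,\Gamma_{n,m}(X)\bigr)
 &= \mathrm{Hom}_{S_m}\bigl(\tau,\mathrm{Hom}_K(\mathrm{triv},X\otimes V^{\otimes m})\bigr)\\
 &\cong \mathrm{Hom}_K\bigl(\mathrm{triv},\mathrm{Hom}_{S_m}(\tau,X\otimes V^{\otimes m})\bigr)\\
 &\cong \mathrm{Hom}_K\bigl(\mathrm{triv},X\otimes \mathrm{Hom}_{S_m}(\tau,V^{\otimes m})\bigr)\\
 &\cong \mathrm{Hom}_K\bigl(\mathrm{triv},X\otimes \mathbb{S}_{\tau}(V)\bigr).
\end{align*}
Since $\mathbb{S}_{\tau}(V)$ is finite-dimensional as a $K$-module, the standard identification $(X\otimes Y)^K\cong \mathrm{Hom}_K(Y^*,X)$ (via $x\otimes y\mapsto (f\mapsto f(y)x)$) yields the final isomorphism with $\mathrm{Hom}_{U(n)}(\mathbb{S}_{\tau}(V)^*,X|_{U(n)})$.

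The only point requiring some care is the very first step: verifying that the $S_m$-action built into the Arakawa--Suzuki realization really is the sign-permutation action on $V^{\otimes m}$, so that Schur--Weyl can be applied cleanly and $\mathbb{S}_{\tau}(V)$ as defined in Section~\ref{ss arakawa suzuki bz derivatives} matches the multiplicity space appearing here. Everything else is formal: the Hom-swap requires only that the two commuting group actions preserve finite-dimensionality in the appropriate slot (true because $V^{\otimes m}$ is finite-dimensional), and the last identification requires finite-dimensionality of $\mathbb{S}_{\tau}(V)$, which holds by Schur--Weyl.
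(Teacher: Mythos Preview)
Your proof is correct and follows essentially the same route as the paper's: the paper's argument compresses your chain of isomorphisms into the single line $\mathrm{Hom}_{S_m}(\tau,\Gamma_{n,m}(X))\cong (X\otimes\mathbb S_{\tau}(V))^K\cong \mathrm{Hom}_K(\mathbb S_{\tau}(V)^*,X)$, referring back to the proof of Theorem~\ref{thm bz undr arakawa suzuki} for the sign-permutation identification and Schur--Weyl decomposition that you spell out explicitly. Your additional care in justifying the $S_m$-action via the swap computation for $\Omega_{i,i+1}$ is exactly what underlies the paper's brief remark that ``$\Theta(w)$ acts by a sign permutation on $V^{\otimes i}$''.
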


\begin{proof}
We have that 
\[    \mathrm{Hom}_{S_m}(\tau, \Gamma_{n,m}(X)) \cong \mathbb (X\otimes \mathbb S_{\tau}(V))^K \cong \mathrm{Hom}_K(X, \mathbb S_{\tau}(V)^*) ,
\]
where the first isomorphism is similar to discussions in the proof of Theorem \ref{thm bz undr arakawa suzuki} and the second one follows from the adjointness between Hom and tensor product.
\end{proof}

\begin{remark}
The highest weight of $V$, as a $K$-representation, is $(0, \ldots, 0, -1)$. The irreducible representation of $S_m$ can be parameterized by partitions of $m$ so that the trivial representation corresponds to $(m)$ and the sign representation corresponds to $(1,\ldots, 1)$. 

For a thickened parameter $(\lambda_L, \lambda_R)$ with integral $\lambda_L-\lambda_R$, write $\lambda_L=(\lambda_{L,1}, \ldots, \lambda_{L,n})$ and $\lambda_R=(\lambda_{R,1}, \ldots, \lambda_{R,n})$. This determines a partition $\alpha=(\lambda_{L,1}-\lambda_{R,1}, \ldots, \lambda_{L,n}-\lambda_{R,n})$, whose transpose determines an irreducible representation, denoted $\tau_{\alpha}$, of $S_m$.

The classical Schur-Weyl duality gives that $\mathbb S_{\tau_{\alpha}}(V)^*$, as a $K$-representation, has the highest weight $\lambda_L-\lambda_R$. On the other hand, Theorems \ref{thm irreducibility of functor} and \ref{thm equality of k type and sm type} give
\[   \mathrm{Hom}_{U(n)}(\mathbb S_{\tau_{\alpha}}(V)^*, J(\lambda_L, \lambda_R)) \cong \mathrm{Hom}_{S_m}(\tau_{\alpha}, \mathrm{St}(\mathfrak m(\lambda_L, \lambda_R))) .
\]
This may be seen as a theoretical explanation on Remark \ref{rmk construction of irreducible from S_m types}. 
\end{remark}



\subsection{Non-unitarity under the functor}  \label{subsec nonunit}
For $d \in \mathbb{Z}$, define the unitary character 
$\mathrm{Sp}_{n,d}: \mathrm{GL}_n(\mathbb C)\rightarrow S^1$ by:
\[ \mathrm{Sp}_{n,d}(g) := \left(\frac{\det(g)}{|\det(g)|}\right)^{d} =\chi_{\frac{d}{2},-\frac{d}{2}}(\mathrm{det}(g)),
\]
where $\chi_{\frac{d}{2}, -\frac{d}{2}}$ is given in Definition \ref{def ps hc}. In terms of Theorem \ref{thm-Zh}, 
\[ \mathrm{Sp}_{n,d} = J((\frac{(n-1)+d}{2},\frac{(n-3)+d}{2},\dots,\frac{-(n-1)+d}{2}),
(\frac{(n-1)-d}{2},\frac{(n-3)-d}{2},\dots,\frac{-(n-1)-d}{2})).
\]

\begin{theorem} \label{thm unittounit}
Let $X$ be a Hermitian Harish-Chandra module for $\mathrm{GL}_n(\mathbb C)$. Then, for any integer $k$, $X\otimes \mathrm{Sp}_{n,k}$ is unitary if and only if $X$ is unitary. Moreover, if $X$ is non-unitary, then for sufficiently large $k$, $\Gamma_{n,m}(X\otimes \mathrm{Sp}_{n,k})$ is non-zero and is a non-$*$-unitary $\mathbb H_m$-module, where $m=\mathrm{ht}(X\otimes \mathrm{Sp}_{n,k})$.
\end{theorem}

\begin{proof}
 Since $X$ is Hermitian, there exists a non-degenerate pairing $\langle , \rangle$ as in Definition \ref{def hermitian harish chandra}. We shall normalize such that restriction to the lowest $K$-type, denoted by $\tau_0$, is positive definite.


 For the first assertion, one simply use the Hermitian forms on $X$ and
 $\mathrm{Sp}_{n,k}$ to define the Hermitian form on $X \otimes \mathrm{Sp}_{n,k}$. We shall denote such Hermitian form on  $X\otimes \mathrm{Sp}_{n,k}$ by $\langle , \rangle_k$. It is straightforward to check $\langle , \rangle_k$ satisfies the conditions in Definition \ref{def hermitian harish chandra}. Since $X\otimes \mathrm{Sp}_{n,k}$ is irreducible, the non-degenerate Hermitian form on $X\otimes \mathrm{Sp}_{n,k}$ is unique (up to a scalar). Moreover, $X\otimes \mathrm{Sp}_{n,k}$ is non-unitary if and only if $X$ is non-unitary, since the signatures of the Hermitian forms on $\langle, \rangle$ and $\langle, \rangle_k$ coincide. 

We now prove the second assertion. Suppose $X$ is non-unitary, then there exists a $K$-type $\tau$ such that the pairing $\langle, \rangle$ restricted to the $\tau$-isotypic component is not positive definite. 

   We fix the Cartan subalgebra $\mathfrak h$ of $\mathfrak k$ in the notation (\ref{identification}) in Section \ref{ss notation cplx gp}, and so $\mathfrak h \cong \mathbb C^n$ and the weight space of $\mathfrak h$ is naturally identified with $\mathbb C^n$. Let the highest weight of $\tau_0$ be $(x_1, \ldots, x_n)$ and the highest weight of $\tau$ is $(y_1, \ldots, y_n)$.

   Now, the highest weight of $\tau_0 \otimes \mathrm{Sp}_{n,k}$ (resp. $\tau\otimes \mathrm{Sp}_{n,k}$) is $(x_1+k, \ldots, x_n+k)$ (resp. $(y_1+k, \ldots, y_n+k)$). Then for sufficiently large $k$, all $x_1+k, \ldots, x_n+k, y_1+k, \ldots, y_n+k$ are positive so that 
   \[   ((\tau_0\otimes \mathrm{Sp}_{n,k})\otimes V^{\otimes m})^K\neq 0
   \]
   and
   \[  ((\tau \otimes \mathrm{Sp}_{n,k})\otimes V^{\otimes m})^K \neq 0 .
   \]
   We shall fix such $k$, and let $\omega_0$ (resp. $\omega$) be the $\tau_0\otimes \mathrm{Sp}_{n,k}$-isotypic component (resp. $\tau \otimes \mathrm{Sp}_{n,k}$) in $X \otimes \mathrm{Sp}_{n,k}$.

By our previous construction, $\langle , \rangle_k$ is positive definite on $\omega_0 \times \omega_0$, but $\langle , \rangle_k$ is negative-definite on some $K$-subspace $\omega'$ of $\omega$. Now it follows from the construction in Theorem \ref{thm preserve hermitian structure} that the induced Hermitian form on $\Gamma_{n,m}(X\otimes \mathrm{Sp}_{n,k})$ is positive-definite on $(\omega_0 \otimes \mathrm{Sp}_{n,k})^K\neq 0$ and is negative definite on $(\omega' \otimes \mathrm{Sp}_{n,k})^K\neq 0$. Hence, $\Gamma_{n,m}(X\otimes \mathrm{Sp}_{n,k})$ is indefinite. By Theorem \ref{thm irreducibility of functor}, $\Gamma_{n,m}(X\otimes \mathrm{Sp}_{n,k})$ is irreducible and so a non-degenerate Hermitian form on an irreducible $\mathbb H_n$-module must be the induced one (up to a scalar). This shows that $\Gamma_{n,m}(X\otimes \mathrm{Sp}_{n,k})$ is non-$*$-unitary, as desired.
\end{proof}

An immediate consequence of the above theorem is the following result on detecting unitarity and non-unitarity of any irreducible, Hermitian $(\mathfrak{g},K)$-modules by its image under $\Gamma_{n,m}$:
\begin{corollary}
    Let $X \in \mathcal{HC}_n$ be an irreducible, Hermitian module. Then $X$ is unitary if and only if for all unitary characters $\mathrm{Sp}_{n,k}$ $(k \in \mathbb{N})$,
    $\Gamma_{n,m}(X \otimes \mathrm{Sp}_{n,k}) \in \mathcal{H}_m$ is $*$-unitary for all $m \in \mathbb{N}$. 
\end{corollary}
\begin{proof}
    If $X$ is unitary, then $X \otimes \mathrm{Sp}_{n,k}$ is unitary for all $k \in \mathbb{N}$. Therefore, $\Gamma_{n,m}(X \otimes \mathrm{Sp}_{n,k})$ is unitary or zero by Corollary \ref{cor unit}. On the other hand, if $X$ is not unitary, then there exists $k, m \in \mathbb{N}$ such that $\Gamma_{n,m}(X \otimes \mathrm{Sp}_{n,k})$ is not unitary by Theorem \ref{thm unittounit}. Consequently, the result follows. 
\end{proof}

\section{Lefschetz principle for Dirac cohomology}  \label{s lefeschetz principle dirac}
In the 1990s, Vogan introduced the notion of \emph{Dirac cohomology} for $(\mathfrak{g},K)$-modules. The relationship between the infinitesimal character of a $(\mathfrak{g},K)$-module and its Dirac cohomology is studied
in \cite{HP} in full detail. Later on, an analogous study of Dirac cohomology for graded Hecke algebra is carried out in \cite{BCT}. In this section, we will use $\Gamma_{n,m}$ to relate various results of Dirac cohomologies on representations of $\mathrm{GL}_n(\mathbb C)$ and $\mathbb H_m$.

\subsection{Finite-dimensional representations versus ladder representations} 

An irreducible $\mathbb H_m$-module $\mathrm{St}(\mathfrak m)$ is said to be a {\bf ladder representation} (in the sense in \cite{LM16}) if the multisegment 
\[  \mathfrak m=\left\{ [a_1, b_1], \ldots, [a_n,b_n] \right\}
\]
satisfies that $a_1>a_2>\ldots >a_n$ and $b_1>b_2>\ldots >b_n$.

One can relate finite dimensional representations of $\mathrm{GL}_n(\mathbb{C})$ (see Appendix \ref{appendix} for details) and ladder representations using $\Gamma_{n,m}$. Namely, by Theorem \ref{thm irreducibility of functor} and the first paragraph of Appendix \ref{appendix}, all ladder representations can be realized as the image of some finite-dimensional representations of $\mathrm{GL}_n(\mathbb{C})$ under $\Gamma_{n,m}$. 

By Theorem \ref{thm-finitedim} in the appendix, all finite-dimensional representations of complex groups with nonzero Dirac cohomology must be of the form
$X = J(\lambda,-w_0\lambda)$ for regular integral $\lambda$. So we are interested in studying Dirac cohomology of the ladder representations $\Gamma_{n,m}(X) = \mathrm{St}(\mathfrak{m}(\lambda,-w_0\lambda))$ with $\mathrm{ht}(X) = m > 0$.
In order to invoke the classification of all ladder representations with nonzero Dirac cohomology given in \cite[Section 7]{Ch18}, we define the {\it content} of a multisegment $\mathfrak{m} = \{[a_1,b_1], \dots, [a_n,b_n]\}$ to be the multiset 
\[\mathrm{con}(\mathfrak{m}) := \{a_1, a_1+1, \dots, b_1, \cdots, a_n, a_n+1, \dots, b_n\}\]
(if $a_i = b_i$, we only count it once in the content). And we say $\mathfrak{m}$ is {\bf twisted-elliptic} if there exists a multisegment of the form $\mathfrak{m}^{temp} =\{[-c_1,c_1],\dots,[-c_m,c_m]\}$ such that
$\mathrm{con}(\mathfrak{m}) = \mathrm{con}(\mathfrak{m}^{temp})$ (c.f. \cite[Corollary 7.3]{Ch18}).

For example, $\mathfrak{m} = \{[3,4],[0,1],[-1,0],[-4,-3]\}$
has content $\{-4,-3,-1,0,0,1,3,4\}$,
and is not twisted-elliptic.
However, $\mathfrak{m}'' = \{[0,7],[-3,4],[-4,3],[-7,0]\}$ is twisted-elliptic, with
\[\mathrm{con}(\mathfrak{m}'') = \mathrm{con}(\{[-7,7],[-4,4],[-3,3],[0,0]\}).\]

\begin{lemma}\label{lem ladder}
    Let $X = J(\lambda,-w_0\lambda)$ be such that $\lambda$ is regular and integral. Then the multisegment $\mathfrak{m}(\lambda,-w_0\lambda)$ is twisted-elliptic if and only if the following holds for $\lambda = (\lambda_1 > \lambda_2 > \dots > \lambda_n)$:
    \begin{center}
        If $0 > \lambda_{i+1} > \dots > \lambda_n$, then there exists $a_1 < \dots < a_{n-i}$ such that $\lambda_{a_j} = -\lambda_{n+1-j}$.
    \end{center}
\end{lemma}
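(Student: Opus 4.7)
My plan is to reduce twisted-ellipticity of $\mathfrak{m}(\lambda, -w_0\lambda)$ to a combinatorial statement about multiplicities in the content, and then translate that back into a condition on the entries of $\lambda$.

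First I would describe $\mathfrak{m}(\lambda, -w_0\lambda)$ explicitly: the $i$-th pair of parameters is $(\lambda_i, -\lambda_{n+1-i})$, so the segments are $\Delta_i = [-\lambda_{n+1-i} + \frac{1}{2}, \lambda_i - \frac{1}{2}]$, non-empty exactly when $\lambda_i + \lambda_{n+1-i} \geq 1$. The key observation is that the index involution $i \leftrightarrow n+1-i$ sends $\Delta_i$ to its reflection about $0$, so $\Delta_i$ and $\Delta_{n+1-i}$ are either both empty or both non-empty and mutually reflected; in particular $\mathrm{con}(\mathfrak{m}(\lambda, -w_0\lambda))$ is a multiset symmetric about $0$.

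Next I would reformulate twisted-ellipticity as follows. Writing $m(v)$ for the multiplicity of $v \in \mathbb{Z} + \frac{1}{2}$ in the content, I claim that a finite multiset in $\mathbb{Z} + \frac{1}{2}$ equals $\mathrm{con}(\{[-c_1, c_1], \ldots, [-c_k, c_k]\})$ for some half-integers $c_j \geq \frac{1}{2}$ if and only if it is symmetric about $0$ and $m(v)$ is non-increasing on $v \in \{\frac{1}{2}, \frac{3}{2}, \ldots\}$. One direction is immediate; the converse is a greedy peeling: let $v_0$ be the largest positive $v$ with $m(v) > 0$; symmetry and non-increasingness let one subtract off one copy of $[-v_0, v_0]$; iterate. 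Combining with the previous step, twisted-ellipticity of $\mathfrak{m}(\lambda, -w_0\lambda)$ reduces to non-increasingness of $m$ on the positive half-line.

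I would then analyze this condition directly. Setting $B_v = |\{i : \lambda_i - \frac{1}{2} = v\}|$ and $A_v = |\{i : -\lambda_{n+1-i} + \frac{1}{2} = v\}|$, one has $m(v) - m(v+1) = B_v - A_{v+1}$, and regularity of $\lambda$ gives $B_v, A_v \in \{0, 1\}$. So non-increasingness becomes the assertion: whenever some $\lambda_j = -v - \frac{1}{2} \leq -1$, there exists some $\lambda_i = v + \frac{1}{2} = -\lambda_j$. This is exactly the condition that each negative entry of $\lambda$ has its negation among the entries of $\lambda$, which is the lemma's condition; enumerating the negatives as $\lambda_n < \ldots < \lambda_{i+1}$, the negations $-\lambda_n > \ldots > -\lambda_{i+1}$ are decreasing positive values that the strictly decreasing sequence $\lambda$ realizes at strictly increasing positions $a_1 < \ldots < a_{n-i}$.

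The main bookkeeping obstacle I foresee is verifying that each witnessing $a_j$ really indexes a non-empty segment of $\mathfrak{m}(\lambda, -w_0\lambda)$, so that it genuinely contributes to $B_v$. This follows from the standing assumption $\mathrm{ht}(X) \neq -\infty$ combined with regularity: from $\lambda_{a_j} = -\lambda_{n+1-j}$ and $\lambda_{a_j} > 0 > \lambda_{n+1-j}$ we have $a_j \neq n+1-j$, so the weak inequality $\lambda_{a_j} + \lambda_{n+1-a_j} \geq 0$ is strict by regularity, and $\Delta_{a_j}$ is non-empty.
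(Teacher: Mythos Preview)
Your approach is correct and takes a genuinely different route from the paper. The paper argues by reduction: assuming the matching condition, it removes the pairs $(\lambda_{a_j},\lambda_{n+1-j})$ from $\lambda$ to obtain a shorter $\lambda^-$, checks that $\mathrm{con}(\mathfrak m(\lambda,-w_0\lambda))=\mathrm{con}(\mathfrak m(\lambda^-,-w_0\lambda^-))$, and thereby reduces to the case where all coordinates of $\lambda$ are non-negative, which is immediate. You instead give an intrinsic characterization of twisted-elliptic contents (symmetric about $0$ with non-increasing multiplicity on the positive half-line) and translate that directly into the entry condition via $m(v)-m(v+1)=B_v-A_{v+1}$. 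Your route is more explicit and handles both directions at once without an inductive reduction; the paper's route is shorter to state but leaves more to the reader.

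One correction: your final paragraph is both unnecessary and not quite right. From $a_j\neq n+1-j$ and regularity you cannot conclude $\lambda_{a_j}+\lambda_{n+1-a_j}>0$; if equality held you would only deduce $\lambda_{n+1-a_j}=\lambda_{n+1-j}$, hence $a_j=j$, which is not excluded. The clean fix is to observe that non-emptiness is never needed for your formula. Under the implicit hypothesis $\mathrm{ht}(X)\neq -\infty$ (required for $\mathfrak m(\lambda,-w_0\lambda)$ to be defined), an index $i$ with $\Delta_i=\emptyset$ satisfies $\lambda_i+\lambda_{n+1-i}=0$, so if $\lambda_i-\tfrac12=v$ then also $-\lambda_{n+1-i}+\tfrac12=v+1$; thus the empty-segment contributions to $B_v$ and $A_{v+1}$ coincide and cancel in $B_v-A_{v+1}$. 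Your multiplicity identity therefore holds with the unrestricted counts, and the bookkeeping obstacle dissolves.
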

\begin{proof}
    This can be checked directly by the definition of twisted-elliptic multisegments. For instance, for the {\it if} part of the lemma, let $\lambda^-$ be obtained by removing all $\lambda_{a_j}$ and $\lambda_{n+1-j}$'s from $\lambda$. Then it can be checked that
    \[\mathrm{con}(\mathfrak{m}(\lambda,-w_0\lambda)) = \mathrm{con}(\mathfrak{m}(\lambda^-,-w_0\lambda^-)),\]
    so that one is reduced to the case when there are no negative coordinates of $\lambda$. In such a case, one can easily prove that the lemma holds. The {\it only if} part can be proved similarly.
\end{proof}

\begin{remark}
    For {\bf all} finite-dimensional representations $X = J(\lambda,-w_0\lambda)$ with nonzero Dirac cohomology, one can always `thicken' $X$ by, for instance, tensoring with a unitary character (see Lemma \ref{lem-speh} below), so that the thickened module $X'' = J(\lambda'',-w_0\lambda'')$ with the $i$-th coordinate $\lambda_i'' \geq 0$ for all $i$, satisfying the hypothesis of the above Lemma.
\end{remark}

\begin{corollary} \label{cor ladder}
Let $X = J(\lambda,-w_0\lambda)$ be a finite-dimensional representation of $\mathrm{GL}_n(\mathbb{C})$ such that $\mathrm{ht}(X) = m > 0$. Then  $\Gamma_{n,m}(X)$ has nonzero Dirac cohomology if and only if the multisegment $\mathfrak{m}(\lambda,-w_0\lambda)$ is twisted-elliptic.

Furthermore, if $\lambda$ is such that $2(\lambda-\rho)$ has nonnegative coordinates, then the $W$-type of $\Gamma_{n,m}(X)$ contributing to its Dirac cohomology is $\tau_{2(\lambda - \rho)}$. 
\end{corollary}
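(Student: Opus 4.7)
The plan is to combine the identification in Theorem \ref{thm irreducibility of functor}, the translation in Lemma \ref{lem ladder}, and the classification of ladder representations with nonzero Dirac cohomology from \cite[Section 7]{Ch18}. By Theorem \ref{thm irreducibility of functor}, $\Gamma_{n,m}(X) \cong \mathrm{St}(\mathfrak m(\lambda, -w_0\lambda))$. Writing $\lambda = (\lambda_1 > \cdots > \lambda_n)$ (regular integral), the segments $\Delta_i = [-\lambda_{n+1-i} + \frac{1}{2},\, \lambda_i - \frac{1}{2}]$ have both endpoint sequences strictly decreasing in $i$, so $\mathfrak m(\lambda, -w_0\lambda)$ is a ladder in the sense of \cite{LM16} and $\Gamma_{n,m}(X)$ is a ladder representation, placing it squarely in the setting of \cite[Section 7]{Ch18}.

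For the first (iff) assertion, I would invoke \cite[Section 7]{Ch18} directly: a ladder representation $\mathrm{St}(\mathfrak m)$ has nonzero Dirac cohomology if and only if $\mathfrak m$ is twisted-elliptic. Specializing to $\mathfrak m = \mathfrak m(\lambda, -w_0\lambda)$ and applying Lemma \ref{lem ladder} to rephrase twisted-ellipticity as the combinatorial condition on $\lambda$ yields the equivalence.

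For the W-type assertion, the hypothesis $2(\lambda-\rho) \geq 0$ guarantees that $\alpha := 2(\lambda-\rho)$ is a partition of $m = \mathrm{ht}(X)$: its coordinates are nonnegative integers, they are (strictly, since $\lambda$ is regular integral) decreasing, and $|\alpha| = 2|\lambda| - 2|\rho| = 2|\lambda| = m$. Hence $\tau_\alpha$ is a well-defined irreducible $S_m$-representation. By the Remark after Theorem \ref{thm equality of k type and sm type}, $\mathbb{S}_{\tau_\alpha}(V)^*$ is the irreducible $U(n)$-module of highest weight $\alpha = 2(\lambda-\rho)$, and Theorem \ref{thm equality of k type and sm type} provides a natural isomorphism
\[
\mathrm{Hom}_{U(n)}\bigl(\mathbb{S}_{\tau_\alpha}(V)^*,\, X|_{U(n)}\bigr) \;\cong\; \mathrm{Hom}_{S_m}\bigl(\tau_\alpha,\, \Gamma_{n,m}(X)|_{S_m}\bigr).
\]
Combining this matching with the known Dirac cohomology of the finite-dimensional complex $X = J(\lambda, -w_0\lambda)$ from \cite{HP, BP, DW} (which pinpoints the $U(n)$-type with highest weight $2(\lambda-\rho)$ as the one carrying $H^D(X)$) and the classification of Dirac-contributing W-types for ladder modules in \cite[Section 7]{Ch18}, one identifies $\tau_{2(\lambda-\rho)}$ as the W-type of $\Gamma_{n,m}(X)$ contributing to its Dirac cohomology.

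The main obstacle will be this last matching. One must check that Dirac cohomology is compatible with $\Gamma_{n,m}$ on the relevant K-type/W-type pair, and reconcile the partition versus transpose conventions built into the definition of $\tau_\alpha$, the Schur functor $\mathbb{S}_\tau$, and the W-type labels in \cite{BCT, Ch18}. A clean fallback that avoids any cross-functor transport of Dirac data is to argue purely on the $\mathbb{H}_m$ side using \cite[Section 7]{Ch18}: read off the Dirac-contributing W-type of $\mathrm{St}(\mathfrak m(\lambda, -w_0\lambda))$ from its twisted-elliptic tempered witness $\mathfrak m^{temp}$, and then verify by elementary partition combinatorics that, under $2(\lambda-\rho)\geq 0$, this W-type equals $\tau_{2(\lambda-\rho)}$.
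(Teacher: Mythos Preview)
Your approach is essentially the same as the paper's. For the first assertion, both you and the paper simply invoke \cite[Theorem 7.8]{Ch18} after identifying $\Gamma_{n,m}(X)$ with the ladder representation $\mathrm{St}(\mathfrak m(\lambda,-w_0\lambda))$ via Theorem \ref{thm irreducibility of functor}; your appeal to Lemma \ref{lem ladder} is unnecessary here since the corollary is stated directly in terms of twisted-ellipticity.

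For the second assertion, your main line is also the paper's: use the Appendix (Theorem \ref{thm-finitedim}) to see that $F_{2(\lambda-\rho)}$ occurs in $X$ with multiplicity one and contributes to $H^D(X)$, then transfer the multiplicity-one occurrence of $\tau_{2(\lambda-\rho)}$ in $\Gamma_{n,m}(X)$ via Theorem \ref{thm equality of k type and sm type}. The step you flag as an obstacle---compatibility of Dirac cohomology across $\Gamma_{n,m}$---is not actually needed, and the paper does not attempt it. Instead, the paper concludes by a direct check on the Hecke algebra side: one verifies that
\[
\dim \mathrm{Hom}_{\widetilde{W}}\bigl(\tau_{\lambda(\mathfrak m^{temp})},\, \tau_{2(\lambda-\rho)}\otimes S\bigr) \neq 0
\]
using \cite[Lemma 7.20]{Ch18}, which certifies that the (already-present, multiplicity-one) $W$-type $\tau_{2(\lambda-\rho)}$ contributes to the Dirac cohomology of $\Gamma_{n,m}(X)$. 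This is precisely your ``fallback'' strategy, so you can drop the functorial-compatibility concern entirely and go straight to this multiplicity check.
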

\begin{proof}
The first statement is immediate from  \cite[Theorem 7.8]{Ch18}. For the second statement, note that the proof of Theorem \ref{thm-finitedim} implies that $F_{2(\lambda-\rho)}$ (appearing with multiplicity one) contributes to the Dirac cohomology of $X$. 
By the Schur-Weyl duality (Theorem \ref{thm equality of k type and sm type}),  $\tau_{2(\lambda - \rho)}$ appears in $\Gamma_{n,m}(X)$ with multiplicity one. Then the result follows from checking that the multiplicity
\[\dim \mathrm{Hom}_{\widetilde{W}}(\tau_{\lambda(\mathfrak{m}^{temp})}, \tau_{2(\lambda - \rho)} \otimes S)\]
is nonzero (c.f. \cite[Lemma 7.20]{Ch18}).
\end{proof}

\begin{example}
    Let $X = J((7,3,-3),(3,-3,-7))$
    be a finite dimensional $\mathrm{GL}_3(\mathbb{C})$-module. Then $\mathfrak{m}((7,3,-3),(3,-3,-7))$ is twisted-elliptic by Lemma \ref{lem ladder}, with 
    \[\mathrm{con}(\mathfrak{m}((7,3,-3),(3,-3,-7))) = \mathrm{con}(\mathfrak{m}((7),(-7))) = \left\{-\frac{13}{2}, -\frac{11}{2}, \dots, \frac{11}{2}, \frac{13}{2}\right\}. \]
    
    By the first paragraph of Corollary \ref{cor ladder}, $\Gamma_{3,14}(X) = \mathrm{St}(\mathfrak{m}((7,3,-3),(3,-3,-7)))$ has nonzero Dirac cohomology - indeed, the character formula of $X$ is given by (see \eqref{eq char form} in the Appendix):
    \begin{align*} 
    \sum_{s \in S_3} \det(s) X((7,3,-3),s(3,-3,-7)) = &\ X((7,3,-3),(3,-3,-7)) - X((7,3,-3),(-3,3,7)) \\
    - &X((7,3,-3),(3,-7,-3)) + X((7,3,-3),(-3,-7,3))\\ 
    + &X((7,3,-3),(-7,3,-3)) - X((7,3,-3),(-7,-3,3)).\end{align*}
Consequently, one can obtain the character formula of $\Gamma_{3,14}(X)$ by applying Theorem \ref{thm-std} on the above terms. 
Indeed, the last summand of the above formula maps to zero by $\Gamma_{3,14}$, while the second last summand yields a tempered module $\mathrm{St}([-\frac{13}{2},\frac{13}{2}])$. Note that it is the only tempered summand in the character formula of $\Gamma_{3,14}(X)$. 
As a result,
\cite[Theorem 6.4]{Ch18} implies that the Dirac index (and hence the Dirac cohomology) of $\Gamma_{3,14}(X)$ is nonzero. 

Since $X$ does not satisfy the second hypothesis of Corollary \ref{cor ladder}, one has to invoke the algorithm in \cite[Section 7.6]{Ch18} to conclude that 
the $W$-type in $\Gamma_{3,14}(X)$ contributing to Dirac cohomology is
$\tau_{(12,1,1)}$, occurring with multiplicity one. On the other hand, the decomposition of $X|_K \cong F_{(6,3,-2)} \otimes F_{(6,3,-2)}$ into $K$-types contains $F_{(12,1,1)}$
with multiplicity one. This matches with the Schur-Weyl duality given in Theorem \ref{thm equality of k type and sm type}.

Finally, if one considers the `thickened' module
$X'' := J((10,6,0),(0,-6,-10)) = X \otimes (\frac{\det}{|\det|})^6$, then $X''$ satisfies both hypotheses of Corollary \ref{cor ladder}, and the $W$-type contributing to the Dirac cohomology of $\Gamma_{3,32}(X'')$ is $\tau_{2(\lambda-\rho)} = \tau_{(18,12,2)}$.
\end{example}

\subsection{Dirac series} \label{ss dirac series}
An interesting part of the
unitary dual is called the \emph{Dirac series}, that is, unitary representations with nonzero Dirac cohomology.
As we see in the previous section, the Arakawa-Suzuki functor $\Gamma_{n,m}$ maps irreducible, unitary representations of $\mathrm{GL}_n(\mathbb C)$ to irreducible, unitary representations $\mathbb{H}_m$. As a special case, one may apply Theorem \ref{thm irreducibility of functor} and get:
\begin{lemma} \label{lem-speh}
Let $\mathrm{Sp}_{n,d}$ be the unitary character in $\mathrm{GL}_n(\mathbb C)$ defined in Section \ref{subsec nonunit}. 
Suppose $d > 0$, so that
$m:= \mathrm{ht}(\mathrm{Sp}_{n,d})= nd > 0$. Then
$\Gamma_{n,m}(\mathrm{Sp}_{n,d}) = a(n,d)$, where 
$$a(n,d) := \mathrm{St}\left(\{[\frac{n-d}{2},\frac{n+d}{2}-1], [\frac{n-d}{2}-1,\frac{n+d}{2}-2], \dots, [-\frac{n+d}{2}+1,-\frac{n-d}{2}]\}\right)$$ 
is the {\bf Speh module}  (here we use the notations in \cite[Section 3.2]{BC14}) .
\end{lemma}

In Type $A$, the classification of Dirac series for $\mathrm{GL}_n(\mathbb C)$
and $\mathbb{H}_m$ are given in \cite{BP}, \cite{DW} and \cite{BC14} respectively. In particular, one has:

\begin{corollary}
Let $X$ be in the Dirac series of $\mathrm{GL}_n(\mathbb C)$
such that $\mathrm{ht}(X) = m >0$. Then $\Gamma_{n,m}(X) \neq 0$ is also in the Dirac series of $\mathbb{H}_m$.
\end{corollary}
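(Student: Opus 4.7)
The plan is to combine the classifications of Dirac series on both sides with the compatibility of $\Gamma_{n,m}$ with parabolic induction and with the Speh building blocks.

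First I would dispense with the easy assertions. The hypothesis $m = \mathrm{ht}(X) \neq -\infty$ together with Theorem \ref{thm irreducibility of functor} already gives $\Gamma_{n,m}(X) \neq 0$ and irreducible, and Theorem \ref{thm preserve hermitian structure} then gives that $\Gamma_{n,m}(X)$ is $*$-unitary. So only nonvanishing of Dirac cohomology remains.

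Next I would appeal to the classifications. By \cite{BP, DW}, every element $X$ in the Dirac series of $\mathrm{GL}_n(\mathbb C)$ is a parabolic induction
\[ X \cong \mathrm{Sp}_{n_1,d_1} \times \cdots \times \mathrm{Sp}_{n_r,d_r} \]
of Speh building blocks whose parameters $(n_i,d_i)$ satisfy an explicit combinatorial matching condition (ensuring both irreducibility/unitarity of the induction and nonvanishing of Dirac cohomology). Applying Corollary \ref{cor preserve parabolic induction} together with Lemma \ref{lem-speh}, and using that $\mathrm{ht}(\mathrm{Sp}_{n_i,d_i}) = n_i d_i$ forces $m_i = n_i d_i$ in the summation (all other summands vanish by Lemma \ref{lem height zero}), I obtain
\[ \Gamma_{n,m}(X) \cong a(n_1,d_1) \times \cdots \times a(n_r,d_r), \]
so that $\Gamma_{n,m}(X)$ is a parabolic induction of Speh $\mathbb H_{m_i}$-modules with the very same parameter data $(n_i,d_i)$.

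Finally I would invoke the classification of the Dirac series of $\mathbb H_m$ in \cite{BC14}, which identifies the Dirac series as precisely those parabolic inductions of Speh modules $a(n_i,d_i)$ whose parameters satisfy an analogous combinatorial matching condition. The main obstacle is this last step: one has to verify that the parameter condition extracted from \cite{BP, DW} on $(n_i,d_i)$ coincides with the one in \cite{BC14}. This is expected because both conditions are controlled by the same underlying infinitesimal character (and by the existence of a compatible tempered module with matching spin lowest $K$-type, respectively $W$-type), and because Lemma \ref{lem-speh} identifies Speh-to-Speh with identical $(n_i,d_i)$; but to be safe one should either check the two explicit conditions directly segment-by-segment, or else transfer the nonvanishing of Dirac cohomology through the character/index formulas as in \cite[Theorem 6.4]{Ch18} using Corollary \ref{cor ladder} and the observation that a product of Speh Dirac-series modules has a tempered summand of a very controlled form whose Dirac index is nonzero.
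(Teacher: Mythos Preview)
Your proposal is correct and follows essentially the same approach as the paper: invoke the classification in \cite{BP, DW} to write $X$ as a product of Speh characters, push through $\Gamma_{n,m}$ via Corollary \ref{cor preserve parabolic induction} and Lemma \ref{lem-speh} to obtain a product of Speh $\mathbb H_m$-modules with the same parameters, and then compare with the $\mathbb H_m$ Dirac series classification in \cite{BC14}. The paper also makes explicit the parity split of the Speh factors into two chains (with $n_i+d_i-1$ even versus odd) coming from \cite{DW}, and it dispatches the step you call the ``main obstacle'' in one line: since $n,d>0$ one has $n+d-1 \geq |n-d|+1 \geq -n+d+1$, so the \cite{DW} inequalities on the $(n_i,d_i)$ immediately imply Equation (3.6.2) of \cite{BC14}; there is no need for the alternative Dirac-index argument you sketch.
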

\begin{proof}
By the description of Dirac series given in the beginning of \cite[Section 2]{DW}, $\pi$ is in the Dirac series if and only if
$$X \cong (\mathrm{Sp}_{n_1,d_1} \times \dots \times \mathrm{Sp}_{n_l,d_l}) \times (\mathrm{Sp}_{n_1',d_1'} \times \dots \times \mathrm{Sp}_{n_t',d_t'}),$$
where $n_i + d_i - 1 \equiv 0 (\mathrm{mod}\ 2)$ for all $1 \leq i \leq l$, 
$n_j' + d_j' - 1 \equiv 1 (\mathrm{mod}\ 2)$ for all $1\leq j \leq t$, and
\begin{align*}
n_1+d_1-1 \geq -n_1+d_1+1 > n_2 + d_2 - 1 \geq -n_2 + d_2 -1 > \dots  > n_l+d_l-1 \geq -n_1+d_1+1; \\
n_1'+d_1'-1 \geq -n_1'+d_1'+1 > n_2' + d_2' - 1 \geq -n_2' + d_2' -1 > \dots  > n_t'+d_t'-1 \geq -n_t'+d_t'+1.
\end{align*}
On the other hand, Lemma \ref{lem-speh} implies that
$$\Gamma_{n,m}(X) = (a(n_1,d_1) \times \dots \times a(n_l,d_l)) \times (a(n_1',d_1') \times \dots \times a(n_t',d_t')).$$
Since $n+d-1 \geq |n-d|+1 \geq -n+d+1$ whenever $n,d > 0$, it is easy to see that the inequalities above satisfy Equation (3.6.2) of \cite{BC14}, and hence $\Gamma_{n,m}(X)$ has nonzero Dirac cohomology.
\end{proof}

It would be beneficial to give a theoretical account of the results in this section without invoking \cite{BC14}, \cite{Ch18} or \cite{DW}. It is of interest to see how the Dirac operator `transforms' upon applying the Arakawa-Suzuki functor. As seen in \cite{BCT}, the generators $y_l$ in Definition \ref{def gaha type a} play the role of differential operators for real groups. Hence, a more general question is to see how the work of \cite{Hu93} and \cite{BKZ09}-- which roughy speaking uses differential operators to  study reducibility for generalized Verma modules -- can be transferred to the $\mathbb H_{m}$-modules.

\section{Applications from Bernstein-Zelevinsky derivatives} \label{s bz derivatives applications}

\subsection{Tensor products on $\mathrm{GL}_n(\mathbb C)$} \label{ss bz derivatives}

In this section, we study $\mathbf{BZ}_{\tau}$ for $\tau=\mathrm{triv}$, the trivial representation of $S_i$ and $\mathrm{sgn}$, the sign representation. We first recall the following multiplicity-free result:

\begin{theorem} \label{thm bz multiplicity one} \cite{CS19, Ch21, Ch22+}
For any $\psi \in \mathrm{Irr}(\mathbb H_m)$ and $\psi' \in \mathrm{Irr}(\mathbb H_{m-i})$,
\[  \mathrm{dim}~ \mathrm{Hom}_{\mathbb H_{m-i}}(\mathbf{BZ}_{\mathrm{triv}}(\psi), \psi') \leq 1 ,
\]
\[  \mathrm{dim}~ \mathrm{Hom}_{\mathbb H_{m-i}}(\mathbf{BZ}_{\mathrm{sgn}}(\psi), \psi') \leq 1 .
\]
\end{theorem}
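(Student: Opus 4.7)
My plan is to reduce both multiplicity statements to a single one via the Iwahori--Matsumoto involution, and then establish the remaining bound by Frobenius reciprocity combined with the classification in Theorem \ref{thm classify irred hecke}. First, I would introduce the Iwahori--Matsumoto involution $\iota \colon \mathbb H_m \to \mathbb H_m$, the algebra automorphism which on the subalgebra $\mathbb C[S_i] \subset \mathbb H_m$ implements the sign twist $s_j \mapsto -s_j$. Pulling back a representation along $\iota$ therefore swaps the trivial and sign representations of $S_i$, and since $\iota^*$ is an auto-equivalence of $\mathcal H_m$ (and of $\mathcal H_{m-i}$) preserving irreducibility, one obtains a natural identification of the form $\mathbf{BZ}_{\mathrm{triv}} \circ \iota^* \cong \iota^* \circ \mathbf{BZ}_{\mathrm{sgn}}$ up to a compatible twist on the target. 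Hence the two statements are equivalent, and I would focus on the sign case.

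Second, for $\tau = \mathrm{sgn}$, I would apply Frobenius reciprocity to the inclusion $\mathbb C[S_i] \otimes \mathbb H_{m-i} \hookrightarrow \mathbb H_m$ to obtain
\[
\mathrm{Hom}_{\mathbb H_{m-i}}\bigl(\mathbf{BZ}_{\mathrm{sgn}}(\psi),\ \psi'\bigr) \cong \mathrm{Hom}_{\mathbb H_m}\bigl(\mathbb H_m \otimes_{\mathbb C[S_i] \otimes \mathbb H_{m-i}} (\mathrm{sgn} \boxtimes \psi'),\ \psi\bigr).
\]
Factoring the induction through the intermediate subalgebra $\mathbb H_i \otimes \mathbb H_{m-i}$ and using the PBW-type decomposition $\mathbb H_i \cong \mathbb C[y_1, \ldots, y_i] \otimes \mathbb C[S_i]$, the induced $\mathbb H_m$-module becomes $\xi \times \psi'$, where $\xi := \mathbb H_i \otimes_{\mathbb C[S_i]} \mathrm{sgn}$ is an $\mathbb H_i$-module admitting a finite filtration by products of $\mathbb H_1$-characters $\chi_{a_1} \times \cdots \times \chi_{a_i}$, indexed by $i$-tuples of scalars up to the natural $S_i$-orbit relation. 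This converts the multiplicity bound into a question about how many parabolically induced modules of $\mathbb H_m$ of a specific shape can have $\psi$ as a quotient.

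The third step is the combinatorial core. For each tuple $(a_1, \ldots, a_i)$, one applies Theorem \ref{thm classify irred hecke} and the intersection-union algorithm of Zelevinsky to compare $\mathfrak m := \mathfrak m(\psi)$ with $\mathfrak m(\psi') \cup \{[a_1], \ldots, [a_i]\}$, and show that at most one $S_i$-orbit of tuples produces a standard module having $\mathrm{St}(\mathfrak m)$ as a composition factor, and in that case with multiplicity exactly one. This matches the classical multiplicity-free property of the Bernstein--Zelevinsky derivative for $p$-adic $\mathrm{GL}$, transferred to $\mathbb H_m$ via Borel--Casselman and Lusztig's reductions. \emph{The main obstacle} is precisely this combinatorial bookkeeping --- one must rule out the accidental appearance of $\psi$ in two distinct induced modules --- and it is the technical heart of \cite{CS19, Ch22+}; it would be carried out either via Zelevinsky's original algorithms or the crystal-theoretic framework developed in those references.
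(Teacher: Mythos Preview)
Your reduction via the Iwahori--Matsumoto involution is correct and matches the paper exactly; the paper writes the involution as $w \mapsto (-1)^{l(w)} w$, $y_j \mapsto -y_j$ and verifies $IM_{m-i} \circ \mathbf{BZ}_{\mathrm{sgn}} \circ IM_m = \mathbf{BZ}_{\mathrm{triv}}$, so that the categorical auto-equivalence transfers the sign bound to the trivial one.

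Your Frobenius reciprocity, however, is applied in the wrong direction. Induction $\mathbb H_m \otimes_{\mathbb C[S_i] \otimes \mathbb H_{m-i}} (-)$ is \emph{left} adjoint to restriction, so the adjunction actually gives
\[
\mathrm{Hom}_{\mathbb H_m}\bigl(\mathbb H_m \otimes_{\mathbb C[S_i] \otimes \mathbb H_{m-i}} (\mathrm{sgn} \boxtimes \psi'),\ \psi\bigr)\ \cong\ \mathrm{Hom}_{\mathbb H_{m-i}}\bigl(\psi',\ \mathbf{BZ}_{\mathrm{sgn}}(\psi)\bigr),
\]
which controls simple \emph{submodules} of $\mathbf{BZ}_{\mathrm{sgn}}(\psi)$, not the simple quotients in the statement. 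Because the extension $\mathbb C[S_i] \otimes \mathbb H_{m-i} \subset \mathbb H_m$ has infinite rank (the polynomial generators $y_1,\ldots,y_i$ are absent from the subalgebra), induction and coinduction do not coincide here, so this is not a harmless swap. Your subsequent composition-factor analysis of $\xi \times \psi'$ would therefore at best bound $\dim\mathrm{Hom}_{\mathbb H_{m-i}}(\psi',\mathbf{BZ}_{\mathrm{sgn}}(\psi))$, not the quantity in the theorem.

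For comparison, the paper does not attempt a self-contained proof of the sign case at all: it cites \cite{CS19, Ch21, Ch22+} directly and remarks that the multiplicity-freeness ultimately comes from Bernstein--Zelevinsky theory combined with the branching-law multiplicity-one theorem of \cite{AGRS10, Ch23}. Since your third step already concedes that the combinatorial core is ``the technical heart of \cite{CS19, Ch22+}'', once the adjunction issue is corrected your proposal effectively reduces to the same citation the paper makes.
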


\begin{proof}
The original version is stated for the sign representation, see \cite[Proposition 2.5]{Ch21} and \cite{Ch22+}. But one can then transfer to the trivial representation by using the Iwahori-Matsumoto dual. More precisely, the Iwahori-Matsumoto involution $IM_k: \mathbb H_k \rightarrow \mathbb H_k$ is given as:
\[    w \mapsto (-1)^{l(w)} w \quad \mbox{ for $w \in S_k$ }, \quad   y_j \mapsto -y_j   \quad \mbox{ for all $j$ } . \]
Then one checks that $IM_{n-i} \circ \mathbf{BZ}_{\mathrm{sgn}} \circ IM_n = \mathbf{BZ}_{\mathrm{triv}}$, where $\mathrm{sgn}$ and $\mathrm{triv}$ are the sign and trivial representations of $S_i$ respectively. Since those $IM$ defines a categorical equivalence, the result then follows from the sign case in \cite{CS19, Ch22+}. The multiplicity freeness comes from the Bernstein-Zelevinsky theory and the multiplicity one of branching law in \cite{AGRS10, Ch23}.
\end{proof}

 Using the functor $\Gamma_{n,m}$, we deduce that:

\begin{corollary} \label{cor tensor hom bound}
 Let $X$ be an irreducible Harish-Chandra module of $\mathrm{GL}_n(\mathbb C)$. Let $S^iV$ and $\wedge^iV$ be the $i$-th symmetric tensor representation and the $i$-th exterior tensor representation of $V$ respectively. Then, for any irreducible Harish-Chandra module $X'$ of $\mathrm{GL}_n(\mathbb C)$,
\[  \mathrm{dim}~ \mathrm{Hom}_{\mathrm{GL}_n(\mathbb C)}(X \otimes \wedge^iV, X') \leq 1
\]
\[  \mathrm{dim}~\mathrm{Hom}_{\mathrm{GL}_n(\mathbb C)}(X \otimes S^iV, X') \leq 1 .
\]
\end{corollary}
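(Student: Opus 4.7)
The strategy is to transfer the statement, via $\Gamma_{n,m}$, to the multiplicity-free bound (Theorem \ref{thm bz multiplicity one}) on the graded Hecke algebra side through the intertwining in Theorem \ref{thm bz undr arakawa suzuki}. First I would identify the tensor factors in Schur-functor terms: under the sign-permutation convention on $V^{\otimes i}$, a direct check gives $\mathbb S_{\mathrm{triv}_i}(V) \cong \wedge^i V$ and $\mathbb S_{\mathrm{sgn}_i}(V) \cong S^i V$, so that $T_{\mathrm{triv}_i}(X) = X \otimes \wedge^i V$ and $T_{\mathrm{sgn}_i}(X) = X \otimes S^i V$. I will treat the exterior case; the symmetric case is identical with $\mathrm{triv}_i$ replaced by $\mathrm{sgn}_i$.

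Next I would apply a thickening reduction. The Hom space is unchanged upon simultaneously tensoring $X$ and $X'$ by any one-dimensional character of $\mathrm{GL}_n(\mathbb C)$. I would choose $\chi := \chi_{a,b} \circ \det$ with $a - b$ a sufficiently large positive integer so that after twisting by $\chi$ both $X$ and $X'$ have height $\neq -\infty$, and the $K$-parameters have all coordinates non-negative. This is possible because tensoring by $\chi$ uniformly shifts the $K$-parameter $\mu = \lambda_L - \lambda_R$ by $(a-b,\ldots,a-b)$ while the $K$-weights of $\wedge^i V$ are bounded. Setting $m := \mathrm{ht}(X' \otimes \chi)$, an inspection of $K$-weights (every weight of $\wedge^i V$ has coordinate-sum $-i$) gives $\mathrm{ht}(X \otimes \chi) = m + i$, since $X' \otimes \chi$ would have to arise as a composition factor of $(X \otimes \chi) \otimes \wedge^i V$.

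For the transfer, set $k := \dim \mathrm{Hom}_{\mathrm{GL}_n(\mathbb C)}(X \otimes \wedge^i V, X')$ and choose $k$ linearly independent morphisms to obtain a surjection $X \otimes \wedge^i V \twoheadrightarrow (X')^{\oplus k}$. Applying the exact functor $\Gamma_{n,m}$ produces a surjection
\[ \Gamma_{n,m}(X \otimes \wedge^i V) \twoheadrightarrow \Gamma_{n,m}(X')^{\oplus k}. \]
By Theorem \ref{thm bz undr arakawa suzuki} the left-hand side equals $\mathbf{BZ}_{\mathrm{triv}_i}(\Gamma_{n,m+i}(X))$, and by Theorem \ref{thm irreducibility of functor} together with the thickening both $\Gamma_{n,m+i}(X)$ and $\Gamma_{n,m}(X')$ are irreducible and nonzero. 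Therefore
\[ k \leq \dim \mathrm{Hom}_{\mathbb H_m}(\mathbf{BZ}_{\mathrm{triv}_i}(\Gamma_{n,m+i}(X)), \Gamma_{n,m}(X')) \leq 1, \]
where the last inequality is Theorem \ref{thm bz multiplicity one}, yielding the desired bound.

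The main obstacle will be executing the thickening carefully: one has to ensure that a single character twist simultaneously renders $\mathrm{ht}(X \otimes \chi)$ and $\mathrm{ht}(X' \otimes \chi)$ finite, with the correct height difference $i$, so that the intertwining identity lands on a non-vanishing Hecke side statement and Theorem \ref{thm irreducibility of functor} can be invoked for both modules. This is elementary bookkeeping on $K$-parameters but is the one non-routine ingredient that lets Theorem \ref{thm bz multiplicity one} apply.
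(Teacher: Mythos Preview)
Your proposal is correct and follows the same route as the paper: twist by a suitable power of a determinant character so that $\Gamma_{n,m}(X')$ is nonzero irreducible, use exactness of $\Gamma_{n,m}$ to push the Hom inequality to the Hecke side, rewrite $\Gamma_{n,m}(X\otimes\wedge^iV)$ as $\mathbf{BZ}_{\mathrm{triv}_i}(\Gamma_{n,m+i}(X))$ via Theorem \ref{thm bz undr arakawa suzuki}, and conclude by Theorem \ref{thm bz multiplicity one}. Your extra bookkeeping on the height relation $\mathrm{ht}(X\otimes\chi)=\mathrm{ht}(X'\otimes\chi)+i$ (valid once the Hom is nonzero, via the central $K$-character) is a helpful clarification; the paper leaves this implicit since if $\Gamma_{n,m+i}(\widetilde X)=0$ the bound is trivially $0$.
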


\begin{proof}
We only explain the statement for $\wedge^iV$ and the one for $S^iV$ is very similar. We shall consider $k$ to be sufficiently large such that $\chi^{\otimes k}\otimes \pi'$ is thickened. Let $\widetilde{X}'=\chi^{\otimes k}\otimes X'$ and let $\widetilde{X}=\chi^{\otimes k} \otimes X$. Then, it suffices to show that
\[ \mathrm{dim}~ \mathrm{Hom}_{\mathrm{GL}_n(\mathbb C)}(\widetilde{X} \otimes \wedge^iV, \widetilde{X}') \leq 1.
\]
By the exactness of $\Gamma_{n,m}$, we have:
\[   \mathrm{dim}~ \mathrm{Hom}_{\mathrm{GL}_n(\mathbb C)}(\widetilde{X} \otimes \wedge^iV, \widetilde{X}')\leq \mathrm{dim}~\mathrm{Hom}_{\mathbb H_{m-i}}(\Gamma_{n,m-i}(\widetilde{X}\otimes \wedge^iV), \Gamma_{n,m-i}(\widetilde{X}')) .
\]
Recall that $\mathbb{S}_{\mathrm{triv}}(V)=\wedge^iV$ by the Schur-Weyl duality. But now, by Theorems \ref{thm bz undr arakawa suzuki} and \ref{thm bz multiplicity one}, the latter one has dimension at most one, as desired.
\end{proof}

We remark that \cite{Ch22+d} also determines (up to applying the Iwahori-Matsumoto involution) when the inequalities in Theorem \ref{thm bz multiplicity one}  are equalities. This in particular gives a list of possible simple quotients for $X \otimes \wedge^iV$ and $X \otimes S^iV$. One may expect that the list should exhaust those simple quotients.

Incorporating the improved multiplicity result in \cite[Theorem 1]{Ch23}, we also have:

\begin{corollary} \label{cor tensor hom bound std}
Let $X$ be a standard module in $\mathcal{HC}_n$. Then for any irreducible Harish-Chandra module $X'$ of $\mathrm{GL}_n(\mathbb C)$, 
\[ \mathrm{dim}~ \mathrm{Hom}_{\mathrm{GL}_n(\mathbb C)}(X \otimes S^iV, X') \leq 1.
\]
\end{corollary}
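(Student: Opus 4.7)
The plan is to repeat the argument of Corollary \ref{cor tensor hom bound} with two adjustments: use $\tau=\mathrm{sgn}$ in place of $\tau=\mathrm{triv}$ in Theorem \ref{thm bz undr arakawa suzuki} (since under the sign-permutation convention $\mathbb S_{\mathrm{sgn}}(V)\cong S^iV$), and replace the multiplicity bound of Theorem \ref{thm bz multiplicity one} by the upgraded statement \cite[Theorem 1]{Ch23}, which asserts that
\[\dim \mathrm{Hom}_{\mathbb H_{m-i}}(\mathbf{BZ}_{\mathrm{sgn}}(\lambda(\mathfrak m)),\, \psi')\leq 1\]
for any standard module $\lambda(\mathfrak m)$ and any irreducible $\psi'$.

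First, I would perform the thickening trick as in the proof of Corollary \ref{cor tensor hom bound}: set $\widetilde X := \chi^{\otimes k}\otimes X$ and $\widetilde X' := \chi^{\otimes k}\otimes X'$ for $k$ large enough that $\widetilde X'$ has a thickened parameter in the sense of Definition \ref{def thickened}. Tensoring with the one-dimensional character $\chi^{\otimes k}$ induces a canonical identification of the relevant Hom spaces, and it preserves standardness of $X$, since shifting every coordinate of $\lambda_R$ by the same scalar does not disturb the dominance of $\mathrm{Re}(\lambda_L+\lambda_R)$. With $m:=\mathrm{ht}(\widetilde X)$, I would then argue that the exact functor $\Gamma_{n,m-i}$ induces an injection on Hom spaces: by Proposition \ref{prop non-zero general} and Theorem \ref{thm irreducibility of functor}, $\Gamma_{n,m-i}(\widetilde X')$ is a nonzero irreducible $\mathbb H_{m-i}$-module (one may reduce to the case $\mathrm{ht}(\widetilde X') = m-i$, since otherwise the source Hom already vanishes by an infinitesimal-character/height match), and any nonzero $\varphi:\widetilde X\otimes S^iV\to \widetilde X'$ is surjective, hence $\Gamma_{n,m-i}(\varphi)$ is a nonzero (surjective) morphism onto $\Gamma_{n,m-i}(\widetilde X')\neq 0$. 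Finally, Theorems \ref{thm bz undr arakawa suzuki} and \ref{thm-std} identify the target Hom space with
\[\mathrm{Hom}_{\mathbb H_{m-i}}\bigl(\mathbf{BZ}_{\mathrm{sgn}}(\lambda(\mathfrak m(\lambda_L,\lambda_R))),\ \Gamma_{n,m-i}(\widetilde X')\bigr),\]
whose dimension is bounded by $1$ via \cite[Theorem 1]{Ch23}.

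The principal obstacle is confirming that \cite[Theorem 1]{Ch23} indeed upgrades the multiplicity-one bound from irreducible inputs (as in Theorem \ref{thm bz multiplicity one}) to arbitrary standard inputs $\lambda(\mathfrak m)$ for the sign BZ derivative; this is the one genuinely new ingredient beyond what is used in the proof of Corollary \ref{cor tensor hom bound}. Once this input is in place, the rest of the argument is a routine transcription with $\wedge^iV$ replaced by $S^iV$ and $\mathrm{triv}$ replaced by $\mathrm{sgn}$ throughout.
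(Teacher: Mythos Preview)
Your proposal is correct and matches the paper's approach exactly. The paper's own proof is a single sentence pointing to \cite[Theorem~1]{Ch23} as the upgraded multiplicity input and otherwise deferring to the argument of Corollary~\ref{cor tensor hom bound}; you have unpacked precisely that argument, correctly swapping $\mathrm{triv}\leadsto\mathrm{sgn}$ (so that $\mathbb S_{\mathrm{sgn}}(V)\cong S^iV$ under the sign-permutation convention), invoking Theorem~\ref{thm-std} to see that $\Gamma_{n,m}(\widetilde X)$ is a standard $\mathbb H_m$-module, and noting that tensoring by $\chi^{\otimes k}$ preserves standardness. One cosmetic point: you should choose $k$ large enough that $\mathrm{ht}(\widetilde X)\neq-\infty$ as well (not only that $\widetilde X'$ is thickened), so that Theorem~\ref{thm-std} applies; this is automatic once stated.
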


\begin{remark}
While Corollaries \ref{cor tensor hom bound} and \ref{cor tensor hom bound std} are stated for our choice of $V$, one can apply the Hermitian involution or the involution from $g\mapsto g^{-t}$ to get to another standard representations as well as their contragredients.
\end{remark}

\begin{remark} \label{rmk tensor product}
We explain an instance how the tensor product in Corollary \ref{cor tensor hom bound} comes into the play of branching laws and this suggests some kind of Lefschetz principle in that direction. Let $\mathcal S(\mathbb C^n)$ and $\mathcal S(\mathbb C^n-0)$ be the spaces of Schwartz functions on $\mathbb C^n$ and $\mathbb C^n-0$ respectively. The branching law for the equal rank Fourier-Jacobi model is to study simple $\mathrm{GL}_n(\mathbb C)$-quotients of 
\[  \pi \hat{\otimes} \mathcal S(\mathbb C^n) 
\]
for some irreducible representation $\pi$ of $\mathrm{GL}_n(\mathbb C)$. Here $\hat{\otimes}$ is the completed projective tensor product. Using Borel's lemma, we have a natural short exact sequence for $\mathrm{GL}_n(\mathbb C)$-representations,
\[   0 \rightarrow \mathcal S(\mathbb C^n-0) \rightarrow \mathcal S(\mathbb C^n) \rightarrow \mathbb C[[z_1, \ldots, z_n, \bar{z}_1, \ldots, \bar{z}_n]] \rightarrow 0
\]
and so $ 0 \rightarrow \mathcal S(\mathbb C^n-0) \hat{\otimes} \pi \rightarrow \mathcal S(\mathbb C^n)\hat{\otimes}\pi \rightarrow \mathbb C[[z_1, \ldots, z_n, \bar{z}_1, \ldots, \bar{z}_n]]\hat{\otimes} \pi \rightarrow 0$.  In particular, the latter tensor product admits a filtration of the form $\pi \otimes S^iV \otimes S^jV'$ (which one may interpret as an analogue of a layer of Bernstein-Zelevinsky filtration in \cite{Ch22+d}) and so we expect \cite{Ch22+, Ch22+d} will be useful in determining those simple quotients. Here $V$ and $V'$ are the conjugate standard and standard representations of $\mathrm{GL}_{n}(\mathbb C)$ respectively.
\end{remark}

\section{Applications on parabolic inductions} \label{s application on parabolic}
\subsection{Reducibility of parabolic inductions}






Recall that $\chi$ is defined in Section \ref{ss defining thickening character}. Determining the irreducibility of parabolic inductions for Harish-Chandra modules can be transferred to the same problem for the graded Hecke algebra side in the following Corollary \ref{cor detect parabolic induction} and the precise way to do so is indicated in its proof.

\begin{corollary} \label{cor detect parabolic induction}
The irreducibility of parabolic induction for the Harish-Chandra category $\mathcal{HC}_n$ can be detected from $\mathcal H_m$ (for some $m$) by using $\Gamma_{n,m}$.
\end{corollary}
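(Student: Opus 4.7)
The plan is to reduce the irreducibility question on the Harish-Chandra side to the corresponding question on the Hecke algebra side by using the functoriality of $\Gamma_{n,m}$ under parabolic induction (Corollary \ref{cor preserve parabolic induction}) together with the preservation of irreducibility (Theorem \ref{thm irreducibility of functor}). The only obstruction is that $\Gamma_{n,m}$ can send modules to zero; this is handled by a thickening step.

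Given irreducible Harish-Chandra modules $X_1, \ldots, X_r$ of $\mathrm{GL}_{n_1}(\mathbb C), \ldots, \mathrm{GL}_{n_r}(\mathbb C)$ with $n = n_1+\ldots+n_r$, first twist each by a sufficiently high power $\chi^{\otimes k}$ of the character introduced in Section \ref{ss defining thickening character}. Since tensoring with a character of $\mathrm{GL}_n(\mathbb C)$ restricts to a product of characters of the Levi $\mathrm{GL}_{n_1}(\mathbb C)\times \ldots \times \mathrm{GL}_{n_r}(\mathbb C)$ and is invertible, this preserves the parabolic induction up to a character twist, hence preserves reducibility/irreducibility. Choosing $k$ large enough ensures each twisted $X_i = J(\lambda_{L,i}, \lambda_{R,i})$ is thickened in the sense of Definition \ref{def thickened}.

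Now set $m_i := \mathrm{ht}(X_i)$ and $m := m_1 + \ldots + m_r$. By Lemma \ref{lem height zero}, for any decomposition $m = m_1' + \ldots + m_r'$ with some $m_i' \neq m_i$ one has $\Gamma_{n,m_i'}(X_i) = 0$. Hence Corollary \ref{cor preserve parabolic induction} collapses to the single summand
\[ \Gamma_{n,m}(X_1 \times \ldots \times X_r) \cong \Gamma_{n,m_1}(X_1) \times \ldots \times \Gamma_{n,m_r}(X_r). \]
By Theorem \ref{thm irreducibility of functor}, each factor $\Gamma_{n,m_i}(X_i)$ is irreducible, so the Hecke side is a (standard) parabolic product of irreducibles, whose irreducibility can be tested by combinatorial criteria (e.g.\ non-linkedness of segments).

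It remains to argue that the irreducibility of $X_1 \times \ldots \times X_r$ is equivalent to irreducibility of the right-hand side. The forward direction is immediate from Theorem \ref{thm irreducibility of functor} and the exactness of $\Gamma_{n,m}$. For the converse, suppose $N$ is a proper non-zero submodule of $X_1 \times \ldots \times X_r$. Every composition factor of $X_1 \times \ldots \times X_r$ shares the same infinitesimal character with it, hence is of the form $J(w_L \lambda_L, w_R \lambda_R)$ for some $w_L,w_R \in S_n$ acting on the concatenated parameter $(\lambda_L, \lambda_R)$. Thickenedness is clearly preserved under such $S_n \times S_n$-actions, and the height $\mathrm{ht}(\lambda_L, \lambda_R) = \sum_i (\lambda_{L,i} - \lambda_{R,i})$ is invariant under permutations of each coordinate. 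Hence every composition factor of $X_1 \times \ldots \times X_r$ has height exactly $m$, so Theorem \ref{thm irreducibility of functor} applied to each composition factor ensures non-vanishing under $\Gamma_{n,m}$. Exactness of $\Gamma_{n,m}$ then yields a proper non-zero submodule of $\Gamma_{n,m}(X_1 \times \ldots \times X_r)$, contradicting irreducibility on the Hecke side. The main technical subtlety is exactly this last observation --- that thickening not only makes $\Gamma_{n,m}$ non-vanishing on the original modules but also on every composition factor of the induced representation --- which is what allows $\Gamma_{n,m}$ to faithfully witness all proper submodules and thus detect reducibility.
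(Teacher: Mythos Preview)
Your approach is the same as the paper's, and the overall strategy is correct, but there is a small gap in the thickening step. You choose $k$ so that each twisted $X_i$ is thickened \emph{as a $\mathrm{GL}_{n_i}(\mathbb C)$-module}, i.e.\ each block satisfies $(\lambda_{L}^{(i)})_a - (\lambda_{R}^{(i)})_b \geq 0$ within its own block. This does \emph{not} imply that the concatenated parameter $(\lambda_L,\lambda_R)$ is thickened for $\mathrm{GL}_n(\mathbb C)$: one needs $\lambda_{L,a} - \lambda_{R,b} \geq 0$ for all $a,b$ across different blocks as well. Without this, your key sentence ``Thickenedness is clearly preserved under such $S_n \times S_n$-actions'' has no thickened object to act on, and a composition factor $J(w_L\lambda_L, w_R\lambda_R)$ could have some coordinate of $w_L\lambda_L - w_R\lambda_R$ negative, hence height $-\infty$, so $\Gamma_{n,m}$ would kill it and fail to witness reducibility.

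The fix is immediate: simply choose $k$ large enough that the \emph{concatenated} parameter is thickened (equivalently, as the paper phrases it, that $\chi^{\otimes k}\otimes Z$ is thickened for every composition factor $Z$ of $X_1\times\cdots\times X_r$). Since twisting by $\chi^{\otimes k}$ subtracts $(k,\ldots,k)$ from $\lambda_R$ uniformly, this is clearly possible. With this adjustment your argument goes through, and in fact your explicit verification that every composition factor is non-killed by $\Gamma_{n,m}$ is a welcome elaboration of what the paper leaves implicit.
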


\begin{proof}
Let $X$ and $Y$ be two irreducible Harish-Chandra modules of $\mathrm{GL}_{n_1}(\mathbb C)$ and $\mathrm{GL}_{n_2}(\mathbb C)$ respectively. We can choose sufficiently large $k$ such that for any composition factor $Z$ in $X \times Y$, $\chi^{\otimes k} \otimes Z$ is thickened. Since $\chi^{\otimes k} \otimes$ defines an equivalence of categories, it is clear that $\chi^{\otimes k}\otimes (X\times Y)$ is irreducible if and only if $X \times Y$ is irreducible. Now, let $m$ be the height of $Z$ for some (and so all) composition factor $Z$ in $\chi^{\otimes k}\otimes (X \times Y)$. Since $\Gamma_{n_1+n_2, m}$ sends any simple composition factor of $\chi^{\otimes k}\otimes (X \times Y)$ to a non-zero irreducible module (Theorem \ref{thm irreducibility of functor}), we then have that $\chi^{\otimes k}\otimes (X\times Y)$ is irreducible if and only if $\Gamma_{n_1+n_2,m}(\chi^{\otimes k} \otimes (X\times Y)) \cong \Gamma_{n_1,m_1}(\chi^{\otimes k}\otimes X)\times \Gamma_{n_2,m_2}(\chi^{\otimes k}\otimes Y)$ is irreducible. Here $m_1=\mathrm{ht}(\chi^{\otimes k} \otimes X)$ and $m_2=\mathrm{ht}(\chi^{\otimes k}\otimes Y)$, and the last isomorphism follows from Theorem \ref{thm isomophic of parabolic induction} and Lemma \ref{lem height zero}.
\end{proof}

For example, Corollary \ref{cor detect parabolic induction} gives another perspective on the classical result that the product of two irreducible unitary representations is still irreducible \cite[Theorem 3.1]{Sa89} (c.f. \cite[Theorem 6.18(b)]{Vo86} and \cite{Bu03}).

Recall in Section \ref{s lefeschetz principle dirac} that, for a finite-dimensional representation $X$ of $\mathrm{GL}_n(\mathbb C)$, $X \cong J(\lambda_L, \lambda_R)$ for some regular dominant $\lambda_L$ and $\lambda_R$ in $\mathfrak h_0^*$. In particular, $\Gamma_{n,m}(X)$ is a ladder representation if it is non-zero.

\begin{corollary} \label{cor parabolic induction preserve}
Let $X_1$ and $X_2$ be irreducible representations in $\mathcal{HC}_{n_1}$ and $\mathcal{HC}_{n_2}$ respectively. Suppose $X_1$ or $X_2$ is finite-dimensional. Then $X_1 \times X_2$ has unique simple quotient and unique simple submodule. 
\end{corollary}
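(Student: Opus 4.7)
The plan is to transport the problem to the graded Hecke algebra side via $\Gamma_{n_1+n_2,m}$, where the analogous statement for parabolic inductions involving ladder representations is available through the work of Lapid--Minguez \cite{LM16}. Without loss of generality assume $X_1$ is finite-dimensional.

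\emph{Thickening step.} For each $i$, let $\chi_{n_i}$ denote the character $g\mapsto \overline{\det(g)}^{-1}$ of $\mathrm{GL}_{n_i}(\bC)$. Replace $X_i$ by $\chi_{n_i}^{\otimes k}\otimes X_i$ for a sufficiently large $k$. Tensoring with a character is an auto-equivalence of $\mathcal{HC}_{n_i}$, it preserves finite-dimensionality, and it preserves the property of having a unique simple quotient/submodule. Moreover, since $\chi_{n_1+n_2}$ restricts on the Levi $\mathrm{GL}_{n_1}(\bC)\times \mathrm{GL}_{n_2}(\bC)$ to $\chi_{n_1}\boxtimes \chi_{n_2}$, the tensor product commutes with real parabolic induction, so the problem is unchanged. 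Choose $k$ large enough so that the parameter $(\lambda_L,\lambda_R)$ of every irreducible composition factor of $X_1\times X_2$ satisfies $\lambda_{L,i}-\lambda_{R,j}>0$ for all $i,j$. This is possible because all composition factors share the infinitesimal character of $X_1\times X_2$, and the strict-thickening condition is invariant under simultaneous $S_{n_1+n_2}$-permutations of $(\lambda_L,\lambda_R)$.

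\emph{Applying the functor.} Set $m_i=\mathrm{ht}(X_i)$ and $m=m_1+m_2$. By Lemma \ref{lem height zero}, all summands in Theorem \ref{thm isomophic of parabolic induction} vanish except for the single term with $m_1'=m_1$ and $m_2'=m_2$, yielding
\[
\Gamma_{n_1+n_2,m}(X_1\times X_2)\ \cong\ \Gamma_{n_1,m_1}(X_1)\times \Gamma_{n_2,m_2}(X_2).
\]
By Theorem \ref{thm irreducibility of functor}, both factors on the right are non-zero irreducible $\mathbb H$-modules, and since $X_1$ is finite-dimensional, $\Gamma_{n_1,m_1}(X_1)$ is a ladder representation (as discussed at the beginning of Section \ref{s lefeschetz principle dirac}).

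\emph{Hecke-algebra input and transfer back.} Invoke the Lapid--Minguez result \cite{LM16}: the parabolic induction of a ladder representation with any irreducible $\mathbb H$-module admits a unique simple quotient $Q$ and a unique simple submodule $S$. Now let $Y$ be any irreducible quotient of $X_1\times X_2$. By exactness of $\Gamma_{n_1+n_2,m}$ one obtains a non-zero surjection $\Gamma_{n_1+n_2,m}(X_1\times X_2)\twoheadrightarrow \Gamma_{n_1+n_2,m}(Y)$. The strict thickening together with Theorem \ref{thm irreducibility of functor} forces $\Gamma_{n_1+n_2,m}(Y)$ to be non-zero and irreducible, hence isomorphic to $Q$. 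By Remark \ref{rmk irred}(3), strictly thickened irreducibles are distinguished by $\Gamma_{n_1+n_2,m}$, so $Y$ is unique. The argument for the unique simple submodule is identical, using $S$ in place of $Q$.

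The main obstacle is arranging the strict thickening on every composition factor of $X_1\times X_2$ simultaneously so that Remark \ref{rmk irred}(3) applies; this reduces to the elementary invariance of the strict-thickening condition under permutations of $(\lambda_L,\lambda_R)$. A secondary point is matching the precise form of the Lapid--Minguez ladder result to the present Hecke-algebra setup, which is their theorem in type $A$ once one applies the standard Iwahori--Matsumoto-type translation already used in the paper (c.f.\ the proof of Theorem \ref{thm bz multiplicity one}).
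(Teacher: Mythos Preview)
Your overall strategy coincides with the paper's: thicken by a power of $\chi$, apply $\Gamma$ to pass to $\mathbb H_m$, invoke the Lapid--M\'inguez ladder result, and pull the conclusion back. However, your transfer-back step contains a genuine gap. From ``every simple quotient $Y$ satisfies $\Gamma_{n_1+n_2,m}(Y)\cong Q$'' together with Remark~\ref{rmk irred}(3) you only deduce that all simple quotients of $X_1\times X_2$ are \emph{mutually isomorphic}. That is strictly weaker than having a unique simple quotient in the intended sense (simple cosocle): for instance $S\oplus S$ with $S$ simple has all simple quotients isomorphic but does not have a unique simple quotient. So the appeal to Remark~\ref{rmk irred}(3) does not finish the argument.

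The paper closes this without Remark~\ref{rmk irred}(3), using only exactness of $\Gamma$ and the fact that it sends every composition factor of the (ordinarily, not strictly) thickened $X_1\times X_2$ to a non-zero irreducible. One way to phrase it: the head $H$ of $X_1\times X_2$ is semisimple, so $\Gamma(H)=\bigoplus_i \Gamma(Y_i)$ with each summand non-zero irreducible; but $\Gamma(H)$ is a semisimple quotient of $\Gamma(X_1\times X_2)$, hence a quotient of its simple head $Q$, forcing exactly one summand and thus $H$ simple. Equivalently, two distinct maximal submodules $M_1\neq M_2$ of $X_1\times X_2$ would give $\Gamma(M_1)+\Gamma(M_2)=\Gamma(X_1\times X_2)$ with both $\Gamma(M_i)$ proper, contradicting simplicity of the head on the Hecke side. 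Your strict-thickening step and the injectivity-on-isomorphism-classes argument are therefore unnecessary for this corollary; they are needed for Corollary~\ref{cor jacquet functor preserve}, not here.
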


\begin{proof}
Again, we find a sufficiently large $k$ such that $\chi^{\otimes k}\otimes Z$ is thickened for any simple composition factor $Z$ in $X_1 \times X_2$. We have that $\chi^{\otimes k}\otimes (X_1 \times X_2)\cong (\chi^{\otimes k}\otimes X_1)\times (\chi^{\otimes k}\otimes X_2)$, and so by Theorem \ref{thm isomophic of parabolic induction}, 
\[  \Gamma_{n,m}(\chi^{\otimes k}\otimes (X_1\times X_2)) \cong \Gamma_{n,m_1}(\chi^{\otimes k}\otimes X_1) \times \Gamma_{n,m_2}(\chi^{\otimes k}\otimes X_2) ,
\]
where $m_1$ and $m_2$ are the heights of $\chi^{\otimes k}\otimes X_1$ and $\chi^{\otimes k}\otimes X_2$ respectively. 

Now $\Gamma_{n,m_2}(\chi^{\otimes k}\otimes X_2)$ is a ladder representation, and so the product has a unique simple quotient \cite{LM16} (with the translation via  \cite{Bo76} and \cite{Lu89}). Since $\Gamma_{n,m}$ is exact and sends all the composition factors in $\chi^{\otimes k}\otimes (X_1\times X_2)$ to a non-zero module, we then must have $\chi^{\otimes k}\otimes (X_1 \times X_2)$ has a unique simple quotient and so does $X_1 \times X_2$. The statement for submodule is the same.
\end{proof}

We also have the following variation of Corollary \ref{cor parabolic induction preserve}:

\begin{corollary} \label{cor jacquet functor preserve}
Let $X$ be an irreducible Harish-Chandra representation in  $\mathcal{HC}_n$. Let $Y$ be an irreducible finite-dimensional representation in $\mathcal{HC}_{n'}$ with $n'<n$. Then there exists at most one irreducible Harish-Chandra module $Z$ of $\mathrm{GL}_{n-n'}(\mathbb C)$ such that $X$ is the unique simple submodule of $ Y \times Z$. The statement also holds if one replaces $Y \times Z$ by $Z \times Y$.
\end{corollary}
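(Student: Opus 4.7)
The plan is to follow the thickening-and-transfer strategy of Corollaries \ref{cor detect parabolic induction} and \ref{cor parabolic induction preserve}, ultimately reducing the claim to an analogous uniqueness statement for ladder representations on the graded Hecke algebra side. Suppose $Z_1$ and $Z_2$ are two irreducible Harish-Chandra modules of $\mathrm{GL}_{n-n'}(\mathbb C)$ for which $X$ is the unique simple submodule of $Y \times Z_i$ ($i=1,2$); we aim to show $Z_1 \cong Z_2$. The condition $X \hookrightarrow Y \times Z_i$ constrains the infinitesimal characters of the $Z_i$ to a finite set determined by those of $X$ and $Y$, so one may choose $k$ sufficiently large that $\chi^{\otimes k}\otimes X$, $\chi^{\otimes k}\otimes Y$ and every candidate $\chi^{\otimes k}\otimes Z_i$ is thickened. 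Since tensoring with $\chi^{\otimes k}$ is an auto-equivalence of $\mathcal{HC}_n$, we may replace $X, Y, Z_i$ by their thickened counterparts.

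Set $m = \mathrm{ht}(X)$ and $m' = \mathrm{ht}(Y)$. Additivity of height under $\times$ forces $\mathrm{ht}(Z_1) = \mathrm{ht}(Z_2) = m - m' =: m''$. By Theorem \ref{thm isomophic of parabolic induction}, Lemma \ref{lem height zero}, the exactness of $\Gamma_{n,m}$, and Theorem \ref{thm irreducibility of functor}, applying $\Gamma_{n,m}$ to the embedding $X \hookrightarrow Y \times Z_i$ yields an embedding of irreducible $\mathbb H_m$-modules
\[
\Gamma_{n,m}(X) \;\hookrightarrow\; \Gamma_{n',m'}(Y) \times \Gamma_{n-n',m''}(Z_i),
\]
in which $\Gamma_{n',m'}(Y)$ is a ladder representation by the discussion in Section \ref{s lefeschetz principle dirac} of finite-dimensional representations. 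Thus the corollary reduces to the following Hecke algebra statement: given a ladder representation $L \in \mathcal{H}_{m'}$ and an irreducible $\pi \in \mathcal{H}_m$, there is at most one irreducible $\sigma \in \mathcal{H}_{m''}$ with $\pi \hookrightarrow L \times \sigma$. I would invoke this via the socle/uniqueness results for ladders due to Lapid-Minguez \cite{LM16} transferred to $\mathbb H_m$ by \cite{Bo76, Lu89}, or alternatively via the Jacquet module arguments of \cite{Ch22+b, Ch22+c} (the condition that $\pi$ be the \emph{unique} simple submodule ensures that the corresponding constituent of the Jacquet module pins down $\sigma$).

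Given this Hecke-algebra input, the two embeddings yield $\Gamma_{n-n',m''}(Z_1) \cong \Gamma_{n-n',m''}(Z_2)$, and since $Z_1, Z_2$ are thickened, Theorem \ref{thm irreducibility of functor} (and the remark following it on thickened parameters) forces $Z_1 \cong Z_2$. The analogous statement with $Z \times Y$ follows by applying the auto-equivalence $\zeta$ of Section \ref{sec-another} (which swaps left and right factors in parabolic inductions) or by a symmetric argument using $\bar{\Gamma}_{n,m}$.

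The main obstacle will be the last step: locating and stating in the correct form the ladder-representation uniqueness result for submodules on the Hecke algebra side. Specifically, one needs the assertion that for a ladder $L$ the functor $\sigma \mapsto \mathrm{soc}(L \times \sigma)$ is injective on its effective domain, which in the $p$-adic world follows from Lapid-Minguez's determination of the socle together with standard Bernstein-Zelevinsky/Jacquet-module reasoning; a secondary technical point, handled as above, is ensuring that a single thickening twist $\chi^{\otimes k}$ works uniformly for all candidate $Z_i$.
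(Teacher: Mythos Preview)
Your argument is correct and follows essentially the same route as the paper's own proof: thicken by a power of $\chi$, transfer the embedding $X\hookrightarrow Y\times Z_i$ to the Hecke side via $\Gamma_{n,m}$ and Theorem \ref{thm isomophic of parabolic induction}, invoke the ladder uniqueness result of \cite{LM16} to conclude $\Gamma_{n-n',m''}(Z_1)\cong\Gamma_{n-n',m''}(Z_2)$, and then use Remark \ref{rmk irred}(3) to recover $Z_1\cong Z_2$. The paper's proof is simply a terser version of exactly this strategy.
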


\begin{proof}
The analogous statement for $\mathbb H_m$-modules holds for the product between a ladder representation and an arbitrary irreducible module. The argument for passing from Harish-Chandra modules to $\mathbb H_m$-modules is similar to the one in Corollary \ref{cor parabolic induction preserve}. Here we have also used Remark \ref{rmk irred}(3) that after some thickening for irreducible $X_1$ and $X_2$, $\Gamma_{n,m}(X_1)\cong \Gamma_{n,m}(X_2)\neq 0$ implies $X_1 \cong X_2$.
\end{proof}

When $Y$ is a character, there exists algorithms (e.g. \cite{LM16}) to compute such $Y$ in Corollary \ref{cor jacquet functor preserve}.

\begin{remark} \label{rmk deduce branching from jacquet}
Here we explain how one can deduce some new explicit quotient branching laws from results of parabolic inductions. Let $X$ be an irreducible Casselman-Wallach representation of $\mathrm{GL}_{n+1}(\mathbb C)$. Given a character $\mu$ of $\mathrm{GL}_1(\mathbb C)$, one can then find, using results of $p$-adic group side (c.f. Corollary \ref{cor jacquet functor preserve}), the Langlands parameter of an irreducible Harish-Chandra module $Y$ of $\mathrm{GL}_n(\mathbb C)$ (if such $Y$ exists) such that 
\[   \mathrm{Hom}_{\mathrm{GL}_{n+1}(\mathbb C)}(X, Y \times \mu) \neq 0 .
\]
Via Frobenius reciprocity, one then has:
\[  \mathrm{Hom}_{P_{n,1}}(X, \delta^{1/2}(Y \boxtimes \mu)) \neq 0 ,
\]
Then restricting to $\mathrm{GL}_n(\mathbb C)$ (viewed as a subgroup of $P_{n,1}$ via the embedding $g \mapsto \begin{pmatrix} g& \\ & 1 \end{pmatrix}$), we obtain a branching law:
\[   \mathrm{Hom}_{\mathrm{GL}_n(\mathbb C)}(X, \delta^{1/2} Y) \neq 0 .
\]
For example, let $X=J((2,2,1), (1,1,0))$. Then $J((2,1),(1,0)) \times J((2),(1))$ has $X$ as a simple submodule and so $\delta^{1/2} Y=J((\frac{5}{2},\frac{3}{2}),(\frac{3}{2},\frac{1}{2}))$ is a simple quotient of $X|_{\mathrm{GL}_2(\mathbb C)}$.


\end{remark}

\begin{remark}
 There are other results of parabolic inductions (or in the form of Jacquet functors) in e.g. \cite{LM16, Ch22+b, Ch22+c, Ch22+d} and references therein. The reader is invited to translate those results and find applications.
\end{remark}

\subsection{Remarks on higher structure} \label{ss remark on higher structures}

As seen in \cite{Ch22+b}, it is useful to determine some higher structure for branching problems. However, for such applications, we need to show that $\Gamma_{n,m}$ preserves some higher structure to transfer the results in \cite{Ch22+b} to the Harish-Chandra category. Another application of preserving higher structure is determining the equalities in Corollary \ref{cor tensor hom bound}. This question for the Arakawa-Suzuki functor \cite{AS98} is studied in \cite{Fu18}, which shows a fully-faithful embedding from the deformed BGG category to the completion of module category of $\mathbb H_m$. We leave our case for future investigation.

\appendix
\section{Dirac cohomology of finite dimensional representations} \label{appendix}
In this section, we study the Dirac cohomology
of finite dimensional representations for all 
complex connected reductive Lie groups, which includes $\mathrm{GL}_n(\mathbb C)$ as a special case. We shall use the notations in \cite[Section 2]{BP}, which in particular coincide with notations for the $\mathrm{GL}_n(\mathbb C)$ case. Firstly, an irreducible Harish-Chandra module  $J(\lambda_L,\lambda_R)$ is finite-dimensional if and only if
$\lambda_L$, $\lambda_R \in \mathfrak{h}_0^*$ are regular and integral. Moreover, its $K$-spectrum is equal to:
\begin{equation} \label{eq-finited}
J(\lambda_L,\lambda_R)|_K = F_{\lambda_L - \rho} \otimes F_{\lambda_R - \rho}^* = F_{\lambda_L - \rho} \otimes F_{-w_0(\lambda_R - \rho)}
\end{equation}
In particular, the PRV component of the above tensor product has extremal weight $(\lambda_L - \rho) + w_0(-w_0(\lambda_R-\rho)) = \lambda_L - \lambda_R$, which is equal to that of the lowest $K$-type of $J(\lambda_L,\lambda_R)$.

The main result of \cite{HP} (reformulated in Equation (2.2) of \cite{BP} for complex groups) implies that all finite dimensional representations of complex groups with nonzero Dirac cohomology must be of the form $X = J(\lambda,-w_0\lambda)$, where $\lambda$ is regular integral and $w_0 \in W$ is the longest element in the Weyl group. 

Now consider the \emph{Dirac operator} 
$$D: X \otimes S \longrightarrow X \otimes S,$$
where $S$ is the spinor module, which is isomorphic to $2^{\lfloor n/2 \rfloor} F_{\rho}$ as a $\widetilde{K}$-module by \cite[Lemma 2.2]{BP}. By Weyl's unitarity trick, there is a inner product $\langle , \rangle_{X}$ on $\pi$ such that $\langle Ev, v' \rangle_{X} = -\langle v, Ev' \rangle_{X}$ for all $E \in \mathfrak{k}_0 + j\mathfrak{s}_0$ and $v, v' \in X$ (note that
this defines a positive definite Hermitian form with respect to the anti-involution given in Remark \ref{rmk-bullet}). 
Consequently, as in Remark 3.2.4 of \cite{HP2}, one can define an inner product on $X \otimes S$ such that $D$
is \emph{skew}-Hermitian with respect to this inner product.

As a result, the Dirac cohomology of $\pi$ is equal to $\ker(D) = \ker(D^2)$,
and the question of finding Dirac cohomology of $\pi$ reduces to studying
$D^2: X \otimes S \to X \otimes S$, where
$X \otimes S \cong 2^{\lfloor n/2 \rfloor} F_{\lambda - \rho} \otimes F_{\lambda - \rho} \otimes F_{\rho}$ as $\widetilde{K}$-modules.

By the formula of $D^2$ 
given in \cite[Proposition 3.1.6]{HP2} for instance, $D^2$
acts on the $\gamma$-isotypic component of $X \otimes S$ by the scalar $-||2\lambda||^2+||\gamma + \rho||^2$. Then it is easy to check that
the only $\gamma$-isotypic component which
$D^2$ acts by zero is when $\gamma = (\lambda - \rho)+(\lambda - \rho) + \rho$,
which implies the following:
\begin{theorem} \label{thm-finitedim}
Let $G$ be any complex connected reductive Lie group. Then all finite dimensional $(\mathfrak{g},K)$-modules with nonzero Dirac cohomology  must be of the form $X = J(\lambda,-w_0\lambda)$ for some regular integral $\lambda$. In particular, its Dirac cohomology is equal to $2^{\lfloor n/2 \rfloor} F_{2\lambda - \rho}$.
\end{theorem}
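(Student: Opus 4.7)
The constraint on the infinitesimal character is already settled: the Huang--Pand\v zi\'c theorem forces $X = J(\lambda, -w_0\lambda)$ for regular integral $\lambda$, as noted in the paragraph preceding the theorem. So my plan is devoted to computing the Dirac cohomology of such an $X$ explicitly, and I would organize the argument around the identity $\ker D = \ker D^2$.

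First, I would record the $\widetilde{K}$-structure of $X\otimes S$. From \eqref{eq-finited}, and using $-w_0(-w_0\lambda - \rho) = \lambda + w_0\rho = \lambda-\rho$, we get $X|_K \cong F_{\lambda-\rho}\otimes F_{\lambda-\rho}$, so
\[
 X\otimes S \;\cong\; 2^{\lfloor n/2\rfloor}\, F_{\lambda-\rho}\otimes F_{\lambda-\rho}\otimes F_{\rho}
\]
as $\widetilde{K}$-modules. Next, the Weyl unitarity trick produces a positive definite inner product on $X$ with respect to which the action of $\mathfrak{k}_0 + j\mathfrak{s}_0$ is skew-Hermitian; tensoring with the natural Hermitian structure on $S$ yields an inner product on $X\otimes S$ making $D$ skew-Hermitian (as in \cite[Remark 3.2.4]{HP2}). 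This gives the crucial identity $\ker D = \ker D^2$, reducing the problem to identifying $\ker D^2$.

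I would then invoke Parthasarathy's formula (cf.\ \cite[Proposition 3.1.6]{HP2}): on the $\gamma$-isotypic component of $X\otimes S$,
\[
 D^2 \;=\; -\|2\lambda\|^2 + \|\gamma + \rho\|^2.
\]
The vanishing condition becomes $\|\gamma+\rho\| = \|2\lambda\|$. Any $\widetilde K$-type $\gamma$ occurring in $F_{\lambda-\rho}\otimes F_{\lambda-\rho}\otimes F_{\rho}$ is dominated by the Cartan highest weight $2\lambda-\rho$, so $\gamma+\rho$ lies in the convex hull of $W\cdot(2\lambda)$; standard convexity (the squared norm is maximized precisely at the extremal vertices) then forces $\gamma+\rho\in W\cdot 2\lambda$, and dominance of $\gamma+\rho$ (for the shifted action) forces $\gamma+\rho = 2\lambda$, i.e.\ $\gamma = 2\lambda-\rho$.

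Finally I would compute the multiplicity of $F_{2\lambda-\rho}$ in $F_{\lambda-\rho}\otimes F_{\lambda-\rho}\otimes F_{\rho}$. Since $2\lambda-\rho$ is the highest weight of the whole tensor product, it occurs with multiplicity exactly one (as the Cartan component, extracted iteratively); combined with the $2^{\lfloor n/2\rfloor}$ copies of $S$, we get $H_D(X) = 2^{\lfloor n/2\rfloor}F_{2\lambda-\rho}$. The main technical point, and the place I would be most careful, is the step ruling out other $\gamma$'s satisfying $\|\gamma+\rho\|=\|2\lambda\|$: the argument above is sound, but one must double-check that every $\gamma$ appearing in the triple tensor product indeed satisfies $\gamma+\rho \preceq 2\lambda$ in the dominance order, which follows from the standard fact that highest weights of constituents of a tensor product are bounded above by the sum of the highest weights.
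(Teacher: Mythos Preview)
Your proposal is correct and follows essentially the same route as the paper: the same reduction to $\ker D = \ker D^2$ via Weyl's unitarity trick, the same identification of $X\otimes S$ as $2^{\lfloor n/2\rfloor} F_{\lambda-\rho}\otimes F_{\lambda-\rho}\otimes F_{\rho}$, and the same use of Parthasarathy's formula to isolate $\gamma = 2\lambda-\rho$. The paper simply asserts the last step as ``easy to check,'' whereas you supply the convexity/dominance argument explicitly; that elaboration is sound and is exactly what the paper has in mind.
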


Finally, we remark that the {\it twisted Dirac index} of $X = J(\lambda,-w_0\lambda)$ can also be obtained easily. Namely, the character formula of $X$ is given by:
\begin{equation} 
\label{eq char form}
\sum_{w \in W} \det(w)X(\lambda,-ww_0\lambda). \end{equation}
By Theorem 5.1 of \cite{BPT}, only the last summand $w = w_0$ in the above formula contributes to the twisted Dirac index of $X$, and is equal to $\pm 2^{\lfloor n/2 \rfloor}F_{2\lambda - \rho}$ (the $\pm$ sign depends on the choice of $\epsilon$ in the theorem). This gives another evidence on the validity of Theorem \ref{thm-finitedim}.


\begin{thebibliography}{B1}
\bibitem[ALTV20]{ALTV20} J. Adams, M. van Leeuwen, P. Trapa, D. Vogan, Unitary representations of real reductive groups, Ast\'erisque, {\bf 417}, 2020.

\bibitem[AGRS10]{AGRS10}  A. Aizenbud, D. Gourevitch, S. Rallis and G. Schiffmann, Multiplicity one theorems, Ann. of Math. (2) {\bf 172} (2010), no. 2, 1407-1434.

\bibitem[AS98]{AS98} T.~Arakawa, T.~Suzuki, Duality between $\frs\frl_n({\bf C})$ and the degenerate affine Hecke algebra, J. Algebra {\bf 209} (1998), no. 1, 288-304.

\bibitem[Ba89]{Ba89} D.~Barbasch, The unitary dual for complex classical Lie groups, Invent. Math. {\bf 96} (1989), no. 1, 103-176.

\bibitem[Ba10]{B10} D.~Barbasch, The unitary spherical spectrum for split classical groups, J. Inst. Math. Jussieu {\bf 9} (2010), no. 2, 265-356.

\bibitem[BC14]{BC14} D.~Barbasch, D.~Ciubotaru, Unitary Hecke algebra modules with nonzero Dirac cohomology, Progr. Math., {\bf 257} (2014), Birkh\"auser/Springer, 1-20

\bibitem[BC15]{BC15} D.~Barbasch, D.~Ciubotaru,   Ladder representations of $\mathrm{GL}(n,\mathbb Q_p)$. In: Nevins, M., Trapa, P. (eds) Representations of Reductive Groups. Progress in Mathematics {\bf 312} (2015) Birkh\"auser, Cham. 

\bibitem[BCT12]{BCT} D.~Barbasch, D.~Ciubotaru, P.~Trapa, Dirac cohomology for graded affine Hecke algebras, Acta
Math. {\bf 202} (2012), no. 2, 197-227.

\bibitem[BP11]{BP}
D.~Barbasch, P. Pand\v zi\'c,
Dirac cohomology and unipotent representations of complex groups,
Noncommutative geometry and global analysis,
Contemp. Math. \textbf{546}, Amer. Math. Soc., Providence, RI, 2011, 1--22.

\bibitem[BPT19]{BPT}
D.~Barbasch, P. Pand\v zi\'c, P.~Trapa,
Dirac index and twisted characters, Trans. Amer. Math. Soc. {\bf 371}, Number 3 (2019) 1701-1733.


\bibitem[BKZ09]{BKZ09} L. Barchini, A. C. Kable, R. Zierau, Conformally invariant systems of differential operators, Adv. Math. {\bf 221} Issue 3 (2009), 788-811. doi.org/10.1016/j.aim.2009.01.006.

\bibitem[Bu03]{Bu03} Baruch, E.M., Proof of Kirillov's conjecture, Ann. of Math. {\bf 158} (2003), 207-252.

\bibitem[BG80]{BG80} J. Bernstein and S. Gelfand, Tensor products of finite and infinite dimensional representations of semisimple Lie algebras, Compositio Math. {\bf 41} (1980), 245-285. 

\bibitem[BK14]{BK14} Bernstein, J., Kr\"otz, B.: Smooth Fr\'echet globalizations of Harish–Chandra modules. Isr. J. Math. 199, 45-111 (2014). https://doi.org/10.1007/s11856-013-0056-1





\bibitem[Bo76]{Bo76} A. Borel, Admissible representations of a semi-simple group over a local field with vectors fixed under an iwahori subgroup. Invent Math {\bf 35} (1976), 233-259. https://doi.org/10.1007/BF01390139


\bibitem[Ca22]{Ca22} K. Calvert, Compact Schur-Weyl duality and the Type B/C VW-algebra, Adv. Math. {\bf 407} (2022), 108575. https://doi.org/10.1016/j.aim.2022.

\bibitem[Cas89]{Cas89}  W. Casselman, Canonical Extensions of Harish-Chandra Modules to Representations of G, Canadian Journal of Mathematics. 1989;41(3):385-438. doi:10.4153/CJM-1989-019-5 


\bibitem[Ch18]{Ch18} K.Y. Chan, 
A vanishing theorem for Dirac cohomology of standard modules. Adv. Math. {\bf 325} (2018), 274-311.

\bibitem[Ch21]{Ch21} K.Y. Chan, Homological branching law for $(\mathrm{GL}_{n+1}(F),\mathrm{GL}_n(F))$: projectivity and indecomposability, Invent. math. {\bf 225} (2021), 299-345. https://doi.org/10.1007/s00222-021-01033-5

\bibitem[Ch23]{Ch23} K.Y. Chan, Ext-multiplicity theorem for standard representations of $(\mathrm{GL}_{n+1}, \mathrm{GL}_n)$, Math. Z. {\bf 303}, 45 (2023). https://doi.org/10.1007/s00209-022-03198-y

\bibitem[Ch22+]{Ch22+} K.Y. Chan, Construction of simple quotients of Bernstein-Zelevinsky derivatives and highest derivative multisegments I: reduction to combinatorics, preprint (2022).

\bibitem[Ch22+b]{Ch22+b} K.Y. Chan, On the product functor on the inner forms of general linear group over a non-Archimedean local field, to appear in Transformation Groups.

\bibitem[Ch22+c]{Ch22+c} K.Y. Chan, Duality for generalized Gan-Gross-Prasad relevant pairs for $p$-adic $\mathrm{GL}_n$, arXiv:2210.17249 (2022)

\bibitem[Ch22+d]{Ch22+d} K.Y. Chan, Quotient branching law for $p$-adic $(\mathrm{GL}_{n+1}, \mathrm{GL}_n)$ I: generalized Gan-Gross-Prasad relevant pairs, arXiv:2212.05919 (2022)

\bibitem[CS19]{CS19} K.Y. Chan and G. Savin,  Bernstein-Zelevinsky derivatives: a Hecke algebra approach, International Mathematics Research Notices, {\bf 3} (2019), 731-760. https://doi.org/10.1093/imrn/rnx138



\bibitem[CT11]{CT11} D. Ciubotaru and P. Trapa, Functors for unitary representations of real classical groups and affine Hecke algebras, Adv. Math. {\bf 227} no. 4 (2011), 1585-1611.

\bibitem[CT12]{CT12} D. Ciubotaru and P. Trapa, Duality for $GL(n, \mathbb{R})$, $GL(n, \mathbb{Q}_p)$, and the degenerate affine Hecke algebra for $gl(n)$, Amer. J. Math. {\bf 134} (2012), 1-30.




\bibitem[DW20]{DW}
C.-P.~Dong and K.D.~Wong,
\emph{Scattered representations of $SL(n,\mathbb C)$}, Pacific J. Math. {\bf 309} (2020), no. 2, 289-312.

\bibitem[Du75]{D} M.~Duflo, Representations irreductibles des groupes semi-simples complexes, Lecture Notes in Math. {\bf 497} (1975), 26-88.



\bibitem[EFM09]{EFM08} P. Etingof, R. Freund, X. Ma, A Lie-theoretic construction of representations of the degenerate affine and double affine Hecke algebras of type BCn, Representation Theory {\bf 13} (2009), 33-49.

\bibitem[Ev96]{Ev96} S. Evens, The Langlands classification for graded Hecke algebras, Proc. Amer. Math. Soc., {\bf 124} (4) (1996),  1285-1290.

\bibitem[Fu18]{Fu18} R. Fujita, Tilting modules of affine quasi-hereditary algebras, Advances in Mathematics, {\bf  324} (2018) 241-266. doi.org/10.1016/j.aim.2017.11.013.



\bibitem[Go70]{Go70} R. Godement, Notes on Jacquet-Langlands' theory, preprint. Princeton (1970)

\bibitem[HP02]{HP}
J.-S. Huang and P. Pand\v{z}i\'{c}, Dirac cohomology, unitary representations and a proof of a conjecture of Vogan, J. Amer. Math. Soc. {\bf 15}(1) (2002), 185-202.

\bibitem[HP06]{HP2} J.-S. Huang, P. Pand\v{z}i\'{c}, Dirac Operators in Representation Theory, Mathematics: Theory and Applications, Birkhauser, 2006.


\bibitem[Hu93]{Hu93} J.-S. Huang,  Intertwining differential operators and reducibility of generalized Verma modules. Math. Ann. {\bf 297} (1993), 309-324. https://doi.org/10.1007/BF01459504





\bibitem[Knp86]{Knp86} A. Knapp, Representation Theory of Real Semisimple Groups: an Overview Based on Examples, Princeton University Press, Princeton, New Jersey, 1986.


\bibitem[LM16]{LM16} E. Lapid, A. M\'inguez, On parabolic induction on inner forms of the general linear group over a non-Archimedean local field,  Sel. Math. New Ser. {\bf 22} (2016), 2347-2400. 

\bibitem[Lu89]{Lu89} G. Lusztig, Affine Hecke algebras and their graded version, J. Amer. Math. Soc. {\bf 2} (1989), 599-685.

\bibitem[Od07]{Od07} H. Oda, Generalization of Harish-Chandra's basic theorem for Riemannian symmetric spaces of non-compact type, Adv. Math. {\bf 208 (2)} (2007) 549-596.


\bibitem[PRV67]{PRV67}  K.R. Parthasarathy, R. Ranga Rao, and V. S. Varadarajan. Representations of Complex Semi-Simple Lie Groups and Lie Algebras, Annals of Mathematics {\bf 85}, no. 3 (1967): 383-429. doi.org/10.2307/1970351.

\bibitem[Ro85]{Ro85} J.D. Rogawski, On modules over the Hecke algebra of a p-adic group, Invent. Math. {\bf 79}, 443-465 (1985). https://doi.org/10.1007/BF01388516

\bibitem[Ro86]{Ro86} J.D. Rogawski, Representations of $GL(n)$ over a p-adic field with an Iwahori-fixed vector, Israel J. Math. {\bf 54} (1986), 242-256.  https://doi.org/10.1007/BF02764944



\bibitem[Sa89]{Sa89} S. Sahi, On Kirillov's conjecture for archimedean fields, Compos. Math. {\bf 72} (1989), 67-86.

\bibitem[Su98]{Su98} T.~Suzuki, Rogawski's conjecture on the Jantzen filtration for the degenerate affine Hecke algebra of type A, Represent. Theory {\bf 2} (1998), 393-409

\bibitem[St67]{St67} E. M. Stein, Analysis in Matrix Spaces and Some New Representations of $SL(N,C)$, Annals of Mathematics, Second Series, {\bf 86}, No. 3 (1967),  461-490.

\bibitem[Ta86]{Ta86} M. Tadi\'c, On the classification of irreducible unitary representations of GL(n) and the conjectures of Bernstein and Zelevinsky, Ann. Sci. \'Ecole Norm. Sup., {\bf 19} (1986), 335-382.

\bibitem[Ta09]{Ta09} M. Tadi\'c, $\mathrm{GL}(n, \mathbb C)^{\wedge}$ and $\mathrm{GL}(n, \mathbb R)^{\wedge}$, Automorphic forms and L-functions II. Local aspects (2009),  285-313.


\bibitem[Tr06]{Tr06} F. Treves, Topological vector spaces, distributions and kernels, Mineola, N.Y.: Dover Publications. 



\bibitem[Vo81]{Vo81} D. Vogan, Representations of real reductive groups, Progr. Math., vol. 15, Birkh\"auser Boston, Boston, MA, 1981.
\bibitem[Vo86]{Vo86} D. Vogan,  The unitary dual of GL(n) over an archimedean field. Invent Math {\bf 83}, 449-505 (1986). https://doi.org/10.1007/BF01394418

\bibitem[Ze80]{Ze80} A. Zelevinsky, Induced representations of reductive p-adic groups II, Ann. Sci. Ecole Norm. Sup. {\bf 13} (1980), 154-210.



\bibitem[Ze85]{Ze85} A. Zelevinsky, Two remarks on graded nilpotent classes, Uspehi Mat. Nauk, 40 (1985), no. 1 (241), 199-200.



\bibitem[Zh74]{Zh} D.~P.~Zhelobenko, Harmonic analysis on complex semisimple Lie groups, Mir, Moscow, 1974.
\end{thebibliography}
\end{document}